\definecolor{gr}{rgb}   {0.,   0.69,   0.23 }
\definecolor{bl}{rgb}   {0.,   0.5,   1. }
\definecolor{mg}{rgb}   {0.85,  0.,    0.85}
\definecolor{yl}{rgb}   {0.8,  0.7,   0.}
\definecolor{or}{rgb}  {0.7,0.2,0.2}
\newtheorem{theorem}{Theorem} [section]
\newtheorem{lemma}[theorem]{Lemma}
\newtheorem{proposition}[theorem]{Proposition}
\newtheorem{remark}[theorem]{Remark}
\newcommand{\noi}{\noindent}
\newcommand{\Z}{\mathbb{Z}}
\newcommand{\R}{\mathbb{R}}
\newcommand{\C}{\mathbb{C}}
\newcommand{\T}{\mathbb{T}}
\let\Re=\undefined\DeclareMathOperator*{\Re}{Re}
\let\Im=\undefined\DeclareMathOperator*{\Im}{Im}
\let\P= \undefined
\newcommand{\P}{\mathbf{P}}
\newcommand{\E}{\mathbb{E}}
\renewcommand{\L}{\mathcal{L}}
\newcommand{\D}{\mathcal{D}}
\newcommand{\F}{\mathcal{F}}
\newcommand{\al}{\alpha}
\newcommand{\be}{\beta}
\newcommand{\dl}{\delta}
\newcommand{\nb}{\nabla}
\newcommand{\Dl}{\Delta}
\newcommand{\eps}{\varepsilon}
\newcommand{\kk}{\kappa}
\newcommand{\g}{\gamma}
\newcommand{\G}{\Gamma}
\newcommand{\ld}{\lambda}
\newcommand{\s}{\sigma}
\newcommand{\ft}{\widehat}
\newcommand{\wt}{\widetilde}
\newcommand{\cj}{\overline}
\newcommand{\dt}{\partial_t}
\newcommand{\dd}{\partial}
\renewcommand{\l}{\ell}
\renewcommand{\o}{\omega}
\renewcommand{\O}{\Omega}
\newcommand{\les}{\lesssim}
\newcommand{\ges}{\gtrsim}
\newcommand{\jb}[1]
{\langle #1 \rangle}
\newcommand{\ind}{\mathbf 1}
\newcommand{\N}{\mathbb{N}}
\newcommand{\NN}{\mathcal{N}}
\newcommand{\EE}{\mathcal{E}}
\renewcommand{\H}{\mathcal{H}}
\newcommand{\QQ}{\mathcal{Q}}
\newtheorem*{ackno}{Acknowledgments}
\numberwithin{equation}{section}
\numberwithin{theorem}{section}
\newcommand{\lin}{\textup{lin}}
\tikzset{
	dot/.style={circle,fill=black,draw=black,inner sep=0pt,minimum size=0.5mm},
	>=stealth,
	}
\tikzset{
	dot2/.style={circle,fill=black,draw=black,inner sep=0pt,minimum size=0.2mm},
	>=stealth,
	}
\tikzset{
	ddot/.style={circle,fill=white,draw=black,inner sep=0pt,minimum size=0.8mm},
	>=stealth,
	}
\tikzset{decision/.style={ 
        draw,
        diamond,
        aspect=1.5
    }}
\tikzset{dia2/.style
={diamond,fill=white,draw=black,inner sep=0pt,minimum size=1mm},
	>=stealth,
	}
\tikzset{dia/.style
={star,fill=black,draw=black,inner sep=0pt,minimum size=1mm},
	>=stealth,
	}
\tikzset{dia/.style
={diamond,fill=black,draw=black,inner sep=0pt,minimum size=1.3mm},
	>=stealth,
	}
\def\<#1>{\xusebox{#1}}
\newcommand{\too}{\longrightarrow}
\begin{document}
\baselineskip = 14pt

\title[A remark on triviality for 2d SNLW]
{A remark on triviality for the two-dimensional stochastic nonlinear wave equation}

\author[T. Oh]{Tadahiro Oh}
\address{
Tadahiro Oh\\
School of Mathematics\\
The University of Edinburgh\\
and The Maxwell Institute for the Mathematical Sciences\\
James Clerk Maxwell Building\\
The King's Buildings\\
 Peter Guthrie Tait Road\\
Edinburgh\\ 
EH9 3FD\\United Kingdom} 

\email{hiro.oh@ed.ac.uk}

\author[M. Okamoto]{Mamoru Okamoto}
\address{Mamoru Okamoto\\
Division of Mathematics and Physics, Faculty of Engineering, Shinshu University, 4-17-1 Wakasato, Nagano City 380-8553, Japan}
\email{m\_okamoto@shinshu-u.ac.jp}

\author[T. Robert]{Tristan Robert}
\address{
Tristan Robert\\
School of Mathematics\\
The University of Edinburgh\\
and The Maxwell Institute for the Mathematical Sciences\\
James Clerk Maxwell Building\\
The King's Buildings\\
 Peter Guthrie Tait Road\\
Edinburgh\\ 
EH9 3FD\\United Kingdom} 

 \curraddr{
Fakult\"at f\"ur Mathematik\\
Universit\"at Bielefeld\\
Postfach 10 01 31\\
33501 Bielefeld\\
Germany}

\email{trobert@math.uni-bielefeld.de}

\subjclass[2010]{35L71, 60H15}
\keywords{nonlinear wave equation;  stochastic nonlinear wave equation; renormalization; triviality}
\begin{abstract}
We consider  the two-dimensional stochastic damped nonlinear wave equation (SdNLW) 
with the cubic nonlinearity, 
forced by a space-time white noise. 
In particular, we investigate the limiting behavior of solutions
to SdNLW with regularized  noises
and establish triviality results
in the spirit of the work by Hairer, Ryser, and Weber (2012).
More precisely, without renormalization of the nonlinearity, 
we establish the following two limiting behaviors;
(i) in the strong noise regime, 
we show that  solutions to SdNLW
with regularized noises  tend  to $0$ as the regularization is removed
and (ii) in the weak noise regime, 
we  show that  solutions to SdNLW with regularized noises
converge to a solution to a deterministic damped nonlinear wave equation 
with  an additional mass term.
\end{abstract}

%
\maketitle
%


\section{Introduction}\label{SEC:1}
\subsection{Stochastic damped nonlinear wave equation, 
renormalization, and triviality} 

We consider the Cauchy problem for the following stochastic damped nonlinear wave equation (SdNLW) 
with the cubic nonlinearity, posed 
on the two-dimensional torus $\T^2 = (\R/2\pi \Z)^2$:
\begin{equation}
\label{SdNLW}
\begin{cases}
 \dt^2 u - \Dl  u + \dt u + u^3 = \al \xi \\
 (u,\dt u)|_{t = 0} = (u_0,u_1)
\end{cases}
\qquad (t, x) \in \R_+\times \T^2,
\end{equation}

\noi
where $\al \in \R$  and $\xi (t,x)$ denotes a space-time white noise on $\R_+ \times \T^2$.
The damped wave equation (without a stochastic forcing)
appears as a model describing  wave propagation with friction.
It also appears 
as  a modified heat conduction equation with the finite propagation speed property
\cite{Cat} 
and as  stochastic models such as correlated random walk~\cite{Kac}.
See~\cite{IIW}
for further references.
In the deterministic case, the equation \eqref{SdNLW} has been studied
extensively; see
\cite{HKN, IIW, IIOW} and the references therein.

The stochastic nonlinear wave equations (SNLW)
 have been studied extensively
in various settings; 
see \cite[Chapter 13]{DPZ14} for the references therein.
In recent years, we have witnessed
a rapid progress on the theoretical understanding 
of SNLW with singular stochastic forcing.
 In~\cite{GKO},  Gubinelli, Koch, 
and  the first author
considered SNLW with an additive space-time white noise on $\T^2$:
\begin{equation}
\label{SNLW}
 \dt^2 u - \Dl  u  + u^k =  \xi, 
\end{equation}

\noi
where $k\geq 2$ is an integer.
The main difficulty of this problem 
already appears in the stochastic convolution $\Psi$, 
solving the linear equation:
\begin{equation}
\label{SNLW2}
 \dt^2 \Psi - \Dl  \Psi   =  \xi.
\end{equation}

\noi
It is well known that for the spatial dimension $d \geq 2$, 
the stochastic convolution  $\Psi$ 
is not a classical function but is merely a Schwartz distribution.
 In particular, there is an issue in making sense of powers $\Psi^k$ and a fortiori of the full nonlinearity 
 $u^k$ in \eqref{SNLW}.
This requires us to modify the equation in order to take into account a proper {\it renormalization}.

In \cite{GKO}, by introducing appropriate time-dependent
renormalization, 
the authors proved local well-posedness of (a renormalized version of) \eqref{SNLW} on $\T^2$.
In \cite{GKOT} with Tolomeo, 
they constructed global-in-time dynamics for \eqref{SNLW}
in the cubic case ($k = 3$).
The local well-posedness argument in \cite{GKO}
essentially  applies to SdNLW \eqref{SdNLW}
with a general  power-type nonlinearity~$u^{k}$.
When $\al = \sqrt2$, 
the equation \eqref{SdNLW} 
corresponds to the so-called canonical stochastic quantization for the $\Phi^4_2$-measure in Euclidean quantum field theory (see \cite{RSS}), 
which 
formally preserves the Gibbs measure\footnote{Namely, the $\Phi^4_2$-measure on 
the $u$-component coupled with 
the white noise measure on the $\dt u$-component.}
for the deterministic nonlinear wave equation
studied in \cite{OTh2}.\footnote{Strictly speaking, 
the results mentioned here only apply 
to the nonlinear Klein-Gordon case, i.e.~$-\Dl$ in~\eqref{SdNLW} replaced by $1 - \Dl$.}
By combining the local well-posedness
argument 
with Bourgain's invariant measure argument
\cite{BO94, BO96}, 
it was shown in~\cite{GKOT}
that SdNLW \eqref{SdNLW}, 
with a general defocusing power-type nonlinearity $u^{2k+1}$, 
is almost surely globally well-posed
with the Gibbs measure initial data
and that the Gibbs measure is invariant under the dynamics.
We also mention a recent extension  \cite{ORTz} of these results
to the case of two-dimensional compact Riemannian manifolds
without boundary
and a recent work \cite{GKO2} in establishing local well-posedness
of the quadratic SNLW on the three-dimensional torus $\T^3$.

In the works mentioned above, 
renormalization played an essential role, 
allowing us to give a precise meaning to the equations.  
Our main goal in this paper is to study
the behavior of solutions to \eqref{SdNLW}, 
in a suitable limiting sense, 
{\it without} renormalization.
Namely, we consider 
the equation \eqref{SdNLW}
with a regularized noise, 
 via frequency truncation, 
and study possible limiting behavior of solutions
as we remove the regularization.
In particular, 
we establish a triviality
result
in a certain regime;
as we remove the regularization, 
solutions converge to 0 in the distributional sense.
See  Theorem~\ref{THM:strong} below.

Previously, 
Albeverio, Haba,  and Russo  \cite{russo4}
studied a triviality issue for the two-dimensional SNLW:
\begin{equation}
\label{SNLW3}
 \dt^2 u - \Dl  u  + f(u) =  \xi, 
\end{equation}

\noi
where $f$ is a bounded smooth function.
Roughly speaking, they showed that 
solutions to~\eqref{SNLW3} with regularized noises
tend to that to the stochastic linear wave equation~\eqref{SNLW2}.
Let us point out several differences between \cite{russo4} and our current work
(besides considering the equations with/without damping).
Our argument is strongly motivated by the solution theory
recently developed in \cite{GKO}.
In particular, we carry out our analysis
in a natural solution space $C([0, T]; H^{-\eps}(\T^2))$, $\eps > 0$.
On the other hand, 
the analysis in~\cite{russo4} was carried out in the framework of Colombeau generalized functions,
and as such, their solution does not a priori belong to 
$C([0, T]; H^{-\eps}(\T^2))$.
Furthermore, the cubic nonlinearity $u^3$ 
does not belong to the class of nonlinearities considered in \cite{russo4}.
Regarding a triviality result
for the equation \eqref{SNLW3}, 
we also mention a recent work~\cite{ORSW}
on the stochastic wave equation with the sine nonlinearity.

In the parabolic setting, 
Hairer, Ryser,  and Weber \cite{HRW} 
studied 
 the following  stochastic Allen-Cahn equation on $\T^2$:
\begin{align}
 \dt u = \Dl u + u - u^3 + \al \xi. 
 \label{AC}
\end{align}

\noi
By suitably adapting 
 the strong solution theory due to Da Prato and Debussche~\cite{DPD},
they established  triviality for this equation;
(i) in the strong noise regime, 
  solutions to \eqref{AC}
with regularized noises  tend  to $0$ as the regularization is removed
and (ii) in the weak noise regime, 
  solutions to \eqref{AC} with regularized noises
converge to a solution to a deterministic nonlinear heat  equation.
We will establish analogues of these results
in the wave equation context; see Theorems \ref{THM:strong} and \ref{THM:weak}
below.

We also mention a recent work  \cite{OPT1}
by  Pocovnicu, Tzvetkov, and  the first author
on the cubic NLW on $\T^3$ with random initial data
of negative regularity.
As a byproduct of the well-posedness theory in this setting, 
they established a triviality result
for the defocusing cubic NLW (without renormalization) 
 with deterministic  initial data perturbed by rough random data.

Lastly, we point out that,  
in the context of nonlinear Schr\"odinger-type equations, 
instability results
 in negative Sobolev spaces,  analogous to triviality, 
  are known even in the deterministic setting;
see \cite{GO,OW}.
See also \cite{KapM, Cha} for analogous results
in the context of the modified KdV equation,
showing the necessity of renormalization
in the low regularity setting.

\subsection{Main results}

Given $N \in \N$, we denote  by $\P_N$ the Dirichlet projection onto the spatial frequencies 
$\Z^2_N \stackrel{\text{def}}= \{ |n| \le N \}$.
We study the following truncated equation:
\begin{equation}
\label{NLW2}
\dt^2 u_N - \Dl u_N + \dt u_N + u_N^3 = \al_N \xi_N
\end{equation}

\noi
with the truncated noise
\[\xi_N \stackrel{\textup{def}}{=} \P_N \xi.\]

\noi
Here, 
 $\{\al_N\}_{N \in \N}$ is a bounded sequence of non-zero real numbers, 
which reflects the strength of the noise.
Our goal is to study the asymptotic behavior of $u_N$ as $N \to \infty$
in the following two regimes:
\[ 
\text{(i) } \lim_{N \to \infty} \al_N^2 \log N = \infty
\qquad 
\text{and} \qquad 
\text{(ii) }
\lim_{N \to \infty} \al_N^2 \log N \in [0, \infty).\]

\noi
We refer to the case (i) (and the case (ii), respectively)
as the strong noise case (and the weak noise case, respectively).

Let us fix some notations.
We write $e_n(x) \stackrel{\textup{def}}{=} \frac1{2\pi}e^{in\cdot x}$, $n\in\Z^2$,  
for the orthonormal Fourier basis in $L^2(\T^2)$. 
Given $s \in \R$, we  define the  Sobolev space  $H^s (\T^2)$ by  the norm:
\[
\|f\|_{H^s(\T^2)} = \|\jb{n}^s\ft{f}(n)\|_{\l^2(\Z^2)},
\]

\noi
where $\ft{f}(n)$ is the Fourier coefficient of $f$ and  $\jb{\,\cdot\,} = (1+|\cdot|^2)^\frac{1}{2}$. 
We also set 
\begin{equation*}
\H^{s}(\T^2)  \stackrel{\textup{def}}{=} H^{s}(\T^2) \times H^{s-1}(\T^2).
\end{equation*}

\noi
When we work with space-time function spaces, 
we use short-hand notations such as
 $C_TH^s_x = C([0, T]; H^s(\T^2))$
 and $L^p_\o = L^p(\O)$.
Given $A, B \geq 0$, we also set $A\wedge B = \min(A, B)$.
\smallskip

\noi
{\bf (i) Strong noise case:}
 We first consider the strong noise case:
\begin{align}
\lim_{N \to \infty} \al_N^2 \log N = \infty.
\label{C1}
\end{align}
 
 \noi
 In this case, 
 the noise remains singular (in the limit),
 which provides a strong cancellation property of the solution $u_N$ to \eqref{NLW2}.

 Given $N \in \N$ and $\al_N\in \R$, 
 fix $\ld_N = \ld_N(\al_N) \geq 0$ (to be determined later; see \eqref{CN2} below). 
 We define 
a pair $(z_{0,N}^{\o}, z_{1,N}^{\o})$ of random functions
by the following random Fourier series:
\begin{equation} 
z_{0,N}^{\o} = \frac{\al_N}{\sqrt{2}}\sum_{|n| \le N} \frac{ g_n(\o)}{\jb{n}_N} e_n
 \qquad
\text{and}
\qquad
z_{1,N}^{\o} = \frac{\al_N}{\sqrt{2}}\sum_{|n| \le N}  h_n(\o) e_n,
\label{IV}
\end{equation}

\noi
where $\jb{n}_N$ is defined by 
\[\jb{n}_N\stackrel{\text{def}}{=} \sqrt{\ld_N+|n|^2}\]

\noi
and 
$\{ g_n \}_{n\in\Z^2}$ and $\{ h_n \}_{n\in\Z^2}$ are sequences of mutually independent standard complex-valued\footnote
{This means that $g_0,h_0\sim\NN_\R(0,1)$
and  
$\Re g_n, \Im g_n, \Re h_n, \Im h_n \sim \NN_\R(0,\tfrac12)$
for $n \ne 0$.}
 Gaussian random variables on 
a probability space $(\O,\F,P)$ conditioned so that $g_{-n} = \cj{g_n}$ and $h_{-n} = \cj{h_n}$, $n \in \Z^2$.
We also assume that 
$\{ g_n, h_n  \}_{n\in\Z^2}$ is independent of the space-time white noise $\xi$ in \eqref{SdNLW}.

We now state our main result.
Given $s, b \in \R$ and $T > 0$, 
we define the time restriction space
 \[H^{b}([0,T]; H^{s}(\T^2))\]

\noi
by   the norm
\begin{align}
\|u\|_{H^{b}([0, T]; H^s(\T^2))} 
= \inf\big\{\| v \|_{H^b(\R; H^s(\T^2))}:  v|_{[0, T]} = u \big\}.
\label{def1}
\end{align}

\noi
Here,  the $H^b(\R; H^s(\T^2))$-norm is defined by 
\[ \| v \|_{H^b(\R; H^s(\T^2))}
= \|\jb{\tau}^b\jb{n}^s \ft v(\tau, n)\|_{L^2_\tau \l^2_n}, 
\]

\noi
where $\ft v(\tau, n)$
denotes  the space-time Fourier transform of $v$.

\begin{theorem}\label{THM:strong}
Let  $\{\al_N\}_{N \in \N}$ 
be a  bounded sequence  of  non-zero real numbers, 
satisfying~\eqref{C1}.
Then, there exists a divergent sequence $\{\ld_N\}_{N \in \N}$
such that 
given  any $(v_0, v_1) \in \H^1(\T^2)$, 
 $T > 0$, $\eps >0$, and  $N \in \N$, 
there exists almost surely
  a unique solution $u_N
  \in C([0,T]; H^{-\eps}(\T^2))$
 to \eqref{NLW2} with initial data 
\begin{align}
(u_N,\dt u_N)|_{t = 0} = (v_0, v_1) + (z_{0,N}^{\o},z_{1,N}^{\o}),
\label{IV2}
\end{align}

\noi
where $(z_{0,N}^{\o},z_{1,N}^{\o})$ is as in \eqref{IV}.
Furthermore, 
 $u_N$ converges in probability to the trivial solution $u\equiv 0$ 
in 
$H^{-\eps}([0,T]; H^{-\eps}(\T^2))$ as $N\to \infty$.

\end{theorem}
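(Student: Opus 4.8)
## Proof Proposal

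The plan is to adapt the Da Prato--Debussche/Gubinelli--Koch--Oh first-order expansion to the damped wave setting, where the delicate point is that the initial data contains a carefully tuned random component $(z_{0,N}^\o, z_{1,N}^\o)$ designed to absorb the stationary part of the stochastic convolution. First I would introduce the truncated stochastic convolution $\Psi_N$, the solution to the linear damped equation $\dt^2 \Psi_N - \Dl \Psi_N + \dt \Psi_N = \al_N \xi_N$ with the random data from \eqref{IV}. The choice of $\jb{n}_N = \sqrt{\ld_N + |n|^2}$ and the prefactor $\frac{\al_N}{\sqrt2}$ in \eqref{IV} should make $\Psi_N$ (approximately) stationary, so that its variance $\sigma_N^2 \stackrel{\textup{def}}{=} \E[\Psi_N(t,x)^2]$ is independent of $(t,x)$ and computable explicitly. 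A direct Fourier-side computation should give
\[
\sigma_N^2 \sim C \, \al_N^2 \sum_{|n|\le N} \frac{1}{\jb{n}_N^2} \sim C' \al_N^2 \log N \longrightarrow \infty
\]
under the strong-noise hypothesis \eqref{C1}; here the divergent sequence $\{\ld_N\}$ is chosen (this is the deferred choice \eqref{CN2}) precisely so that the mass term $\ld_N$ balances the damping and keeps $\Psi_N$ stationary while forcing $\sigma_N^2\to\infty$.

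Next I would set $u_N = \Psi_N + v_N$ and derive the equation for the remainder $v_N$. Expanding the cubic nonlinearity gives
\[
u_N^3 = \Psi_N^3 + 3\Psi_N^2 v_N + 3 \Psi_N v_N^2 + v_N^3,
\]
and the key cancellation mechanism is the following. Since $\Psi_N$ is a (real-valued) Gaussian with variance $\sigma_N^2$, one has $\E[\Psi_N^3]=0$ and $\E[\Psi_N^2] = \sigma_N^2$, so the Wick-renormalized quantities satisfy $\Psi_N^3 = \, :\!\Psi_N^3\!: +\, 3\sigma_N^2 \Psi_N$ and $\Psi_N^2 = \,:\!\Psi_N^2\!:+\,\sigma_N^2$. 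The crucial structural observation is that the \emph{unrenormalized} equation contains the term $3\sigma_N^2 \Psi_N$, a linear term in $\Psi_N$ with diverging coefficient $\sigma_N^2 \to \infty$. This acts as a large damping/restoring force on $\Psi_N$ itself: in the limit, the only way the equation can remain consistent is for the effective amplitude of $\Psi_N$ to be suppressed. The Wick powers $:\!\Psi_N^2\!:$ and $:\!\Psi_N^3\!:$ converge to genuine space-time distributions (this is the standard stochastic analysis, analogous to \cite{GKO}), with bounds controlled by $\al_N^2 \le C$ on appropriate probabilistic events. I would estimate these stochastic objects in $C_T H^{-\eps}_x$ (or the relevant $H^b_T H^s_x$ space), obtaining moment bounds that remain bounded uniformly in $N$.

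The heart of the argument is then the fixed-point/energy analysis for $v_N$. I would show that the diverging linear term $3\sigma_N^2 \Psi_N$ forces $v_N$ to converge to $-\Psi_N$ in an averaged sense, so that $u_N = \Psi_N + v_N \to 0$. Concretely, the equation for $v_N$ reads, schematically,
\[
\dt^2 v_N - \Dl v_N + \dt v_N + 3\sigma_N^2 \Psi_N + 3\sigma_N^2 v_N + (\text{Wick terms}) + (\text{lower order}) = 0,
\]
and the diverging coefficient $3\sigma_N^2$ multiplying $v_N$ provides a strong stabilizing effect. Rescaling $v_N = \Psi_N \wt v_N$ or testing against the solution directly, one finds that the $3\sigma_N^2(\Psi_N + v_N) = 3\sigma_N^2 u_N$ term is the dominant balance, and dividing by $\sigma_N^2 \to \infty$ forces $u_N \to 0$ distributionally. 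I expect the main obstacle to be making this heuristic rigorous while only controlling everything in negative-regularity spaces: the products $\Psi_N^2 v_N$ and $\Psi_N v_N^2$ are genuinely singular, so I would need paraproduct/multilinear estimates (in the spirit of \cite{GKO}) to show that, after dividing by $\sigma_N^2$, all nonlinear contributions except the dominant $3\sigma_N^2 u_N$ term are $o(\sigma_N^2)$ in $H^{-\eps}_T H^{-\eps}_x$. The uniform-in-$N$ control of $v_N$ in $C_T \H^{1-\eps}$, combined with the divergence $\sigma_N^2 \to \infty$ and the uniform bounds on the Wick objects, then yields convergence of $u_N = \Psi_N + v_N$ to $0$ in probability in $H^{-\eps}([0,T]; H^{-\eps}(\T^2))$, completing the proof of triviality.
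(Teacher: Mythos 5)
There is a genuine gap, and it sits at the core of your argument. You keep the divergent constant on the nonlinear side: your remainder equation contains the source term $3\s_N^2\Psi_N$ (in the paper's notation, $3\s_N z_N$ with $\s_N\to\infty$) together with the mass-like term $3\s_N^2 v_N$, and you then argue by ``dominant balance,'' dividing by the divergent coefficient to force $u_N\to0$. This is circular: the equation only asserts that $3\s_N^2 u_N$ equals other terms, and to conclude those are $o(\s_N^2)$ you would already need uniform-in-$N$ control of $v_N$ in $C_T\H^{1-\eps}_x$, which your scheme cannot deliver --- the Duhamel operator of the \emph{unmodified} damped wave equation has no $N$-dependent smoothing gain, so the Duhamel contribution of a term of the form $(\text{divergent constant})\times(\text{rough object})$ is genuinely divergent. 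The paper's Remark \ref{REM:a}\,(iii) makes exactly this point: a leftover term $(\ld_N-\wt\ld_N(t))u_N$ under the wave Duhamel integral cannot be treated uniformly in $N$, because the wave propagator lacks the parabolic smoothing that rescues the analogous step in Hairer--Ryser--Weber. The missing idea is to absorb the divergence into the \emph{linear operator}: the paper rewrites \eqref{NLW2} as $\L_N u_N + u_N^3-\ld_N u_N=\al_N\xi_N$ with $\L_N=\dt^2-\Dl+\dt+\ld_N$ and fixes $\ld_N$ self-consistently via \eqref{CN2} so that $\ld_N=3\s_N$. Then (i) the data \eqref{IV} is \emph{exactly} invariant for $\L_N z_N=\al_N\xi_N$ (note that your $\Psi_N$, solving the equation without the mass term, is \emph{not} stationary from the data \eqref{IV}, so your variance would be time dependent --- the very non-stationary trap of Remark \ref{REM:a}\,(iii)); (ii) the equation \eqref{SdNLW2} for $v_N$ contains only Wick powers and \emph{no} divergent coefficients; (iii) the modified propagator $\D_N(t)=e^{-t/2}\sin(tD_N)/D_N$, $D_N=\sqrt{\ld_N-\frac14-\Dl}$, gains a factor $\ld_N^{-\eps/2}$ from $L^2$ to $H^{1-\eps}$, and the stationary Wick powers themselves decay like $\ld_N^{-\eps/4}$ (Proposition \ref{PROP:estzN}). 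It is these two quantitative decays, not a balance heuristic, that drive triviality; uniform boundedness of the Wick objects (all you claim) would at best produce a nontrivial renormalized limit as in \cite{GKO}.

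A second, independent gap concerns the deterministic data. In your decomposition $v_N$ carries $(v_0,v_1)$, and under the \emph{unmodified} operator $\dt^2-\Dl+\dt$ its free evolution is independent of $N$, so it cannot tend to $0$ on a fixed interval $[0,T]$; your proposal offers no mechanism for this contribution to disappear. In the paper, the modified linear solution $v_N^{\lin}$ oscillates in time at frequency $\sim\sqrt{\ld_N}$, and Lemma \ref{LEM:LWN} converts this rapid oscillation, via integration by parts in the time Fourier variable, into convergence to $0$ in $H^{-\eps}([0,T];H^{1-\eps}(\T^2))$ --- this is precisely why Theorem \ref{THM:strong} is stated in $H^{-\eps}_TH^{-\eps}_x$ rather than $C_TH^{-\eps}_x$, a point your outline does not engage with. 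Finally, the passage from short times to all of $[0,T]$ is handled in the paper by a Burq--Tzvetkov-type energy estimate (Proposition \ref{PROP:GWP}) combined with the stopping-time iteration of \cite{HRW} applied to $V_N=v_N-v_N^{\lin}$; this globalization step is also absent from your proposal.
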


Seeing the regularity of the stochastic term, one may think
that the natural space for the convergence is
$C([0,T]; H^{-\eps}(\T^2))$.
We need to work in a larger space
$H^{-\eps}([0,T]; H^{-\eps}(\T^2))$
in order to establish   convergence of the deterministic (modified) linear solution
(defined in \eqref{lin1} below).
See Lemma \ref{LEM:LWN}.

Our proof is strongly motivated by the arguments in \cite{HRW, OPT1}.
The main idea can be summarized as follows;
while we consider a model without renormalization, 
we artificially renormalize the nonlinearity
at the expense of modifying the linear operator.
More concretely, 
given a suitable choice of divergent constants $\ld_N$, 
we first rewrite the truncated equation \eqref{NLW2} as follows:
\begin{equation}
\label{NLW3}
\L_N u_N  + u_N^3 - \ld_N u_N = \al_N \xi_N , 
\end{equation}

\noi
where $\L_N$ denotes the modified damped wave operator:
\begin{align}
\L_N \stackrel{\textup{def}}{=} \partial_t^2 -\Delta + \dt +\ld_N.
\label{LN}
\end{align}

\noi
As we see below, the constant $\ld_N$ will play a role of a renormalization constant.
See \eqref{non1}.

We now set $\ld_N = \ld_N(\al_N)$ to be the unique solution  to 
\begin{equation} \label{CN2}
\ld_N = \frac{3\al_N^2}{8\pi^2} \sum_{|n| \le N} \frac{1}{\jb{n}_N^2} = \frac{3\al_N^2}{8\pi^2} \sum_{|n| \le N} \frac{1}{\ld_N+|n|^2}.
\end{equation}

\noi
 See Lemma \ref{LEM:CN} below.
With this choice of $\ld_N$, it
 is easy to see that 
 the corresponding  linear dynamics:
\begin{equation}\label{dSLWN}
\L_N  u_N = \al_N \xi_N
\end{equation}

\noi
possesses a unique invariant mean-zero Gaussian  measure $\mu_N$ on $\H^0(\T^2)$ with the covariance operator
\begin{align}
\frac{\al_N^2}2\begin{pmatrix}  \P_N (\ld_N-\Dl)^{-1} & 0 \\ 0 & 1 \end{pmatrix}.
\label{cov}
\end{align}

\noi
See 
Lemma~\ref{LEM:inv} below.
Our choice of random functions 
$(z_{0,N}^{\o}, z_{1,N}^{\o})$ in \eqref{IV}
is such that the random part of the initial data $(u_0, u_1)$ in \eqref{IV2}
is distributed by the Gaussian measure $\mu_N$.
We point out that by setting $\s_N$ by 
\begin{equation} \label{var}
\s_N \stackrel{\text{def}}= \E \big[(z_{0,N}^{\o}(x))^2\big] 
= \frac{\al_N^2}{8\pi^2}\sum_{|n| \le N} \frac{1}{\jb{n}_N^2},
\end{equation}

\noi
we have
\begin{align}
 \ld_N = 3\s_N.
 \label{ld1}
\end{align}

\noi
In Lemma \ref{LEM:CN} below, 
we show that 
\begin{align}
 \ld_N = \frac{3}{4\pi} \al_N^2 \log N + \text{lower order error},
\label{ld2}
\end{align}

\noi
which allows us to show that 
the sequence $\{(z_{0,N}^{\o}, z_{1,N}^{\o})\}_{N \in \N}$
is  almost surely uniformly bounded in $\H^{-\eps}(\T^2)$ for any $\eps>0$.

In the following, we describe an outline of the proof of Theorem \ref{THM:strong}.
The main idea is to apply 
the  Da Prato-Debussche trick \cite{DPD}
and  look for a solution to \eqref{NLW2} (or equivalently to~\eqref{NLW3}) of the form $u_N = z_N+v_N$, 
where $z_N$ denotes the singular stochastic part
and $v_N$ denotes a smoother residual part.

Given $N \in \N$, let  $z_N$ denote the solution to the linear equation \eqref{dSLWN}
with $(z_N, \dt z_N)|_{t = 0}
=  (z_{0,N}^{\o}, z_{1,N}^{\o})$.
It follows from the discussion above that $z_N$ is
a stationary process such that 
\[\mathrm{Law}\big((z_N(t), \dt z_N(t)) \big)  = \mu_N\]

\noi
for any $t \in \R_+$.
By expressing $z_N$
in the Duhamel formulation (= mild formulation),\footnote
{One can easily derive the propagator $\D_N(t)$ in \eqref{D1}
by writing   the linear damped wave equation
$ \L_N u = 0$ on the Fourier side
and solving it directly for each spatial frequency.
See \cite{IIOW} for the case $\ld_N = 0$ (on $\R^d$).
Then, 
a standard variation-of-parameter argument yields
the Duhamel formulation \eqref{eq:zN}.
} we have
\begin{align}\label{eq:zN}
z_N (t) =\dt\D_N(t)z_{0,N}^\o + \D_N(t)(z_{0,N}^\o+z_{1,N}^\o)+\alpha_N\int_0^t\D_N(t-t')
\P_N dW(t'),
\end{align}

\noi
where $\D_N(t)$ is given by 
\begin{align}
\D_N(t) \stackrel{\textup{def}}{=} e^{-\frac{t}{2}} \frac{\sin\Big( t \sqrt{\ld_N-\frac{1}{4}-\Delta}\Big)}{\sqrt{\ld_N-\frac{1}{4}-\Delta}}
\label{D1}
\end{align}

\noi
and  $W$ denotes a cylindrical Wiener process on $L^2(\T^2)$:
\begin{align}
W(t)=  \sum_{n \in \Z^2 } \beta_n (t) e_n.
\label{BM}
\end{align}

\noi
Here,  $\{ \beta_n \}_{n \in \Z^2}$ is a family of mutually independent complex-valued
Brownian motions conditioned so that $\be_{-n} = \cj{\be_n}$, $n \in \Z^2$.
Moreover, we assume that 
$\{ \beta_n \}_{n \in \Z^2}$ is independent from 
$\{ g_n, h_n  \}_{n\in\Z^2}$ in \eqref{IV}.
By convention, we  normalize $\be_n$ such that $\text{Var}(\be_n(t)) = t$.
Note that  the space-time white noise $\xi$ is given by  $\xi = \frac{\dd W}{\dd t}$.

By setting $v_N = u_N - z_N$, 
it follows from 
\eqref{NLW3} with \eqref{IV2} that $v_N$ satisfies the following equation:
\begin{equation}
\label{NLW4}
\begin{cases}
\L_N v_N  + (v_N+z_N)^3 - \ld_N (v_N+z_N) = 0 \\
(v_N,\dt v_N)|_{t = 0} = (v_0,v_1).
\end{cases}
\end{equation}

\noi
By invariance of the Gaussian measure $\mu_N$, 
we see that $z_N (t)$ has the same law as $z_{0,N}$ for any $t \in \R_+$.
In particular, 
it follows from \eqref{IV} that there is no uniform 
(in $N$) bound for $z_N(t)$, when measured in $L^2(\T^2)$.
This  causes an issue 
in studying 
the powers $z_N^2$ and $z_N^3$, uniformly in $N \in \N$.

In  \cite{GKO, GKOT}, it is at this point that we introduced
Wick renormalization and considered a renormalized equation to overcome this issue.
Our goal is, however, to study the limiting behavior
of the solution $u_N$ to \eqref{NLW2} {\it without} renormalization.
In our current problem, 
we overcome this difficulty
 by following the idea in~\cite{HRW, OPT1} 
and artificially introducing a renormalization constant
$\ld_N$ in \eqref{NLW3}.
By expanding the last two terms in \eqref{NLW4}, 
we have
\begin{align}
(v_N+z_N)^3 - \ld_N (v_N+z_N)
= v_N^3 + 3 v_N^2 z_N + 
3 v_N (z_N^2 - \s_N) +  (z_N^3 - 3\s_N z_N), 
\label{non1}
\end{align}

\noi
where we used  \eqref{ld1}.
Then, it follows from \eqref{var}
that the last two terms precisely correspond to the renormalized powers
of $z_N^2$ and $z_N^3$.
See  Section \ref{SEC:2} for further details.

This artificial introduction of renormalization as in \eqref{non1} allows
us to study the equation~\eqref{NLW4}
for $v_N$.
A standard contraction argument allows us to prove local well-posedness
of~\eqref{NLW4}, expressed in the Duhamel formulation:
\begin{align}
v_N(t) = 
v_N^\lin(t)
+\int_0^t\D_N(t-t') \NN(v_N + z_N)(t') d t', 
\label{non2}
\end{align}

\noi
where 
$\NN(v_N+z_N) = (v_N+z_N)^3 - \ld_N (v_N+z_N)$
and 
$v_N^\lin(t)$ denotes the linear solution
with deterministic initial data $(v_0, v_1)$:
\begin{align}
v_N^\lin(t) = 
\dt\D_N(t)v_{0} + \D_N(t)(v_{0}+v_{1}).
\label{lin1}
\end{align}

\noi
On the one hand, the diverging behavior \eqref{ld2} of $\ld_N$ 
and \eqref{D1}
allow us to show that 
 the  second term on the right-hand side of \eqref{non2}
tends to 0 as $N \to \infty$.
This explicit decay mechanism is analogous to that  in the parabolic case studied in \cite{HRW}.
On the other hand, 
the linear solution $v_N^\lin$
does not enjoy such a decay property in an obvious manner.
The crucial point here is that, in view of the asymptotics \eqref{ld2},  the 
modified linear operator $\L_N$ in \eqref{NLW3}
introduces a rapid oscillation
and,  as a result, 
$v_N^\lin$ tends to 0 as a space-time distribution.
This oscillatory nature of the problem 
is a distinctive feature of a dispersive problem, not present in the parabolic setting, 
and was also exploited in \cite{OPT1}.
In this paper, 
we go one step further.  
By exploiting the rapid oscillation in the form of oscillatory integrals, 
we show that 
$v_N^\lin$ tends to 0 
in $H^{-\eps}([0,T]; H^{1-\eps}(\T^2))$.
See Lemma~\ref{LEM:LWN}.
This essentially explains the proof of Theorem \ref{THM:strong}
for short times.

In order to prove the claimed convergence on an arbitrary time interval $[0, T]$, 
we need to establish a global-in-time control of the solutions $v_N$.
An energy bound in the spirit of Burq and Tzvetkov \cite{BT-JEMS}
allows us to  prove global existence of $v_N$.
Unfortunately, such an energy bound (at the level of $\H^1(\T^2)$)  
grows in $N$, which may cause a potential issue.
In general, it may be a cumbersome task
to obtain a global-in-time control on  $v_N$, uniformly in $N \in \N$.
One possible approach may be to adapt  the $I$-method 
argument employed in~\cite{GKOT}.
In our case,  however, the situation is much simpler since 
we know that the limiting solution is $u\equiv 0$, 
which allows us to reduce the problem to  a small data regime.

\begin{remark}\label{REM:a} \rm
(i) For simplicity, we only consider the regularization
via the Fourier truncation operator $\P_N$ in \eqref{NLW2}. 
By a slight modification of the proof, 
we can also  treat  regularization  by mollification with  a mollifier $\rho_{\eps}$, $\eps\in (0,1]$ and taking the limit $\eps\rightarrow0$.

\smallskip

\noi
(ii) We consider the stochastic NLW with damping.
This allows us to have an invariant Gaussian measure $\mu_N$
for the linear dynamics \eqref{dSLWN}, 
which in turn implies that the renormalization constant $\ld_N$
defined in \eqref{CN2} and \eqref{ld1}
is time independent.
If we consider the stochastic NLW without damping, 
then $\ld_N$ would be time dependent.
This would then imply that 
  the modified linear operator $\L_N$
in \eqref{LN} is with a variable coefficient $\ld_N(t)$,
introducing an extra complication to the problem.
This is the reason we chose to study  the stochastic NLW with damping.

\smallskip

\noi
(iii) 
In the parabolic setting \cite{HRW}, the triviality result was stated 
only with deterministic initial data.
Namely, there was no need to add the random initial data
as 
 in~\eqref{IV2}.
In \cite{HRW}, 
 the residual part $v_N$
 satisfies an analogue of 
\eqref{NLW4}
with initial data
essentially of the form (written in the wave context):
\begin{align}
(v_N, \dt v_N)|_{t = 0} = (v_0, v_1) - (z_{0,N}^{\o},z_{1,N}^{\o}).
\label{IV4}
\end{align}

\noi
See the equation $(\Phi^{aux}_\eps)$
on p.\,6 in \cite{HRW}.
In the parabolic setting, this does not cause any difficulty
since the strong parabolic smoothing allows us to handle
rough initial data of the form \eqref{IV4} in the deterministic manner.
On the other hand, 
in the current wave context, 
we can not handle 
the random data in \eqref{IV4}, 
unless we introduce a further renormalization
(which would violate the point of this paper).

Let us  now try to see what happens if we  directly work with the non-stationary solution
to the linear equation \eqref{dSLWN}.
Let $\wt z_N$ denote the solution to \eqref{dSLWN}
with the trivial (= zero) initial data, given by 
\begin{align}
\wt z_N (t) =\alpha_N\int_0^t\D_N(t-t')
\P_N dW(t')
\label{XX1}
\end{align}

\noi
Then, by setting
\[ \wt \ld_N(t) = 3 \wt \s_N(t) = 3 \E\big[(\wt z_N(t, x))^2\big], \]

\noi
we can rewrite  \eqref{NLW2} as
\begin{equation}
\L_N u _N  + \big( u_N^3 - \wt \ld_N(t) u_N\big)
- (\ld_N - \wt \ld_N(t)) u_N = 0, 
\label{XX1a}
\end{equation}

\noi
where $\L_N$ is as in \eqref{LN}.
By writing $u_N = v_N + \wt z_N$, 
we easily see that 
the expression $u_N^3 - \wt \ld_N(t) u_N$
can be treated as in  \eqref{non1}
without causing any difficulty.
On the other hand, 
the weaker smoothing property 
of the damped wave equation (as compared to the heat equation)
causes an issue in treating
the last term $(\ld_N - \wt \ld_N(t)) u_N$. 
Define $z^{\text{hom}}_N $ by 
\begin{align}
z^{\text{hom}}_N (t) =\dt\D_N(t)z_{0,N}^\o + \D_N(t)(z_{0,N}^\o+z_{1,N}^\o), 
\label{XX2}
\end{align}

\noi
where 
$(z_{0,N}^\o, z_{1,N}^\o)$ is as in \eqref{IV}.
Then, 
it follows
from 
\eqref{eq:zN}, 
\eqref{XX1}, and \eqref{XX2}
together with  
 independence of 
$\{ g_n, h_n  \}_{n\in\Z^2}$ in~\eqref{IV}
and the cylindrical Wiener process $W$ in~\eqref{BM}
that 
\begin{align*}
 \ld_N - \wt \ld_N(t)  
 & = 3\E\big[(z^{\text{hom}}_N (t, x))^2\big]\\
& = \frac{3 e^{-t}\al_N^2}{8\pi^2} \sum_{|n|\leq N}
\bigg\{\frac{1}{ \jb{n}_N^2}
\bigg(\cos\Big(t \sqrt{\ld_N-\tfrac{1}{4}+|n|^2 }\Big)\bigg)^2\\
& \hphantom{XXXXXXXX}
+ 
\frac{\bigg(\sin\Big( t \sqrt{\ld_N-\frac{1}{4}+|n|^2}\Big)\bigg)^2}
{\ld_N-\frac{1}{4}+|n|^2}\bigg\}
 + O(1)\\
& = \frac{3 e^{-t}\al_N^2}{8\pi^2} \sum_{|n|\leq N}
\frac{1}
{\ld_N-\frac{1}{4}+|n|^2}
 + O(1), 
\end{align*}

\noi
which is logarithmically divergent for any $t \geq 0$.
This shows that the last term in~\eqref{XX1a} 
(under the Duhamel integral) can not be treated 
uniformly in $N \in \mathbb{N}$, thus exhibiting
non-trivial difficulty  in the non-stationary case.
Compare this with the heat case,  where the corresponding expression
for  $\ld_N - \wt \ld_N(t)$
is uniformly bounded in $N \in \mathbb{N}$ for any $t > 0$ 
(and logarithmically divergent when  $t = 0$) thanks to the strong smoothing property.

%
%
%
%
%
%
%

\smallskip

\noi
(iv) In Theorem \ref{THM:strong}, 
we treated the cubic case. It would be of interest to investigate the issue of triviality
for a higher order nonlinearity.
See also Remarks \ref{REM:x} and \ref{REM:higher}
on this issue in the weak noise case.

Our argument also makes use of the defocusing nature of the equation
in an essential manner.
In the focusing case, the modified linear operator $\L_N$ in \eqref{LN}
would be 
$\L_N = \partial_t^2 -\Delta + \dt -\ld_N$.
Namely, the diverging constant $\ld_N$ appears with a wrong sign
and we do not know how to proceed at this point.

\end{remark}

\smallskip

\noi
{\bf (ii) Weak noise case:}
Next, we consider the weak noise case:
\begin{align}
\lim_{N \to \infty} \al_N^2 \log N = \kk^2 \in [0, \infty).
\label{C2}
\end{align}
 
 \noi
 In particular,  we have $\al_N \to 0$ and thus 
 we expect convergence to a deterministic damped NLW.
 In this case, 
 we set 
 \begin{align}
 \L \stackrel{\text{def}}= \partial_t^2 -\Delta + \dt +1.
 \label{C2a}
 \end{align}
 
\noi
Namely, we can simply set $\ld_N \equiv 1$ in the previous discussion.
With a slight abuse of notation, we then 
define $\mu_N$ to be the mean-zero Gaussian measure
 on $\H^0(\T^2)$ with the covariance operator
 \begin{align*}
\frac{\al_N^2}2\begin{pmatrix}  \P_N (1-\Dl)^{-1} & 0 \\ 0 & 1 \end{pmatrix}.
\end{align*}

\noi
Then,  it follows that $\mu_N$ is 
 the unique invariant measure 
 for the  linear equation:
\begin{equation}\label{dSLWN2}
\L u_N = \al_N \xi_N.
\end{equation}

\noi
With a slight abuse of notation, 
we  use  $z_N$ to denote the solution to \eqref{dSLWN2} 
with the random initial data 
$(z_N, \dt z_N)|_{t = 0} = (z_{0,N}^{\o}, z_{1,N}^{\o})$
distributed by $\mu_N$
as in the previous case.
In particular, 
the random initial data in this case is given by 
 \eqref{IV} with $\ld_N = 1$, namely
\begin{equation} 
z_{0,N}^{\o} = \frac{\al_N}{\sqrt{2}}\sum_{|n| \le N} \frac{ g_n(\o)}{\jb{n}} e_n
 \quad
\text{and}
\quad
z_{1,N}^{\o} = \frac{\al_N}{\sqrt{2}}\sum_{|n| \le N}  h_n(\o) e_n.
\label{IV3}
\end{equation}

We now state our second result.

\begin{theorem}\label{THM:weak}
Let  $\{\al_N\}_{N \in \N}$ 
be a  bounded sequence  of real numbers, 
satisfying \eqref{C2}
for some $\kk^2 \in [0,  \infty)$ .
Then, given  any $(v_0, v_1) \in \H^1(\T^2)$,  $T>0$, $\eps > 0$, and  $N\in\N$,
there exists
almost surely   a unique solution $u_N
\in C([0, T]; H^{-\eps}(\T^2))$
 to \eqref{NLW2} with initial data 
\begin{align}
(u_N,\dt u_N)|_{t = 0} = (v_0, v_1) + (z_{0,N}^{\o},z_{1,N}^{\o}),
\label{IV5}
\end{align}

\noi
where $(z_{0,N}^{\o},z_{1,N}^{\o})$ is as in \eqref{IV3}.
Furthermore, 
 $u_N$ converges in probability to 
 $w_{\kk}$
in 
$C([0, T]; H^{-\eps}(\T^2))$
as $N\to \infty$, 
where $w_{\kk}$ is the unique solution to the following deterministic
damped NLW:
\begin{equation}
\label{dNLWw}
\begin{cases}
 \dt^2 w_{\kk} - \Dl w_{\kk} + \dt w_{\kk} + \frac3{4\pi} \kk^2 w_{\kk} + w_{\kk}^3 = 0 \\
 (w_{\kk},\dt w_{\kk})|_{t = 0} = (v_0,v_1).
\end{cases}
\end{equation}
\end{theorem}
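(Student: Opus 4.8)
The plan is to reuse the Da Prato--Debussche scheme of Theorem~\ref{THM:strong}, now with the fixed operator $\L$ in \eqref{C2a} (i.e.\ $\ld_N\equiv1$), exploiting that every stochastic object vanishes in the limit because $\al_N\to0$. Writing $u_N=z_N+v_N$, with $z_N$ the solution of \eqref{dSLWN2} with data $(z_{0,N}^\o,z_{1,N}^\o)$ as in \eqref{IV3}, and Wick-ordering with $\s_N$ as in \eqref{var} (with $\ld_N=1$), the residual $v_N$ solves
\[
\dt^2 v_N-\Dl v_N+\dt v_N+3\s_N v_N+v_N^3=-\RR_N,
\]
where $\RR_N=3(z_N^2-\s_N)v_N+3z_N v_N^2+(z_N^3-3\s_N z_N)+(3\s_N-1)z_N$ collects all genuinely stochastic contributions. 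The key structural point is that, since $\L$ no longer depends on $N$, the linear evolution of $(v_0,v_1)$ is $N$-independent and is shared with the Duhamel formula for the candidate limit $w_\kk$; there is thus no oscillatory loss as in the strong-noise case, which is precisely why one may hope for convergence in the stronger space $C_TH^{-\eps}_x$.

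Next I would record the stochastic inputs. Factoring $\al_N$ out of \eqref{IV3} and \eqref{eq:zN} gives $z_N=\al_N\wt z_N$, where the Wick powers of $\wt z_N$ are uniformly bounded in $C_TH^{-\eps}_x$ by the estimates underlying the local theory of \cite{GKO} and Theorem~\ref{THM:strong}. Since $\al_N,\al_N^2,\al_N^3\to0$ multiply these uniformly bounded quantities,
\[
z_N\to0,\qquad z_N^2-\s_N=\al_N^2\big(\wt z_N^2-\wt\s_N\big)\to0,\qquad z_N^3-3\s_N z_N=\al_N^3\big(\wt z_N^3-3\wt\s_N\wt z_N\big)\to0
\]
in $C_TH^{-\eps}_x$, and hence $\RR_N\to0$ in a space of negative regularity. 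At the same time, by \eqref{var}, the area estimate $\sum_{|n|\le N}\jb{n}^{-2}=2\pi\log N+O(1)$, and \eqref{C2}, one has $3\s_N\to\frac{3}{4\pi}\kk^2$. Sending $N\to\infty$ in the $v_N$-equation thus formally yields exactly \eqref{dNLWw} for $w_\kk$.

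To make this rigorous I would study the difference $d_N=v_N-w_\kk$, which starts from zero data (both $v_N$ and $w_\kk$ have initial data $(v_0,v_1)$) and solves, schematically,
\[
\dt^2 d_N-\Dl d_N+\dt d_N+3\s_N d_N+(v_N^3-w_\kk^3)=-\RR_N-\Big(3\s_N-\tfrac{3}{4\pi}\kk^2\Big)w_\kk.
\]
The cubic difference factors as $v_N^3-w_\kk^3=d_N(v_N^2+v_Nw_\kk+w_\kk^2)$, linear in $d_N$, while the right-hand side tends to $0$ by the stochastic estimates and the convergence of $3\s_N$. A short-time contraction in the GKO-type solution space then gives local well-posedness of $v_N$ with existence time uniform in $N$ (the data and stochastic norms being uniformly bounded), together with $d_N\to0$ on that interval; the only mildly delicate multilinear estimate is for the rough product $(z_N^2-\s_N)\,d_N$, controlled by a paraproduct bound and absorbed using the smallness $\al_N^2\to0$.

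The main obstacle is upgrading this to the full interval $[0,T]$, for, unlike in Theorem~\ref{THM:strong}, the limit $w_\kk$ is a nontrivial global solution rather than $0$, so the argument does not reduce to a small-data regime. Here I would first record that \eqref{dNLWw} is globally well-posed in $\H^1(\T^2)$ by the standard defocusing energy method, giving a fixed bound on $\|w_\kk\|_{C_T\H^1_x}$, and that a Burq--Tzvetkov-type energy estimate \cite{BT-JEMS} yields global existence of $v_N$. Because the stochastic perturbation is $o(1)$ as $N\to\infty$, a bootstrap shows that $v_N$ remains bounded in $C_T\H^1_x$ uniformly in $N$ (for $N$ large), close to $w_\kk$; with such uniform bounds in hand, a Gronwall argument applied to the difference equation propagates $d_N\to0$ from the short-time interval to all of $[0,T]$. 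Combining with $z_N\to0$ gives $u_N=z_N+v_N\to w_\kk$ in $C([0,T];H^{-\eps}(\T^2))$, and since all the stochastic estimates hold in every $L^p_\o$ (hence in probability), the asserted convergence in probability follows.
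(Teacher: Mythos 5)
Your proposal is correct and follows essentially the same route as the paper: the Da Prato--Debussche decomposition with $\ld_N\equiv 1$, Wick ordering with $\s_N\to \kk^2/4\pi$, decay of all stochastic objects since $:\!z_N^\l\!:$ carries a factor $\al_N^{2\l}$ in its variance (Lemma \ref{LEM:estzN3}), and a difference equation for $d_N=v_N-w_\kk$ with zero data, which is exactly the paper's $V_N$ solving \eqref{eq:wN}. The one point where you diverge is the globalization, and there your phrasing needs repair even though the underlying idea is sound. The paper does not prove (and cannot prove) your claim that ``$v_N$ remains bounded in $C_T\H^1_x$ uniformly in $N$'': the almost sure energy bound from the Burq--Tzvetkov argument is allowed to depend on $N$ and $\o$, the stochastic norms are small only in $L^p_\o$ (hence with high probability, not surely), and the rough forcing $:\!z_N^3\!:\,\in H^{-\eps}$ means the wave Duhamel operator returns only $H^{1-\eps}$, so uniform control is available only at regularity $1-\eps$ --- which is precisely why the paper measures $V_N$ in $X^{1-\eps}(T)$. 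Instead of a bootstrap, the paper proves a conditional short-time estimate (Lemma \ref{LEM:wN}) on intervals of length $T_0(\rho,R_\kk)$ independent of $N$, valid as long as $\|V_N\|_{X^{1-\eps}}\le\rho$, and then iterates over the $O(T/T_0)$ subintervals using the stopping times $\tau_N^\rho$ as in \cite{HRW}, showing $P(t_j\le \tau_N^\rho<t_{j+1})\to 0$ for each $j$ via \eqref{f5}; the factor $C_0^j$ accumulated in that iteration is exactly the exponential growth your Gronwall argument would produce, so the two mechanisms are equivalent in substance, with the stopping-time formulation automating the probabilistic bookkeeping that your bootstrap ``on a high-probability event'' would otherwise have to carry explicitly. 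With those two adjustments (work at regularity $1-\eps$, and quantify all smallness over a good event of probability tending to one), your argument coincides with the paper's proof.
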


Recall that, in Theorem \ref{THM:strong}, 
we needed to study the convergence in a space larger than
$C([0, T]; H^{-\eps}(\T^2))$.
This was due to 
the convergence property of the deterministic (modified) linear solution
$v_N^\lin$
in \eqref{lin1}.
In Theorem \ref{THM:weak}, 
we estimate the difference of the solution $u_N$ to \eqref{NLW2}
with initial data~\eqref{IV5}
and the limiting solution $w_\kk$ to \eqref{dNLWw}.
As such, the deterministic part $(v_0, v_1)$ of the initial data  cancels each other,
allowing us to prove 
the convergence  in a natural
space
$C([0, T]; H^{-\eps}(\T^2))$.

\begin{remark}\rm
As mentioned in Remark \ref{REM:a}, 
we consider the equation with damping
so that the linear equation \eqref{dSLWN2} preserves the Gaussian measure $\mu_N$.
This naturally yields the damped equation \eqref{dNLWw} 
as  the  limiting deterministic equation.
In this weak noise regime, 
however, 
 it is possible to introduce another parameter $\wt \al_N$ and tune the parameters 
 such that  the dynamics  converges to that generated by a standard deterministic NLW without damping.

Consider the following SdNLW:
\begin{equation}
\label{NLW6}
\dt^2 u_N - \Dl u_N + \wt{\al}_N \dt u_N + u_N^3 = \al_N \xi_N,
\end{equation}

\noi
where $\wt \al_N $ is a positive number,  tending to 0 as $N \to \infty$.
For $N \in \N$, set $\g_N^2 = \frac{\al_N^2}{2\wt{\al}_N}$.
We assume that $\{\g_N^2 \}_{N \in \N}$ is bounded.\footnote
{This in particular implies that $\al_N$ tends to $0$
since our assumption states that $\wt \al_N$ tends to $0$.}
Then, by repeating the proof of Theorems~\ref{THM:strong} and~\ref{THM:weak}, 
it is straightforward to see that the limiting behavior
of the solution $u_N$ to \eqref{NLW6}
is determined by 
\[ \lim_{N \to \infty} \g_N^2\log N = 
\lim_{N \to \infty} \frac{\al_N^2}{2\wt{\al}_N}
 \log N = \g^2\in [0, \infty].\]

\noi
We have the following two scenarios.
(i) If $\g^2 = \infty$, then the solution $u_N$ to \eqref{NLW6}  converges  to $0$.
(ii)
If $\g^2 \in [0,\infty)$, 
then the solution $u_N$ to \eqref{NLW6} converges  to the solution $w_\g$,
satisfying the following deterministic NLW (i.e.~without damping):
\[
\dt^2 w_{\g} - \Dl w_{\g} + \frac3{4\pi} \g^2 w_{\g} + w_{\g}^3 = 0.
\]

The main point is that the tuning of the parameters, 
making the sequence $\{\g_N^2\}_{N\in \N}$ bounded, 
allows us to make use of certain invariant Gaussian measures
for the (modified) linear dynamics.

\end{remark}

\begin{remark}\label{REM:x}\rm
In the weak noise case, it is possible to adapt our argument to 
a general defocusing power-type nonlinearity $u^{2k+1}$.
See Remark~\ref{REM:higher} for further details.

\end{remark}

\section{Preliminary results for the strong noise case}
\label{SEC:2}

In this section, we go over some preliminary materials
for the strong noise case (Theorem~\ref{THM:strong}), whose proof is presented in Section \ref{SEC:tri}.
In Subsection~\ref{SUBSEC:lin}, we prove that 
the Gaussian measure $\mu_N$ with the covariance operator
\eqref{cov}
is  the (unique) invariant measure for the 
linear  stochastic  wave equation \eqref{dSLWN}.
In Subsection \ref{SUBSEC:ld}, 
we establish the asymptotic behavior \eqref{ld2} of
the renormalization constant $\ld_N$.
In Subsection~\ref{SUBSEC:wick}, we define the renormalized powers $:\! z_N^\l\!:$
for the solution  $z_N$  to the linear equation~\eqref{dSLWN}
with $(z_N, \dt z_N)|_{t = 0}
=  (z_{0,N}^{\o}, z_{1,N}^{\o})$.
Lastly, 
in Subsection~\ref{SUBSEC:lin2}, 
we study the decay property of the deterministic linear solution 
 $v_N^{\lin}$ defined in \eqref{lin1}.

\subsection{On the invariant measure for the linear equation}
\label{SUBSEC:lin}
We begin by describing the invariant measure for the linear stochastic equation \eqref{dSLWN}:
\begin{equation*}
\L_N  u_N = \al_N \xi_N.
\end{equation*}

\noi
 We only sketch a proof since the argument is classical; see, for example, \cite{GKOT,ORTz} for a more detailed discussion.

\begin{lemma}\label{LEM:inv}
The linear stochastic wave equation \eqref{dSLWN} possesses a \textup{(}unique\textup{)} invariant mean-zero Gaussian  measure $\mu_N$ on $\H^0(\T^2)$ with the covariance operator
given in \eqref{cov}.
\end{lemma}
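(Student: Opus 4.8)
The plan is to reduce the invariance assertion to the standard theory of linear stochastic PDEs with additive noise, for which the unique invariant Gaussian measure is determined by the stationary covariance of the associated Ornstein--Uhlenbeck-type process. First I would rewrite the second-order equation \eqref{dSLWN} as a first-order system in the variable $U_N = (u_N, \dt u_N)$, so that
\begin{equation*}
dU_N = A_N U_N \, dt + \al_N B \, dW,
\qquad
A_N = \begin{pmatrix} 0 & 1 \\ -(\ld_N - \Dl) & -1 \end{pmatrix},
\qquad
B = \begin{pmatrix} 0 & 0 \\ 0 & \P_N \end{pmatrix},
\end{equation*}
where $W$ is the cylindrical Wiener process from \eqref{BM}. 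Because $A_N$ commutes with the Laplacian, the system diagonalizes in the Fourier basis $\{e_n\}$, and it suffices to analyze a two-dimensional constant-coefficient linear SDE for each frequency $n$ with $|n| \le N$ (the high frequencies $|n| > N$ carry no forcing and decay, contributing nothing in the stationary limit).

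For each fixed $n$, the drift matrix has eigenvalues $-\tfrac12 \pm \sqrt{\tfrac14 - \jb{n}_N^2}$, both with strictly negative real part (this is exactly where the damping term $\dt u$ is used), so the semigroup $e^{tA_N}$ is exponentially stable. Standard finite-dimensional Ornstein--Uhlenbeck theory then gives a unique invariant Gaussian measure, mean zero, with covariance matrix
\begin{equation*}
Q_n = \al_N^2 \int_0^\infty e^{s A_{N,n}} \begin{pmatrix} 0 & 0 \\ 0 & 1 \end{pmatrix} e^{s A_{N,n}^\ast} \, ds,
\end{equation*}
which can be computed either directly by the integral or, more cleanly, by solving the Lyapunov equation $A_{N,n} Q_n + Q_n A_{N,n}^\ast = -\al_N^2 \operatorname{diag}(0,1)$. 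A short computation should yield $Q_n = \tfrac{\al_N^2}{2}\operatorname{diag}(\jb{n}_N^{-2}, 1)$, which is precisely the $n$-th Fourier block of the covariance operator in \eqref{cov}. I would then assemble these blockwise covariances, using the independence across frequencies coming from the independence of the Brownian motions $\be_n$, to identify $\mu_N$ as the mean-zero Gaussian on $\H^0(\T^2)$ with covariance \eqref{cov}; the factorized structure $\P_N(\ld_N - \Dl)^{-1}$ in the first slot and the identity in the second slot matches the diagonal entries above.

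To finish, I would record two points. For existence/invariance, stationarity of the solution $z_N$ in \eqref{eq:zN} started from $\mu_N$ follows because the law of $z_N(t)$ is the pushforward of $\mu_N$ under the (affine) solution map, and the stationary covariance is a fixed point of this flow by construction of $Q_n$. For uniqueness, the exponential stability of $e^{tA_N}$ on each Fourier mode forces any invariant measure to have the same characteristic functional as $\mu_N$, hence to coincide with it. The step I expect to require the most care is verifying that the Gaussian measure $\mu_N$ is genuinely supported on $\H^0(\T^2)$ and that the infinite-dimensional assembly of the frequency-by-frequency computation is legitimate, i.e.\ that $\operatorname{tr}\big(\tfrac{\al_N^2}{2}\P_N(\ld_N-\Dl)^{-1}\big) < \infty$; this holds because $\P_N$ truncates to finitely many frequencies, so in fact $\mu_N$ is a finite-dimensional Gaussian lifted to $\H^0(\T^2)$ and all summability concerns are trivial. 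Since the argument is classical, I would present only this reduction and the Lyapunov/covariance computation, referring to \cite{GKOT, ORTz} for the routine details.
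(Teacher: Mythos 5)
Your proof is correct, but it verifies invariance by a genuinely different mechanism than the paper. You treat each Fourier block as a full two-dimensional (degenerate-noise) Ornstein--Uhlenbeck process: you check that the drift matrix has spectrum $-\tfrac12 \pm \sqrt{\tfrac14 - \jb{n}_N^2}$ in the open left half-plane (correct, since $\ld_N > 0$ by Lemma~\ref{LEM:CN}, and this is indeed where the damping enters), solve the Lyapunov equation for the stationary covariance, and get uniqueness directly from exponential stability of the semigroup. Your Lyapunov computation checks out: writing $Q_n = \begin{pmatrix} a & b \\ b & c \end{pmatrix}$, the off-diagonal entries force $b = 0$ and $c = \jb{n}_N^2\, a$, while the bottom-right entry forces $c = \tfrac{\al_N^2}{2}$, giving exactly the $n$-th block of \eqref{cov} and matching \eqref{IV}. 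The paper never solves for the covariance: it splits the generator of the system \eqref{sys} into the undamped (modified) wave part and the Langevin part acting on the velocity alone, observes that the wave semigroup acts as a rotation (under which the Gaussian law of $(z_{0,N}^\o, z_{1,N}^\o)$ is invariant, the $\jb{n}_N^{-1}$ weight in \eqref{IV} being exactly what makes the rescaled components i.i.d.\ $\NN_\C(0,\tfrac{\al_N^2}{2})$), that the Langevin piece has invariant law $\NN_\C(0,\tfrac{\al_N^2}{2})$ on the velocity, and that invariance under each piece yields invariance under the sum of the generators; uniqueness is delegated to \cite[Theorems 11.17 and 11.20]{DPZ14}. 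Your route buys self-containedness: the Lyapunov fixed point \emph{derives} \eqref{cov} rather than verifying a guessed measure, and stability gives uniqueness without an external citation (the nondegeneracy of your $Q_n$ also confirms the controllability that degenerate-noise OU theory requires, a point worth one explicit sentence). The paper's route buys a structural explanation of \emph{why} \eqref{cov} has this form: the position weight is precisely the one making the Gaussian rotation-invariant for the wave flow, while the Langevin part pins the velocity variance. Two points you handled that the paper glosses over: the unforced modes $|n| > N$ relax to $0$, so the identity in the velocity slot of \eqref{cov} must be read on the truncated frequencies, consistently with \eqref{IV}; and since everything is frequency-truncated, $\mu_N$ is a finite-dimensional Gaussian lifted to $\H^0(\T^2)$, so, as you note, all support and trace-class concerns are trivial.
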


\begin{proof} 
We only present a sketch of the proof.
For $|n|\leq N$, 
let 
\[ X_n = \begin{pmatrix}
\ft{u}_N(n)\\ \dt\ft{u}_N(n)
\end{pmatrix}.\] 

\noi
Then, in view of 
\eqref{BM}, 
we can rewrite 
the linear equation \eqref{dSLWN} as the following system of stochastic differential equations:
\begin{align}
dX_n = \begin{pmatrix}
0&1\\ -\jb{n}_N^2&0
\end{pmatrix}X_ndt +\bigg[\begin{pmatrix}
0&0\\0&-1
\end{pmatrix}X_ndt+\begin{pmatrix}
0\\\al_Nd\beta_n
\end{pmatrix}\bigg].
\label{sys}
\end{align}

\noi
The first part on the right-hand side
corresponds to the (modified) linear wave equation 
(without damping) whose semi-group acts as a rotation on 
each component of the vector~$X_n$. Since the distribution of a complex-valued Gaussian random variable is invariant under a rotation, 
we see that the solution to this linear wave equation, starting from the random initial data 
$(z_{0,N}^{\o},z_{1,N}^{\o})$ in \eqref{IV}, is stationary.

The second part on the right-hand side of \eqref{sys} corresponds to the Langevin equation for the velocity $\dt \ft{u}_N(n)$:
\[ d(\dt \ft{u}_N(n)) = -(\dt \ft{u}_N(n))dt + \alpha_Nd\beta_n,\]

\noi
whose solution is given by a complex-valued Ornstein-Uhlenbeck process.
Namely, 
its real and imaginary parts are given by independent
 Ornstein-Uhlenbeck processes.
Hence, it  has a unique invariant measure given by the Gaussian distribution 
$\NN_\C (0,\frac{\alpha_N^2}{2})$ (see, for example,~\cite[Theorem~7.4.7]{Kuo}), which is precisely the law of $\ft{z}_{1,N}^\o = \dt\ft{z}_N(0)$ defined in~\eqref{IV}.

For each $n \in \Z^2$ with $|n|\leq N$, 
 the generator of the dynamics \eqref{sys}
is given by the sum of the generators
of the first and second parts on the right-hand side of \eqref{sys}.
Hence, 
we conclude that 
the full linear stochastic wave equation  \eqref{dSLWN}, starting from 
$(z_{0,N}^{\o},z_{1,N}^{\o})$ in~\eqref{IV}, is also stationary. 
This means that the mean-zero Gaussian measure $\mu_N$ with the covariance operator \eqref{cov}
is invariant under \eqref{dSLWN}. 
One can also prove that $\mu_N$ is actually the unique invariant measure for this equation;
see Theorems 11.17 and 11.20 in \cite{DPZ14}.
\end{proof}

Recall that  $z_N$ defined by \eqref{eq:zN}  satisfies the linear stochastic 
 wave equation~\eqref{dSLWN}.
 Then, due to the invariance of  $\mu_N$  under the flow of \eqref{dSLWN}, the variance of $z_N(t)$ is 
 time independent and given by \eqref{var}:
\begin{align}
\s_N = \E \big[ (z_N(t,x))^2\big] = \E \big[ (z_N(0,x))^2\big] = \frac{\al_N^2}{8\pi^2}\sum_{|n| \le N} \frac{1}{\jb{n}_N^2}.
\label{var2}
\end{align}

\subsection{On the renormalization constant}
\label{SUBSEC:ld}

In this subsection, we study asymptotic properties
of the renormalization  constant $\ld_N$ implicitly defined  by \eqref{CN2}:
\begin{equation}
\ld_N = \frac{3\al_N^2}{8\pi^2} \sum_{|n| \le N} \frac{1}{\jb{n}_N^2}
= \frac{3\al_N^2}{8\pi^2} \sum_{|n| \le N} \frac{1}{\ld_N + |n|^2}.
\label{CN3}
\end{equation}

\noi
In particular, we prove the following lemma
on the asymptotic behavior of $\ld_N$ as $N \to \infty$.
See Lemma 3.1 in \cite{HRW} and  Lemma 6.1 in \cite{OPT1}
for analogous results.

\begin{lemma}\label{LEM:CN}
Given  $N \in \N$, there exists a unique number $\ld_N > 0$ satisfying the equation~\eqref{CN3}.
Moreover, if 
 $\{\al_N\}_{N \in \N}$ is a bounded sequence of non-zero real numbers
such that  $\lim_{N \to \infty} \al_N^2 \log N= \infty$, then we have 
\begin{align}
\ld_N = \frac3{4\pi} \al_N^2 \log N + O( \al_N^2 \log \log N)
\label{CN4}
\end{align}

\noi
as $N \to \infty$.

\end{lemma}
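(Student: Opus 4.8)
The plan is to establish the two assertions in turn: existence and uniqueness by a monotonicity argument, and then the asymptotic \eqref{CN4} from a sum-to-integral comparison combined with a short bootstrap. Throughout, set $S_N(\ld) = \sum_{|n|\le N}\frac{1}{\ld+|n|^2}$ for $\ld>0$. For existence and uniqueness I would study the function
\[
\Phi_N(\ld) = \ld - \frac{3\al_N^2}{8\pi^2}S_N(\ld).
\]
Each summand of $S_N$ is strictly decreasing in $\ld$, so $\Phi_N$ is continuous and strictly increasing on $(0,\infty)$. Since $\al_N\neq 0$, the $n=0$ term forces $S_N(\ld)\to\infty$ as $\ld\to 0^+$, whence $\Phi_N(\ld)\to-\infty$, while $S_N(\ld)\to 0$ as $\ld\to\infty$, whence $\Phi_N(\ld)\to+\infty$. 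By the intermediate value theorem and strict monotonicity, $\Phi_N$ has a unique zero $\ld_N>0$, which is the unique solution of \eqref{CN3}.

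The heart of the asymptotics is the estimate
\[
S_N(\ld) = \int_{|x|\le N}\frac{dx}{\ld+|x|^2} + O(1) = \pi\log\frac{\ld+N^2}{\ld} + O(1),
\]
uniformly for $\ld\ge 1$. I would prove the first equality by comparing, for each $n\neq 0$, the term $\frac{1}{\ld+|n|^2}$ with the integral of $\frac{1}{\ld+|x|^2}$ over the unit cell centered at $n$; the discrepancy is controlled by the gradient bound $\frac{|x|}{(\ld+|x|^2)^2}\lesssim |n|^{-3}$ for $|n|\ge 2$, and $\sum_{n}|n|^{-3}<\infty$ in two dimensions. The mismatch between $\{|x|\le N\}$ and the union of the cells is a boundary annulus of area $O(N)$ on which the integrand is $O(N^{-2})$, contributing $O(N^{-1})$, while the $n=0$ term and the central cell are each $O(1)$ for $\ld\ge 1$. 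The second equality is the explicit polar computation $\int_{|x|\le N}\frac{dx}{\ld+|x|^2}=\pi\log\frac{\ld+N^2}{\ld}$.

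With this estimate I would run a bootstrap on $\ld_N$. First, if $\ld_N$ stayed bounded by some $C$ along a subsequence, then monotonicity gives $S_N(\ld_N)\ge S_N(C)=2\pi\log N+O(1)$, forcing $\ld_N=\frac{3\al_N^2}{8\pi^2}S_N(\ld_N)\gtrsim \al_N^2\log N\to\infty$, a contradiction; hence $\ld_N\to\infty$. Once $\ld_N\ge 1$, the displayed estimate yields the matching upper bound $\ld_N\lesssim \al_N^2\log N\lesssim \log N$, so $\ld_N\ll N^2$ and in fact $\ld_N\asymp \al_N^2\log N$. Substituting back into $\ld_N=\frac{3\al_N^2}{8\pi^2}S_N(\ld_N)$ and using $\log\frac{\ld_N+N^2}{\ld_N}=2\log N-\log\ld_N+o(1)$ produces
\[
\ld_N = \frac{3}{4\pi}\al_N^2\log N - \frac{3\al_N^2}{8\pi}\log\ld_N + O(\al_N^2),
\]
where $\log\ld_N = 2\log|\al_N| + \log\log N + O(1)$.

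The final and most delicate point is that the hypothesis $\al_N^2\log N\to\infty$ is precisely what turns the correction into the stated error: it gives $\al_N^{-2}=o(\log N)$, hence $\big|\log|\al_N|\big| = \tfrac12\log(\al_N^{-2})\le \tfrac12\log\log N + O(1)$, so that $\log\ld_N = O(\log\log N)$ and $-\frac{3\al_N^2}{8\pi}\log\ld_N + O(\al_N^2) = O(\al_N^2\log\log N)$, which is exactly \eqref{CN4}. I expect the main obstacle to be making the sum-to-integral bound uniform in $\ld$ and correctly tracking the logarithmic self-consistency — in particular, recognizing that it is the strong-noise hypothesis $\al_N^2\log N\to\infty$ that allows the a priori unbounded factor $\log|\al_N|$ to be absorbed into $O(\al_N^2\log\log N)$.
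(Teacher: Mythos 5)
Your proposal is correct, and its skeleton matches the paper's: monotonicity for existence and uniqueness, then the two-sided bound $\ld_N \asymp \al_N^2 \log N$, then extraction of the $O(\al_N^2\log\log N)$ error. Where you genuinely diverge is the last step. The paper compares $\sum_{|n|\le N} (\ld_N+|n|^2)^{-1}$ term-by-term with $\sum_{|n|\le N}\jb{n}^{-2} = 2\pi\log N + O(1)$, writes the error as $\frac{3}{8\pi^2}\al_N^2\sum_{|n|\le N}\frac{1-\ld_N}{(\ld_N+|n|^2)\jb{n}^2} + O(\al_N^2)$, and splits this lattice sum at $|n| \sim |\al_N|\sqrt{\log N} \sim \sqrt{\ld_N}$: the high frequencies contribute $O(\al_N^2)$ and the low frequencies $O(\al_N^2\log\log N)$. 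You instead evaluate $S_N(\ld_N)$ by the closed-form integral $\pi\log\frac{\ld_N+N^2}{\ld_N}$ and read off the self-consistent relation $\ld_N = \frac{3}{4\pi}\al_N^2\log N - \frac{3\al_N^2}{8\pi}\log\ld_N + O(\al_N^2)$, then bound $\log\ld_N = O(\log\log N)$. The two routes are equivalent — the paper's low-frequency sum is $\sim \frac{1}{2}\cdot 2\pi\log\sqrt{\ld_N}$ in disguise — but yours is slightly more precise, as it identifies the next-order coefficient $-\frac{3}{8\pi}\al_N^2\log\ld_N$ explicitly rather than just its size. Two further differences: you (re)prove the sum-to-integral comparison, which the paper imports as Lemma \ref{LEM:log} from \cite{HRW}; your weaker $O(1)$-uniform version for $\ld\ge 1$ indeed suffices here since all such errors are damped by the prefactor $\al_N^2$. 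And you make explicit the role of the strong-noise hypothesis in taming $\log|\al_N|$ via $\al_N^{-2} = o(\log N)$, a point the paper leaves implicit (it is hidden in the bound $\sum_{|n|\ll|\al_N|\sqrt{\log N}}\jb{n}^{-2} \les \log\log N$, which needs only the boundedness of $\al_N$ from above since a short sum is harmless). One cosmetic slip in your write-up: the identity $\big|\log|\al_N|\big| = \tfrac12\log(\al_N^{-2})$ holds only when $|\al_N|\le 1$; for $|\al_N|\ge 1$ the boundedness of the sequence gives $\log|\al_N| = O(1)$ directly, so nothing is lost.
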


Before proceeding to the  proof of Lemma \ref{LEM:CN}, 
we first recall the following bound.
See  Lemma~3.2 in~\cite{HRW}.

\begin{lemma} \label{LEM:log}
Let $a$, $N \ge 1$.
Then, we have
\[
\bigg| \sum_{|n|\leq N} \frac{1}{a+|n|^2} - \pi \log \bigg( 1+\frac{N^2}{a} \bigg) \bigg|
\les \frac{1}{\sqrt{a}} \min \left( 1, \frac{N}{\sqrt{a}} \right).
\]
\end{lemma}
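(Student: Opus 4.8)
The plan is to read the main term $\pi \log(1 + N^2/a)$ as the integral of $f(x) := (a+|x|^2)^{-1}$ over the Euclidean disk $B_N := \{x \in \R^2 : |x| \le N\}$, and to treat $\sum_{|n|\le N} f(n)$ as a Riemann-sum approximation of that integral. Indeed, passing to polar coordinates gives
\[
\int_{B_N} \frac{dx}{a+|x|^2} = \pi \log\Big(1 + \frac{N^2}{a}\Big),
\]
so the task reduces to controlling the discrepancy between $\sum_{|n|\le N} f(n)$ and $\int_{B_N} f$. I would attach to each $n \in \Z^2$ the unit square $Q_n := n + [-\tfrac12, \tfrac12)^2$, set $D := \bigcup_{|n|\le N} Q_n$, and write $f(n) = \int_{Q_n} f(n)\, dx$ to split the discrepancy as
\[
\sum_{|n|\le N} f(n) - \int_{B_N} f
= \underbrace{\sum_{|n|\le N}\Big( f(n) - \int_{Q_n} f \Big)}_{E_1}
+ \underbrace{\Big(\int_D f - \int_{B_N} f\Big)}_{E_2}.
\]

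For the interior error $E_1$, I would Taylor expand $f$ about $n$. Since $Q_n$ is symmetric about $n$, the first-order term integrates to zero, leaving $|f(n) - \int_{Q_n} f| \les \sup_{Q_n}|\nb^2 f|$. A direct computation yields $|\nb^2 f(x)| \les (a+|x|^2)^{-2}$, and for $x \in Q_n$ one has $a+|x|^2 \gtrsim a + |n|^2$, so $\sup_{Q_n}|\nb^2 f| \les (a+|n|^2)^{-2}$. Summing and comparing with $\int_{\R^2}(a+|x|^2)^{-2}\,dx = \pi/a$ gives $|E_1| \les \sum_{n\in\Z^2}(a+|n|^2)^{-2} \les a^{-1}$.

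For the boundary error $E_2$, the key geometric observation is that $D$ and $B_N$ differ only near the circle $|x|=N$: since each $Q_n$ has diameter $\sqrt 2$, the symmetric difference satisfies $D \triangle B_N \subset \{\, N - \tfrac{\sqrt2}2 < |x| < N + \tfrac{\sqrt2}2 \,\}$, an annulus of width $\sqrt 2$ and area $\les N$ (the hypothesis $N\ge 1$ ensuring its inner radius is positive). On this annulus $a + |x|^2 \gtrsim a + N^2$, hence
\[
|E_2| \le \int_{D \triangle B_N} \frac{dx}{a+|x|^2} \les \frac{N}{a+N^2}.
\]

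Combining, I would obtain $|E_1 + E_2| \les a^{-1} + \frac{N}{a+N^2}$, and it remains to verify this is $\les \frac{1}{\sqrt a}\min(1, N/\sqrt a)$ using $a, N \ge 1$: the first term satisfies $a^{-1} \le \frac{1}{\sqrt a}\min(1, N/\sqrt a)$ (checking the cases $N \ge \sqrt a$ and $N < \sqrt a$ separately), while for the second, AM--GM gives $a + N^2 \ge 2N\sqrt a$, so $\frac{N}{a+N^2} \le \frac{1}{2\sqrt a}$, and also $\frac{N}{a+N^2} \le \frac{N}{a} = \frac{1}{\sqrt a}\cdot\frac{N}{\sqrt a}$; taking the smaller of the two yields the claim. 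The step I expect to be the main obstacle is obtaining the \emph{sharp} factor $\min(1, N/\sqrt a)$, specifically the $N/\sqrt a$ gain in the regime $N \les \sqrt a$: this forces one to exploit the symmetry-induced cancellation of the first-order Taylor term in $E_1$ (a crude first-order bound only produces $1/\sqrt a$, which is too large there) and to estimate both the annulus area and the size of $f$ on it precisely in $E_2$.
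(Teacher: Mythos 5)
The paper contains no proof of this lemma: it is imported verbatim as Lemma~3.2 of \cite{HRW}, with only the citation given, so there is no in-paper argument to compare against line by line. Your proof is correct and self-contained, and it follows the natural (and presumably the intended) route: the polar-coordinate identity $\int_{B_N}(a+|x|^2)^{-1}\,dx = \pi\log(1+N^2/a)$, the midpoint-rule estimate $|E_1| \les \sum_{n}(a+|n|^2)^{-2} \les a^{-1}$ (the comparison $a+|x|^2 \gtrsim a+|n|^2$ on $Q_n$ is indeed uniform, with $a \ge 1$ covering the small-$|n|$ squares), the inclusion of $D \triangle B_N$ in the width-$\sqrt2$ annulus giving $|E_2| \les N/(a+N^2)$, and the final case analysis against $\frac{1}{\sqrt a}\min(1, N/\sqrt a)$ all check out. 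One quibble with your closing diagnostic: the symmetry-induced cancellation in $E_1$ is convenient but not actually forced. A crude first-order bound $|f(n) - \int_{Q_n} f| \les \sup_{Q_n}|\nb f| \les (a+|n|^2)^{-3/2}$, summed \emph{only over $|n| \le N$} as the lemma requires, gives $\les N^2 a^{-3/2} \le N a^{-1}$ in the regime $N \le \sqrt a$ and $\les a^{-1/2}$ when $N \ge \sqrt a$, which already matches the target; the loss to a bare $1/\sqrt a$ that you describe occurs only if one wastefully extends the first-order error sum to all of $\Z^2$. So your mechanism is sound, but the sharp $\min(1, N/\sqrt a)$ factor comes from keeping the summation range finite at least as much as from the second-order cancellation.
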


We now present a proof of  Lemma \ref{LEM:CN}.

\begin{proof}[Proof of Lemma \ref{LEM:CN}]
Given $N \in \N$, let $\ld_N$ be as in \eqref{CN3}.
As $\ld_N$ increases  from $0$ to $\infty$, 
the right-hand side of \eqref{CN3} decreases from $\infty$  to $0$.
Hence, for each $N \in \N$, there exists a unique solution $\ld_N > 0$ to~\eqref{CN2}. 

From $\ld_N > 0$, we obtain
an upper bound $\ld_N\les \alpha_N^2\log N$.
From this upper bound
and the uniform boundedness of $\al_N$, 
we also obtain a  lower bound
$\ld_N\ges \alpha_N^2\log N$
for any sufficiently large $N \gg 1$.
Hence,  we have
\begin{align}
\ld_N\sim\alpha_N^2\log N
\label{CN4a}
\end{align}

\noi
for any $N \gg 1$.

From  Lemma \ref{LEM:log},
we have  
\[\sum_{|n| \le N} \frac{1}{\jb{n}^2} = 2 \pi \log N + O(1).\]

\noi
Then, in view of the uniform boundedness of $\al_N$, 
 the error term $R_N$  is given by 
\begin{equation}
\begin{split} R_N 
 & 
 = \ld_N -  \frac3{4\pi} \al_N^2 \log N 
= \ld_N - \frac{3}{8\pi^2}\al_N^2 \sum_{|n| \le N} \frac{1}{\jb{n}^2}
+ O(\al_N^2)\\
& = \frac{3}{8\pi^2} \al_N^2 \sum_{|n|\leq N } \bigg( \frac{1}{\ld_N + |n|^2} - \frac{1}{\jb{n}^2}\bigg)
+ O(\al_N^2)
\\
& = \frac{3}{8\pi^2} \al_N^2 \sum_{|n|\leq N } \frac{1-\ld_N}{(\ld_N + |n|^2)\jb{n}^2}
+ O(\al_N^2).
\end{split}
\label{CN5}
\end{equation}

\noi
Using \eqref{CN4a}, we can estimate 
the contribution to $R_N$ in \eqref{CN5}
from $\big\{| n |\ges |\alpha_N|\sqrt{\log N}\big\}$
as $O(\al_N^2)$, 
while the  contribution to $R_N$ in \eqref{CN5}
from $\big\{| n| \ll  |\alpha_N|\sqrt{\log N}\big\}$
is $O(\al_N^2 \log \log N)$.
Putting everything together, we obtain \eqref{CN4}.
\end{proof}

\subsection{On the Wick  powers}
\label{SUBSEC:wick}

Given $N \in \N$, let $z_N$ be the solution to the linear equation~\eqref{dSLWN}
with $(z_N, \dt z_N)|_{t = 0}
=  (z_{0,N}^{\o}, z_{1,N}^{\o})$.
In the following, we define the renormalized powers of $z_N$
and establish their regularity and  decay properties.

Recall that the Hermite polynomials $H_k(x; \s)$
are defined via the generating function:
\begin{equation*}
F(t, x; \s)  =  e^{tx - \frac{1}{2}\s t^2} = \sum_{k = 0}^\infty \frac{t^k}{k!} H_k(x;\s).
 \end{equation*}
	
\noi
In the following, we list the first few Hermite polynomials
for readers' convenience:
\begin{align}
& H_0(x; \s) = 1, 
\quad 
H_1(x; \s) = x, 
\quad
H_2(x; \s) = x^2 - \s, 
\quad  H_3(x; \s) = x^3 - 3\s x.
\label{Herm}
\end{align}

\noi
Then, 
given $\l \in  \Z_{\geq 0} \stackrel{\text{def}}{=} \N \cup\{0\}$,
we  define the Wick powers $:\!  z_N^\l  \!:$
by 
\begin{align}
:\!  z_N^\l (t,x)\!:  \, = H_\l( z_N(t,x); \s_N)
\label{Wick1}
\end{align}
in a pointwise manner, 
where $\s_N$ is as in \eqref{var2}.

Before proceeding further,  
let us first state several lemmas.
The first lemma states
the orthogonality  property of Wick products \cite[Theorem I.3]{Simon}.
See also \cite[Lemma 1.1.1]{Nualart}.

\begin{lemma}\label{LEM:W1}
Let $f$ and $g$ be Gaussian random variables with variances $\s_f$ and $\s_g$.
Then, we have 
\begin{align*}
\E\big[ H_k(f; \s_f) H_m(g; \s_g)\big] = \dl_{km} k! \big\{\E[ f g] \big\}^k.
\end{align*}

\noi
Here, $\dl_{km}$ denotes the Kronecker delta function.

\end{lemma}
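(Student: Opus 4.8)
The plan is to prove the identity via the generating function for Hermite polynomials, reducing the whole computation to the moment generating function of a jointly Gaussian pair. Throughout I assume, as is the case in the application (where $f$ and $g$ arise from the same underlying Gaussian field), that $f$ and $g$ are mean-zero and \emph{jointly} Gaussian; this is precisely what makes the single covariance $\E[fg]$ determine all of their joint moments and is implicit in the statement.

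First I would introduce two auxiliary real parameters $s, t$ and consider the product of the two generating functions,
\[
F(s, f; \s_f)\, F(t, g; \s_g) = e^{sf - \frac12 \s_f s^2}\, e^{tg - \frac12 \s_g t^2}.
\]
Taking expectations and using the standard formula $\E[e^{sf+tg}] = e^{\frac12(\s_f s^2 + 2\E[fg]\,st + \s_g t^2)}$ for a mean-zero jointly Gaussian vector, the deterministic corrections $-\frac12\s_f s^2$ and $-\frac12\s_g t^2$ cancel exactly against the diagonal entries of the covariance, leaving the clean identity
\[
\E\big[ F(s, f; \s_f)\, F(t, g; \s_g)\big] = e^{\E[fg]\, st}.
\]

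Next I would expand both sides as power series in $s$ and $t$ and match coefficients. On the left, inserting $F(s,x;\s) = \sum_{k} \frac{s^k}{k!} H_k(x;\s)$ and interchanging expectation with the double sum produces $\sum_{k,m} \frac{s^k t^m}{k!\,m!}\, \E[H_k(f;\s_f) H_m(g;\s_g)]$; on the right, expanding the exponential gives $\sum_{k} \frac{(\E[fg])^k}{k!}\, s^k t^k$, which contains only the diagonal monomials $s^k t^k$. Comparing the coefficient of $s^k t^m$ on the two sides then yields immediately $\E[H_k(f;\s_f) H_m(g;\s_g)] = \dl_{km}\, k!\, (\E[fg])^k$, which is the assertion.

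The only step requiring care — and the one I would treat as the main technical point — is the justification of the interchange of expectation and infinite summation, i.e.\ a Fubini/dominated-convergence argument for the random power series. This is routine: since the Gaussian moment generating function is finite for all real (indeed all complex) values of $s$ and $t$, the partial sums of $F(s,f;\s_f)F(t,g;\s_g)$ are dominated by an integrable envelope uniformly on any bounded region of the parameters, so term-by-term expectation and the subsequent coefficient comparison are fully legitimate.
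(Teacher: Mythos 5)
The paper gives no proof of this lemma at all: it simply cites \cite[Theorem I.3]{Simon} and \cite[Lemma 1.1.1]{Nualart}, and your generating-function computation is precisely the standard argument given in those references, so your proposal is correct and matches the intended proof (including your observation that the statement implicitly requires $f$ and $g$ to be mean-zero and \emph{jointly} Gaussian, which is indeed how the lemma is used, with $f = z_N(t,x)$ and $g = z_N(t,y)$). One small caveat on the step you single out as the main technical point: dominating the partial sums of the Hermite series by a fixed integrable envelope is delicate in the perfectly correlated case $g = \pm f$ (which does occur in the application, when one sets $x = y$), since the natural Cram\'er-type envelope $e^{f^2/(4\s_f)+g^2/(4\s_g)}$ is exactly borderline non-integrable there. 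This is harmless but worth phrasing correctly: the interchange is cleanly justified either by Fubini--Tonelli applied to the absolutely convergent moment series of $\E[e^{sf+tg}]$ (using $\E[e^{|s||f|+|t||g|}]<\infty$) followed by a Cauchy-product rearrangement with the entire factor $e^{-\frac12\s_f s^2-\frac12\s_g t^2}$, or by $L^2(\O)$ convergence of the series $\sum_k \frac{s^k}{k!}H_k(f;\s_f)$ together with bilinearity of the inner product.
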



Next, we recall the Wiener chaos estimate.
Let $\{ g_n \}_{n \in \N}$ be a sequence of independent standard Gaussian random variables defined on a probability space $(\O, \mathcal{F}, P)$, where $\mathcal{F}$ is the $\s$-algebra generated by this sequence. 
Given $k \in \Z_{\geq 0}$, 
we define the homogeneous Wiener chaoses $\mathcal{H}_k$ 
to be the closure (under $L^2(\O)$) of the span of  Fourier-Hermite polynomials $\prod_{n = 1}^\infty H_{k_n} (g_n)$, 
where
$H_j$ is the Hermite polynomial of degree $j$ and $k = \sum_{n = 1}^\infty k_n$.
(This implies
that $k_n = 0$ except for finitely many $n$'s.)
Then,  we have the following classical Wiener chaos estimate; see \cite[Theorem I.22]{Simon}.
\begin{lemma}\label{LEM:hyp}
Let $\l \in \Z_{\geq 0}$.
Then, we have 
 \begin{equation*}
\| X\|_{L^p(\O)}  \leq (p-1)^\frac{\l}{2}\|X\|_{L^2(\O)}
 \end{equation*}

\noi
for any random variable $X \in \mathcal{H}_\l$ and  any finite $p \geq 1$.

\end{lemma}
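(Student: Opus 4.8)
The plan is to deduce the estimate from Nelson's hypercontractivity theorem for the Ornstein--Uhlenbeck semigroup, which is the analytic heart of the matter. Write $\{T_t\}_{t \ge 0}$ for the OU semigroup acting on $L^2(\O)$ (equivalently, on $L^2$ of the underlying Gaussian measure), generated by the number operator and admitting the Mehler representation $(T_t F)(g) = \E_h\big[F(e^{-t} g + \sqrt{1-e^{-2t}}\, h)\big]$, where $h$ is an independent copy. Nelson's theorem asserts that $T_t \colon L^{p_0}(\O) \to L^{q}(\O)$ is a contraction, $\|T_t F\|_{L^q(\O)} \le \|F\|_{L^{p_0}(\O)}$, precisely when $e^{2t} \ge \frac{q-1}{p_0-1}$ for $1 < p_0 \le q < \infty$. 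Given the reference to \cite[Theorem~I.22]{Simon}, I would invoke this input rather than reprove it.

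The key structural fact I would use is that each homogeneous Wiener chaos $\mathcal{H}_\l$ is an eigenspace of $T_t$. On the spanning Fourier--Hermite polynomials $\prod_{n} H_{k_n}(g_n)$ with $\sum_n k_n = \l$, the Mehler formula factorizes over the independent $g_n$ and yields $T_t \big(\prod_{n} H_{k_n}(g_n)\big) = e^{-\l t} \prod_{n} H_{k_n}(g_n)$; by $L^2$-continuity of $T_t$ and invariance of the closed subspace $\mathcal{H}_\l$, this extends to give $T_t X = e^{-\l t} X$ for every $X \in \mathcal{H}_\l$.

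Combining the two ingredients settles the substantive range $p \ge 2$. Fixing such a $p$, I would take $p_0 = 2$, $q = p$, and choose the nonnegative time $t$ saturating Nelson's condition, namely $e^{2t} = p-1$. Then $T_t$ is an $L^2 \to L^p$ contraction, so
\[
e^{-\l t}\|X\|_{L^p(\O)} = \|T_t X\|_{L^p(\O)} \le \|X\|_{L^2(\O)},
\]
and multiplying through by $e^{\l t} = (p-1)^{\l/2}$ gives the claimed bound. For the remaining range $1 \le p \le 2$ there is nothing beyond the trivial embedding: since $P$ is a probability measure, Jensen's inequality yields $\|X\|_{L^p(\O)} \le \|X\|_{L^2(\O)}$, which already suffices for the applications in this paper.

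The only genuinely hard input is Nelson's hypercontractivity theorem itself, which I take as a cited black box; the rest of the argument is the short bookkeeping above. The single point demanding care is the passage from the dense span of degree-$\l$ Fourier--Hermite polynomials to the full closed chaos $\mathcal{H}_\l$, which is justified by the $L^2$-continuity of $T_t$ together with the $T_t$-invariance of $\mathcal{H}_\l$.
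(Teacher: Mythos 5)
Your proof is correct and is essentially the paper's own treatment: the paper gives no argument, simply citing \cite[Theorem I.22]{Simon}, and your derivation --- identifying the chaos $\mathcal{H}_\l$ as the $e^{-\l t}$-eigenspace of the Ornstein--Uhlenbeck semigroup (extended from the spanning Fourier--Hermite polynomials by $L^2$-continuity) and then applying Nelson's hypercontractivity at the saturating time $e^{2t} = p-1$ --- is precisely the standard proof behind that citation. Your remark on $1 \leq p < 2$ is also the right call: there the stated constant $(p-1)^{\frac{\l}{2}} < 1$ actually makes the literal inequality false for $\l \geq 1$ (take $X$ a standard Gaussian and $p \to 1$), so Jensen's bound $\|X\|_{L^p(\O)} \leq \|X\|_{L^2(\O)}$ is both all that is true and all the paper ever uses, since the lemma is only invoked for large $p$.
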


Our main goal here is to prove the following  regularity and decay properties
of the Wick powers $:\!  z_N^\l  \!:$.

\begin{proposition}\label{PROP:estzN}
\textup{(i)} 
Let $\l \in \N$.
Then, given any finite  $p,q \geq 1$,  $T>0$, and $\eps>0$,
we have
\[
\lim_{N \to \infty} \E \Big[ \| :\! z_N^\l (t)\!: \|_{L^q_T W_x^{-\eps,\infty}}^p \Big] =0.
\]

\smallskip

\noi
\textup{(ii)}
Given any finite  $p \geq 1$,  $T>0$, and $\eps>0$,
 we have
\begin{align}
\lim_{N \to \infty} \E \Big[\| z_N\|_{C_TH_x^{-\eps}}^p \Big] =0.
\label{estZ1}
\end{align}

\end{proposition}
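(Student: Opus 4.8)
The plan is to prove both parts by computing $L^2(\Omega)$ norms exactly using the orthogonality of Wick powers (Lemma~\ref{LEM:W1}), then upgrading to higher moments via the Wiener chaos estimate (Lemma~\ref{LEM:hyp}), and finally extracting pointwise-in-space and then space-time bounds by the usual Sobolev/Besov embedding together with Kolmogorov's continuity criterion. The key quantitative input is that the variance $\s_N$ satisfies $\s_N = \frac13 \ld_N$ by \eqref{ld1}, and by Lemma~\ref{LEM:CN} we have $\s_N \sim \frac{1}{4\pi}\al_N^2 \log N \to \infty$ under \eqref{C1}. The heuristic is that $:\!z_N^\l\!:$ behaves like $z_N^\l$ stripped of its divergent self-contractions, so its size is governed by products of covariances; since each covariance carries a factor $\al_N^2$ but the divergent $\log N$ has been removed by the renormalization, the leading behavior will in fact decay.

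First I would fix $t$ and estimate $\E\big[|\!:\!z_N^\l(t,x)\!:\!|^2\big]$. By Lemma~\ref{LEM:W1} applied with $f = g = z_N(t,x)$ (both Gaussian with variance $\s_N$), this equals $\l!\,\s_N^\l$. The crucial observation is that to control a negative-regularity norm I need the Fourier-side covariance. Writing $:\!z_N^\l\!:$ as an element of the $\l$-th Wiener chaos and using stationarity, I would compute $\E\big[\widehat{:\!z_N^\l(t)\!:}(n)\,\overline{\widehat{:\!z_N^\l(t)\!:}(n)}\big]$, which by the orthogonality of Wick powers reduces to an $\l$-fold convolution of the covariance symbol $\tfrac{\al_N^2}{2\jb{k}_N^2}$ of $z_N$. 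Summing against $\jb{n}^{-2\eps}$ and using that the single-mode covariance sums to $2\s_N$, the full $\H^{-\eps}$ computation yields $\E\big[\|z_N(t)\|_{H^{-\eps}_x}^2\big] \les \s_N \to \infty$ — so I do \emph{not} expect decay at the $L^2$ level for $z_N$ itself, and part (ii) must instead come from a difference/cancellation.

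Let me reconsider: the claimed limits are \emph{zero}, so the mechanism must be the $\al_N$ prefactor, not $\s_N$. Indeed each factor of $z_N$ carries $\al_N$, so $:\!z_N^\l\!:$ carries $\al_N^\l$, and the $L^2$ norm of $:\!z_N^\l\!:$ scales like $\s_N^{\l/2} \sim (\al_N^2 \log N)^{\l/2}$, which for $\l \ge 1$ does \emph{not} vanish. The actual decay must therefore be read off the Besov $W^{-\eps,\infty}_x$ norm rather than the naive pointwise variance: in $W^{-\eps,\infty}_x$ one tests against frequency-localized blocks $\mathbf{P}_j$, and $\E\big[\|\mathbf{P}_j :\!z_N^\l(t)\!:\|_{L^\infty_x}^2\big]$ gains a factor from the convolution structure that, after the renormalization has cancelled the leading $\log N$, produces an overall power of $\al_N^2$ with at most logarithmic losses; since $\al_N \to 0$ under \eqref{C1} (because $\al_N^2 \log N \to \infty$ forces nothing on $\al_N$ directly, but boundedness of $\al_N$ plus the structure gives the decay through the excess logarithm being killed), the limit is $0$. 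Concretely I would bound $\E\big[\|:\!z_N^\l(t)\!:\|_{W^{-\eps,\infty}_x}^p\big]$ via Lemma~\ref{LEM:hyp}, reducing to the $L^2(\Omega)$ computation on each Littlewood–Paley piece, summing the dyadic pieces with the $\eps$-derivative loss, and tracking the $\al_N$ powers carefully.

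For part (ii) I would then integrate in time: by Minkowski and the $L^2(\Omega)$ bound from part~(i) with $\l=1$, I get $\E\big[\|z_N\|_{L^q_T H^{-\eps}_x}^p\big]$, and to promote $L^q_T$ to $C_T$ I would apply Kolmogorov's continuity criterion, for which I need a bound on $\E\big[\|z_N(t)-z_N(t')\|_{H^{-\eps}_x}^p\big] \les |t-t'|^{\theta p}$ with $\theta > 0$, uniformly in $N$; this follows from the explicit Duhamel representation \eqref{eq:zN} and the regularity of the propagator $\D_N$. The main obstacle I anticipate is bookkeeping the $N$-dependence: I must show the combined prefactor $\al_N^2$ (times lower-order $\log\log N$ corrections coming from $\ld_N$ via Lemma~\ref{LEM:CN}) genuinely tends to $0$ after the $\eps$-loss, which requires the renormalization in $\s_N$ to cancel the divergent $\log N$ precisely and leave a factor that vanishes; keeping the time-difference estimates uniform in $N$ while $\ld_N \to \infty$ is the delicate part, since $\ld_N$ enters the propagator frequencies and one must avoid losing powers of $\ld_N$ that would spoil the decay.
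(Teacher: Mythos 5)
Your proposal contains a genuine gap: you misidentify the mechanism of decay. In the strong noise regime \eqref{C1} the sequence $\al_N$ need \emph{not} tend to zero (e.g.\ $\al_N \equiv 1$ is admissible), so no argument resting on ``the $\al_N$ prefactor'' or on ``the renormalization cancelling the divergent $\log N$'' can close. The actual source of smallness is the divergent mass $\ld_N$ sitting inside the covariance symbol: $z_N(t)$ is stationary with $\E\big[|\ft{z}_N(t,n)|^2\big] = \frac{\al_N^2}{2\jb{n}_N^2}$, where $\jb{n}_N^2 = \ld_N + |n|^2$, and the paper's key step is the interpolation bound $\jb{n}_N \geq \ld_N^{\frac{\eps}{2}}\jb{n}^{1-\eps}$ (see \eqref{eq:Dn}), which trades the $\eps$ of negative regularity for a quantitative factor $\ld_N^{-\frac{\eps}{2}}$. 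This yields $\E\big[|\jb{\nb}^{-\eps}:\!z_N^\l(t,x)\!:|^2\big] \les \ld_N^{-\frac{\eps}{2}}$ uniformly in $(t,x)$, and one concludes because $\ld_N \sim \frac{3}{4\pi}\al_N^2\log N \to \infty$ by Lemma \ref{LEM:CN} and hypothesis \eqref{C1}: the divergence of $\ld_N$ is \emph{used}, not cancelled. In particular your intermediate assertion that $\E\big[\|z_N(t)\|_{H^{-\eps}_x}^2\big] \les \s_N \to \infty$, ``so no decay at the $L^2$ level for $z_N$ itself,'' is a computational error: $\sum_{|n|\le N}\jb{n}^{-2\eps}\,\al_N^2\,\jb{n}_N^{-2} \les \ld_N^{-\frac{\eps}{2}}$ does decay; only the $\eps = 0$ quantity (the pointwise variance $\s_N = \ld_N/3$) grows. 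Your subsequent pivot --- asserting $\al_N \to 0$ while conceding in the same sentence that \eqref{C1} ``forces nothing on $\al_N$ directly'' --- leaves the proposal without any valid quantitative driver; the mechanism you describe is the one operative in the \emph{weak} noise case (Lemma \ref{LEM:estzN3}, where $\al_N \to 0$ indeed does the work), not here.

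The structural skeleton you set up is otherwise consistent with the paper: orthogonality of Wick powers (Lemma \ref{LEM:W1}) to compute the covariance as an $\l$-fold convolution of $\frac{\al_N^2}{2\jb{n}_N^2}$, then Minkowski and the Wiener chaos estimate (Lemma \ref{LEM:hyp}) to upgrade to $L^p(\O)$; the paper works in $W^{-\eps,r}_x$ for large $r$ plus Sobolev embedding rather than via Littlewood--Paley blocks, which is immaterial. For part (ii), your Kolmogorov-continuity route for the stochastic convolution is a plausible substitute for the paper's factorization-method argument based on \eqref{fa1} (the paper itself notes that the direct argument of \cite[Proposition 2.1]{GKO} can be modified), but even there the time-difference estimates must produce the factor $\ld_N^{-c\eps}$ via \eqref{eq:Dn}: uniformity in $N$ is not enough, since the statement asserts convergence to zero, and that again requires precisely the interpolation bound your proposal is missing.
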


\begin{proof}
(i) By Sobolev's inequality,  it suffices to show that
\begin{equation}\label{estzN1}
\lim_{N \to \infty} \E \Big[ \| :\! z_N^\l (t)\!: \|_{L^q_T W_x^{-\eps,r}}^p \Big] =0
\end{equation}

\noi
for any small $\eps>0$ and sufficiently large $r \gg1$.
We follow the argument in the proof of Proposition~2.1 in \cite{GKO}.
Fix $t \in \R_+$ and $x, y \in \T^2$.
Then, by Lemma \ref{LEM:W1} with the invariance of the distribution of $z_N(t)$
and \eqref{IV}, we have 
\begin{align*}
\E \big[ :\! z_N^\l (t,x)\!: :\! z_N^\l (t,y)\!: \big]
&= \l! \, \E [ z_N (t,x) z_N (t,y) ]^{\ell}
= \frac{\l !}{(8 \pi^2)^\l}   \Bigg\{ \sum_{|n| \le N} \frac{\al_N^2}{\jb{n}_N^2} e^{in\cdot (x-y)} \Bigg\}^{\l} \\
&= \frac{\l !}{(8 \pi^2)^\l} \sum_{n_1, \dots, n_{\l} \in \Z^2_N} 
\bigg( \prod_{j=1}^{\l} \frac{\al_N^2}{\jb{n_j}_N^2} \bigg) e^{i(n_1+\cdots+n_{\l})\cdot (x-y)}.
\end{align*}

\noi
By applying the Bessel potentials $\jb{\nb_x}^{-\eps}$
and $\jb{\nb_y}^{-\eps}$ of order $\eps$ and then setting $x = y$, 
we obtain
\begin{align*}
\E \Big[ |\jb{\nb}^{-\eps} :\! z_N^\l (t,x)\!:|^2 \Big]
&\sim  \sum_{n_1, \dots, n_{\l} \in \Z^2_N} \bigg( \prod_{j=1}^{\l} \frac{\al_N^2}{\jb{n_j}_N^2} \bigg) \jb{n_1+\cdots+n_{\l}}^{-2\eps}\\
& \les 
\ld_N^{-\frac \eps2}
 \sum_{n_1, \dots, n_{\l} \in \Z^2_N}   \frac{1}{
\prod_{j=1}^{\l-1}\jb{n_j}^2 \cdot \jb{n_\l}^{2-\eps} \jb{n_1+\cdots+n_{\l}}^{2\eps}}\\
& \les \ld_N^{-\frac \eps2},
\end{align*}

\noi
uniformly for all sufficiently large $N \gg 1$, 
where, in the first inequality, 
we used the uniform boundedness of  $\al_N$,
 Lemma \ref{LEM:CN},  
and the bound $\jb{n}_N\geq \ld_N^{\frac \eps2}\jb{n}^{1-\eps}$
for $\eps \in [0, 1]$.
Then, from Minkowski's integral inequality and the Wiener chaos estimate (Lemma \ref{LEM:hyp}), we obtain
\begin{equation}
\begin{split}
\Big\| \| :\! z_N^\l \!: \|_{L^q_T W_x^{-\eps,r}} \Big\|_{L^p(\O)}
&\le \Big\| \| \jb{\nb}^{-\eps} :\! z_N^\l (t, x)\!: \|_{L^p(\O)} \Big\|_{L^q_T L_x^r} \\
&\le p^{\frac{\l}{2}} \Big\| \| \jb{\nb}^{-\eps} :\! z_N^\l (t, x)\!: \|_{L^2(\O)} \Big\|_{L^q_T L_x^r}\\
& \le  C_{ \l}\,p^{\frac{\l}{2}} T^{\frac{1}{q}} \ld_N^{-\frac{\eps}{4}} 
\end{split}
\label{tail-z}
\end{equation}

\noi
for any finite $p \ge \max (q,r)$.
The claim \eqref{estzN1} follows from \eqref{tail-z} and 
the asymptotic behavior \eqref{CN4} 
of $\ld_N$ proved in  Lemma \ref{LEM:CN}.

\smallskip

\noi (ii)
We prove \eqref{estZ1} for any small $\eps > 0$.
Given $N \in \N$, define $\Psi_N$ by 
\[
\Psi_N(t) = \int_0^t \D_N(t-t') \P_N dW(t'),
\]

\noi
where $\D_N$ and $W$ are as in \eqref{D1} and \eqref{BM}.
With a slight abuse of notation, 
define a Fourier multiplier operator 
 $D_N$ by the following symbol 
 \begin{equation}\label{Dn}
D_N(n) = \sqrt{\lambda_N-\frac14+|n|^2}.
\end{equation}

\noi
Then, it follows from 
 \eqref{eq:zN}, the unitarity of $e^{\pm itD_N}$ on $H^{s}(\T^2)$, Minkowski's integral inequality, 
 and Lemma \ref{LEM:hyp} that 
\begin{align*}
\Big\|\|  z_N  & - \al_N \Psi_N \|_{C_TH_x^{-\eps}} \Big\|_{L^p(\O)}
\les \Big\| \| z_{0,N}^{\o} \|_{H^{-\eps}} \Big\|_{L^p(\O)} 
+ \Big\| \| D_N^{-1} z_{1,N}^{\o} \|_{H^{-\eps}} \Big\|_{L^p(\O)} \\
&\les \Big\| \| \jb{\nb}^{-\eps} z_{0,N}^{\o}(x) \|_{L^p(\O)} \Big\|_{L^2_x}  
+ \Big\| \| D_N^{-1} \jb{\nb}^{-\eps} z_{1,N}^{\o} (x)\| _{L^p(\O)} \Big\|_{L^2_x} \\
&\les_p \bigg( \sum_{|n| \le N} \frac{\al_N^2}{\jb{n}^{2\eps} \jb{n}_N^2} \bigg)^{\frac{1}{2}} 
+ \bigg( \sum_{|n| \le N} \frac{\al_N^2}{\jb{n}^{2\eps} \big(D_N(n)\big)^2} \bigg)^{\frac{1}{2}}
\les \ld_N^{-\frac{\eps}{4}}.
\end{align*}

\noi
In the last step, we once again used the uniform boundedness of $\al_N$
and also the following bound:
\begin{equation}\label{eq:Dn}
D_N(n)\sim\jb{n}_N\gtrsim\ld_N^{\frac \eps2}\jb{n}^{1-\eps}
\end{equation}

\noi
uniformly for all sufficiently large $N \gg 1$, 
 in view of \eqref{Dn} and Lemma~\ref{LEM:CN}.

Hence, it suffices to show that
\begin{equation} \label{stcon}
\bigg\| \sup_{0 \le t \le T} \| \Psi_N(t) \|_{H^{-\eps}} \bigg\|_{L^p(\O)}
\les \ld_N^{-\frac{\eps}{4}}.
\end{equation}

\noi
In view of  \eqref{eq:Dn}, 
one can easily modify 
the proof of Proposition 2.1 in \cite{GKO} to obtain \eqref{stcon}.
In the following, however, 
we apply the factorization method 
based on the elementary identity:
\begin{align}
\int_{t_2}^t (t-t_1)^{\g-1} (t_1-t_2)^{-\g} dt_1 = \frac{\pi}{\sin \pi \g}
\label{fa1}
\end{align}

\noi
for any  $\g \in (0,1)$ and $0 \leq t_2 \leq t$;
see \cite[Section 5.3]{DPZ14}.

Recall from \eqref{D1} and \eqref{Dn} that
\begin{align}
\D_N(t) = e^{-\frac t2} \frac{\sin(tD_N)}{D_N}. 
\label{D2}
\end{align}

\noi
Together with \eqref{fa1}, 
we have
\begin{align}
\begin{split}
\Psi_N(t) 
& = \sum_{\s\in \{-1, 1\}}\s
\int_0^t \frac{ e^{i \s (t- t') D_N}}{2i D_N}  \P_N dW(t')\\
& = \frac{\sin \pi \g}{\pi}
\sum_{\s\in \{-1, 1\}}\s
\int_0^t \frac{ e^{i \s (t- t_1) D_N}}{2i D_N}  (t - t_1)^{\g - 1} Y_{\s, N}(t_1) dt_1, 
\end{split}
\label{fa2}
\end{align}

\noi
where
\[ Y_{\s, N}(t_1) = \int_0^{t_1}  e^{i \s (t_1- t_2) D_N}  (t_1 - t_2)^{-\g}\P_N dW(t_2).\]

%
%
%
\noi
Then, from \eqref{fa2} and the boundedness of 
$e^{i \s t D_N}$ on $H^s(\T^2)$, 
we have 
\begin{align}
\|\Psi_N\|_{L^p_\o L^{\infty}_t([0,T])H_x^{-\eps}} 
&\les 
\sum_{\s\in \{-1, 1\}}\big\|(t-t_1)^{\g-1}D_N^{-1}Y_{\s, N}(t_1)\big\|_{L^p_\o L^{\infty}_t([0,T])L^1_{t_1}([0,t])H_x^{-\eps}}\notag \\
\intertext{By H\"older's inequality in $t_1$,  we continue with}
& \les
\sum_{\s\in \{-1, 1\}}
\| D_N^{-1}Y_{\s, N} \|_{L^p_\o L^p_TH_x^{-\eps}}, 
\label{fa3}
\end{align}

\noi
provided that $p > \frac{1}{\g}$.

By applying Fubini's theorem and H\"older's inequality, it suffices to estimate
\[\bigg\|\int_0^{t_1} e^{i \s (t_1- t_2) D_N}   (t_1-t_2)^{-\g} D_N^{-1} \P_N d W (t_2)\bigg\|_{L^p_\o H_x^{-\eps}},\]
uniformly in $t_1\in[0,T]$.
From Minkowski's integral inequality (for $p \geq 2$), 
the Wiener chaos estimate (Lemma \ref{LEM:hyp}), 
and \eqref{eq:Dn}, we estimate this term by 
\begin{align}
\begin{split}
\bigg\|\int_0^{t_1} & 
e^{i \s (t_1- t_2) D_N} 
(t_1-t_2)^{-\g}  D_N^{-1} \P_N d W (t_2)\bigg\|_{H^{-\eps}_xL^2_\o}\\
& \sim \bigg[\sum_{|n|\leq N}\jb{n}^{-2\eps}\big(D_N(n)\big)^{-2}\int_0^{t_1} (t_1-t_2)^{-2\gamma}dt_2\bigg]^{\frac 12}\\
& \les \ld_N^{-\frac{\eps}4}
\end{split}
\label{fa4}
\end{align}

\noi
uniformly for all sufficiently large $N \gg 1$, 
provided that $\g < \frac 12$.
The desired bound  \eqref{stcon} follows from \eqref{fa3} and \eqref{fa4}.
This completes the proof of Part (ii).
\end{proof}

\subsection{On the deterministic linear solution}
\label{SUBSEC:lin2}

In \cite{OPT1}, 
the authors exploited a rapid oscillation to show 
that a deterministic linear solution tends to 0 as a space-time distribution.
In the following lemma, 
by using  a rapid oscillation to evaluate relevant oscillatory integrals,
we show that the deterministic linear solution
$v_N^{\lin}$ defined in  \eqref{lin1} converges 
to 0 in $H^{-\eps}([0,T]; H^{1-\eps}(\T^2))$, $\eps > 0$.
This is the last ingredient for  the proof of Theorem~\ref{THM:strong}.

\begin{lemma}\label{LEM:LWN}
Given $(v_0,v_1)\in\H^1(\T^2)$, 
let  $v_N^{\lin}$ be the solution to the linear wave equation 
with $(v, \dt v)|_{t = 0} = (v_0,  v_1)$ defined in  \eqref{lin1}. 
Then, given  any $T>0$ and $\eps > 0$, $v_N^{\lin}$ converges to $0$ in
$H^{-\eps}([0,T]; H^{1-\eps}(\T^2))$
as $N \to \infty$.

\end{lemma}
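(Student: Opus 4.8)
The plan is to control the time-restriction norm by selecting a convenient extension to the whole line and then to convert the rapid temporal oscillation of $v_N^{\lin}$—encoded in the diverging frequency $D_N(n)$—into decay using the negative time regularity $\jb{\tau}^{-\eps}$. First I would fix a cutoff $\chi \in C_c^\infty(\R)$ with $\chi \equiv 1$ on $[0,T]$, so that by the definition \eqref{def1} of the restriction norm it suffices to prove $\|\chi v_N^{\lin}\|_{H^{-\eps}(\R; H^{1-\eps}(\T^2))} \to 0$. From \eqref{D1}, \eqref{Dn}, and \eqref{lin1}, a direct computation shows that the spatial Fourier coefficients satisfy
\[
\ft{v_N^{\lin}}(t, n) = e^{-\frac t2}\Big( \cos(t D_N(n))\, \ft v_0(n) + \frac{\sin(t D_N(n))}{D_N(n)}\big(\tfrac12 \ft v_0(n) + \ft v_1(n)\big)\Big).
\]
Writing the trigonometric factors as $e^{\pm i t D_N(n)}$ and setting $g(t) = \chi(t) e^{-\frac t2}$, the space-time Fourier transform of $\chi v_N^{\lin}$ becomes a finite linear combination of terms $c_{\pm}(n)\, \ft g(\tau \mp D_N(n))$, where $|c_\pm(n)| \les |\ft v_0(n)| + D_N(n)^{-1}\big(|\ft v_0(n)| + |\ft v_1(n)|\big)$ and $\ft g$ is Schwartz since $g \in C_c^\infty(\R)$.

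The crucial point is then that $\ft g(\tau \mp D_N(n))$ concentrates at $\tau \approx \pm D_N(n)$, so the weight $\jb{\tau}^{-2\eps}$ yields a gain of $\jb{D_N(n)}^{-2\eps}$. I would make this precise by the change of variables $\s = \tau \mp D_N(n)$ and splitting the $\tau$-integral at $|\s| = D_N(n)/2$: on the region $|\s| \le D_N(n)/2$ one has $\jb{\tau} \gtrsim D_N(n)$ directly, while on $|\s| > D_N(n)/2$ the Schwartz decay of $\ft g$ beats any power of $D_N(n)$. This leads to
\[
\int_\R \jb{\tau}^{-2\eps} \big|\ft{(\chi v_N^{\lin})}(\tau, n)\big|^2 \, d\tau
\les \jb{D_N(n)}^{-2\eps}\Big( |\ft v_0(n)|^2 + \frac{|\ft v_0(n)|^2 + |\ft v_1(n)|^2}{D_N(n)^2}\Big).
\]

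Finally, I would multiply by $\jb{n}^{2(1-\eps)}$ and sum over $n \in \Z^2$. Using \eqref{eq:Dn}, namely $D_N(n) \sim \jb{n}_N \gtrsim \ld_N^{1/2}$ together with $D_N(n)^2 \gtrsim \jb{n}^2$, the factor $\jb{D_N(n)}^{-2\eps}$ is bounded by $\ld_N^{-\eps}$ and may be pulled out, while $\jb{n}^{2(1-\eps)}/D_N(n)^2 \les \jb{n}^{-2\eps}$ tames the second term; since $(v_0,v_1)\in \H^1(\T^2) = H^1(\T^2)\times H^0(\T^2)$, both resulting sums are controlled by $\|(v_0,v_1)\|_{\H^1}^2$. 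This gives
\[
\|v_N^{\lin}\|_{H^{-\eps}([0,T]; H^{1-\eps}(\T^2))}^2 \les \ld_N^{-\eps}\, \|(v_0, v_1)\|_{\H^1(\T^2)}^2,
\]
which tends to $0$ as $N \to \infty$ since $\ld_N \to \infty$ by Lemma \ref{LEM:CN}. The main obstacle is the oscillatory-integral step: one must quantify that temporal oscillation at the diverging frequency $D_N(n)$ is transmuted into decay through the negative time regularity. This is precisely the mechanism that is absent in $C([0,T]; H^{1-\eps}(\T^2))$—where $v_N^{\lin}(0) = v_0$ is independent of $N$—and thereby explains why the statement must be phrased with a negative exponent in time.
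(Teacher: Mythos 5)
Your proof is correct and takes essentially the same approach as the paper: localize in time with a smooth cutoff, compute the space-time Fourier transform to see that it concentrates at $\tau \approx \pm D_N(n)$, convert the $\jb{\tau}^{-\eps}$ weight into a gain of $\jb{D_N(n)}^{-2\eps} \les \ld_N^{-\eps}$, and close using \eqref{eq:Dn} together with $\ld_N \to \infty$ from Lemma \ref{LEM:CN}. The only cosmetic differences are that the paper treats the $\sin(tD_N)/D_N$ component separately and directly in $L^\infty_t H^{1-\eps}_x$ (where $D_N^{-1}$ and \eqref{eq:Dn} already give $\ld_N^{-\eps/2}$ decay, no oscillation needed), and quantifies the temporal decay via integration by parts plus the convolution estimate of \cite[Lemma 4.2]{GTV}, whereas your explicit splitting of the $\tau$-integral at $|\s| = D_N(n)/2$ achieves the same effect by hand.
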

\begin{proof}
Fix  $\chi\in C^{\infty}_c(\R)$ such that $\chi\equiv 1$ on $[0,1]$
and set  $\chi_{_T}(t) = \chi(T^{-1} t)$ for $T > 0$. 
By setting 
\[V_0 = e^{-\frac t2}\cos\big(tD_N)v_0 
\qquad  \text{and} \qquad V_1 = \D_N(t)(v_1+\tfrac12 v_0),\]

\noi
we have 
$v_N^{\lin} = V_0 +V_1$.
Then, from the definition \eqref{def1}
and H\"older's inequality in time, we have
\begin{align}
\begin{split}
\|v_N^{\lin}\|_{H^{-\eps}_TH^{1-\eps}_x}
& \leq \|\chi_{_T}v_N^{\lin}\|_{H^{-\eps}_tH^{1-\eps}_x} \\
& \le \|\chi_{_T}V_0\|_{H^{-\eps}_tH^{1-\eps}_x} +C_T \|V_1\|_{L^{\infty}_tH^{1-\eps}_x} .
\end{split}
\label{lin5}
\end{align}

In view of \eqref{D2} with \eqref{eq:Dn}, 
the second term on the right-hand side of \eqref{lin5} can be estimated  by
\begin{align}
\|\jb{\nb}^{1-\eps}D_N^{-1}(v_0+v_1)\|_{L^2_x}\les \lambda_N^{-\frac \eps2}\|(v_0,v_1)\|_{\H^1}
\too 0
\label{lin6}
\end{align}

\noi
as $N \to \infty$.
As for the first term, we have 
\begin{align*}
\F_{t,x}(\chi_{_T}V_0)(\tau,n) 
&= \int_{\R}\chi_{_T}(t)e^{-\frac t2}\cos\big(tD_N(n))e^{-it\tau}\ft{v}_0(n)dt\\
&=\frac12\Big[\F_t(\chi_{_T}e^{-\frac t2})(\tau-D_N(n))+\F_t(\chi_{_T}e^{-\frac t2})(\tau+D_N(n))\Big]\ft{v}_0(n).
\end{align*}

\noi
Here, 
$\F_t$ and  $\F_{t,x}$
denote the temporal and space-time  Fourier transforms, respectively.
Integrating by parts and using the properties of $\chi_{_T}$, 
 we have
\begin{align*}
\big|\F_t(\chi_{_T}e^{-\frac t2})(\tau)\big| &= \bigg|\int_{\R}\chi_{_T}(t)e^{-\frac t2}e^{-it\tau}dt\bigg|=\jb{\tau}^{-2M}\bigg|\int_{\R}(1-\dt^2)^M\big[\chi_{_T}(t)e^{-\frac t2}\big]e^{-it\tau}dt\bigg|\\
&\le C_{T, M} \jb{\tau}^{-2M}
\end{align*}

\noi
for  $M \in \Z_{\ge 0}$ (and hence for any $M \geq0$), 
uniformly in  $\tau \in \R$. 
Therefore, we obtain
\begin{align}
\|\chi_{_T}V_0\|_{H^{-\eps}_tH^{1-\eps}_x}^2
&\les \sum_{|n|\leq N}\jb{n}^{2(1-\eps)}|\ft{v}_0(n)|^2
\sum_{\s \in \{-1, 1\}}
\int_{\R}\jb{\tau}^{-2\eps}\big|\F_t(\chi_{_T}e^{-\frac t2})(\tau + \s D_N(n))\big|^2d\tau \notag \\
&\les\sum_{|n|\leq N}\jb{n}^{2(1-\eps)}|\ft{v}_0(n)|^2
\sum_{\s \in \{-1, 1\}}
\int_{\R}\jb{\tau}^{-2\eps}\jb{\tau+\s D_N(n)}^{-1}d\tau \notag \\
&\les\sum_{|n|\leq N}\jb{n}^{2(1-\eps)}|\ft{v}_0(n)|^2\jb{D_N(n)}^{-\eps} \notag \\
&\les \lambda_N^{-\frac \eps2}\|v_0\|_{H^1}^2
\too 0
\label{lin7}
\end{align}

\noi
as $N \to \infty$,
where in the penultimate  step we used the estimate
\[\int_{\R}\jb{\tau}^{-a}\jb{\tau-\tau_0}^{-b}d\tau \les \jb{\tau_0}^{1-a-b}\]

\noi
 for any $\tau_0\in\R$ and any $a,b<1$ with $a+b>1$; 
see for example \cite[Lemma~4.2]{GTV}.
Putting  \eqref{lin5}, \eqref{lin6}, and \eqref{lin7} together, 
we conclude that 
 $v_N^{\lin}$ converges to 0 in
$H^{-\eps}([0,T]; H^{1-\eps}(\T^2))$
as $N \to \infty$.
\end{proof}

\section{Trivial limit in the strong noise case}
\label{SEC:tri}

In this section, we prove triviality in the strong noise case
(Theorem~\ref{THM:strong}).
In particular, we assume \eqref{C1} in the following.
As described in Section \ref{SEC:1}, 
we apply the Da Prato-Debussche trick
and work in terms of the residual term 
 $v_N = u_N-z_N$.
From 
\eqref{NLW4}, \eqref{non1}, 
\eqref{Herm}, and \eqref{Wick1}, 
we see that 
$v_N$ satisfies
\begin{equation}\label{SdNLW2}
\begin{cases}
\L_N  v_N+v_N^3 + 3   v_N^2z_N + 3  v_N :\!  z_N^2 \!:+ :\!  z_N^3\!:=0 \\
(v_N, \dt v_N)|_{t = 0} = (v_0,v_1).
\end{cases}
\end{equation}

\noi
The main idea is to use  the decay properties of the Wick powers
$:\!z_N^\l\!:$ and the deterministic linear solution $v_N^{\lin}$
proved in Section \ref{SEC:2}.

We first establish almost sure global well-posedness
of \eqref{SdNLW2}.
Given $s \in \R$ and $T > 0$, 
define the solution space $X^s(T)$
by setting
\begin{equation}\label{X}
X^s(T)  \stackrel{\textup{def}}{=} C([0,T];H^s(\T^2))\cap C^1([0,T];H^{s-1}(\T^2)).
\end{equation}

\begin{proposition}\label{PROP:GWP}
Let $N \in \N$. The Cauchy problem \eqref{SdNLW2} is almost surely globally well-posed in $\H^1(\T^2)$. More precisely, given  any $(v_0,v_1)\in\H^1(\T^2)$  and any $T>0$, 
there exists a set $\O_{T}\subset \O$ of full probability such that,
 for any $\o\in\O_{T}$
 and $N \in \N$, there exists a unique solution $v_N\in X^1(T)$  to \eqref{SdNLW2}.
\end{proposition}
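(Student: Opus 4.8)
The plan is to prove local well-posedness in $X^1(T_0)$ by a contraction argument and then globalize through an energy estimate in the spirit of Burq--Tzvetkov \cite{BT-JEMS}, using Gronwall's inequality to rule out finite-time blowup. I work with a fixed $N \in \N$, so no uniformity in $N$ is needed. By Proposition \ref{PROP:estzN}, almost surely $z_N \in C([0,T];H^{-\eps}(\T^2))$ and $:\! z_N^\l \!: \in L^q([0,T]; W^{-\eps,\infty}(\T^2))$ for $\l = 2,3$. The crucial point is that, for fixed $N$, the fields $z_N$ and $:\! z_N^\l \!:$ are spatially band-limited (since $z_N = \P_N z_N$ solves \eqref{dSLWN}, the relevant frequencies satisfy $|n| \les N$); hence Bernstein's inequality upgrades the above to $z_N, :\! z_N^2 \!: \in C([0,T]; L^\infty(\T^2))$ and $:\! z_N^3 \!: \in C([0,T]; L^2(\T^2))$, almost surely. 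This band-limited smoothness is exactly what will let the solution attain the full regularity $X^1$, rather than merely $C_TH^{1-\eps}_x$ as the negative regularity of the forcing would naively suggest. I fix a set $\O_T \subset \O$ of full probability on which these bounds hold simultaneously for all $N \in \N$.

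For the local theory, I consider the Duhamel map
\[
\Gamma(v)(t) = v_N^\lin(t) + \int_0^t \D_N(t-t')\, \NN(v + z_N)(t')\, dt',
\]
with $\NN$ and $v_N^\lin$ as in \eqref{non2}--\eqref{lin1}, and seek a fixed point in a ball of $X^1(T_0)$. The map $v \mapsto \NN(v + z_N) = -\big(v^3 + 3 v^2 z_N + 3 v :\! z_N^2 \!: + :\! z_N^3 \!:\big)$ is locally Lipschitz from $X^1(T_0)$ into $C([0,T_0]; L^2(\T^2))$: the cubic term is controlled via $H^1(\T^2) \embeds L^6(\T^2)$, and the remaining terms via the $L^\infty$- and $L^2$-bounds on the (now smooth) stochastic factors. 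Since $\D_N(s)$ gains one spatial derivative (its symbol carries the factor $D_N^{-1} \sim \jb{n}^{-1}$) while $\dt \D_N(s)$ preserves regularity, the Duhamel operator maps $C([0,T_0]; L^2(\T^2))$ into $X^1(T_0)$; together with $v_N^\lin \in X^1(T_0)$ (as $(v_0,v_1) \in \H^1(\T^2)$), this shows $\Gamma$ maps a ball of $X^1(T_0)$ to itself. Choosing $T_0 = T_0(\|(v_0,v_1)\|_{\H^1}, \o, N)$ small makes $\Gamma$ a contraction, yielding a unique local solution $v_N \in X^1(T_0)$.

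To globalize, I run the energy argument. Set
\[
E(t) = \tfrac12 \|\dt v_N(t)\|_{L^2}^2 + \tfrac12 \|\nb v_N(t)\|_{L^2}^2 + \tfrac{\ld_N}2 \|v_N(t)\|_{L^2}^2 + \tfrac14 \|v_N(t)\|_{L^4}^4 .
\]
Testing \eqref{SdNLW2} against $\dt v_N$ formally gives
\[
\frac{d}{dt} E(t) = -\|\dt v_N(t)\|_{L^2}^2 - \int_{\T^2} \big(3 v_N^2 z_N + 3 v_N :\! z_N^2 \!: + :\! z_N^3 \!:\big) \dt v_N\, dx .
\]
Estimating the integral by Hölder's and Young's inequalities — via $\|z_N\|_{L^\infty} \|v_N\|_{L^4}^2 \|\dt v_N\|_{L^2}$, $\|:\! z_N^2 \!:\|_{L^\infty} \|v_N\|_{L^2} \|\dt v_N\|_{L^2}$, and $\|:\! z_N^3 \!:\|_{L^2} \|\dt v_N\|_{L^2}$, and absorbing the results into $E$ (here $\ld_N > 0$ is used to dominate $\|v_N\|_{L^2}^2$) — I arrive at a differential inequality $\frac{d}{dt} E \les_N C_N(t)\,(E + 1)$, where $C_N(t) = 1 + \|z_N(t)\|_{L^\infty} + \|:\! z_N^2 \!:(t)\|_{L^\infty} + \|:\! z_N^3 \!:(t)\|_{L^2}^2$ lies in $L^1([0,T])$ on $\O_T$. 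Gronwall's inequality then bounds $E$ on all of $[0,T]$, and since $E$ controls $\|(v_N, \dt v_N)\|_{\H^1}^2$ (with an $N$-dependent constant coming from $\ld_N$), the usual continuation criterion extends the local solution to $[0,T]$. As $T > 0$ is arbitrary, this yields the claimed almost sure global well-posedness.

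I expect the energy step to be the main obstacle. First, the energy identity must be justified at the $H^1$ level of regularity, since the linear part $v_N^\lin$ only belongs to $C_TH^1_x$; I would handle this by regularizing the initial data, running the identity for smooth solutions, and passing to the limit using the continuous dependence furnished by the contraction argument, as in \cite{BT-JEMS}. Second, the resulting a priori bound — while perfectly adequate for a fixed $N$ — degrades as $N \to \infty$ through $\ld_N$ and the stochastic norms; this is precisely the loss of uniform control flagged after \eqref{non2}, but such uniformity is not required for the present statement and is instead recovered later by reducing to a small-data regime around the trivial limit.
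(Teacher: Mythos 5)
Your proposal is correct and follows essentially the same route as the paper: a contraction argument for the Duhamel map in $X^1$, exploiting the frequency localization $z_N = \P_N z_N$, $:\! z_N^\l \!: \, = \P_{\l N} :\! z_N^\l \!:$ together with Bernstein's inequality to convert the $W^{-\eps,\infty}_x$ bounds of Proposition \ref{PROP:estzN} into usable $L^\infty_x$ (resp.\ $L^2_x$) bounds, followed by globalization via the Burq--Tzvetkov energy argument with exactly the paper's energy functional (including the $\frac{\ld_N}{2}\|v\|_{L^2}^2$ term, with $\ld_N > 0$ from Lemma \ref{LEM:CN}) and Gronwall. The only blemish is that Proposition \ref{PROP:estzN} yields $L^q_T W^{-\eps,\infty}_x$ control of the Wick powers rather than the continuity in time you assert (so you should claim $L^q_T L^\infty_x$ and $L^q_T L^2_x$ after Bernstein, not $C_T L^\infty_x$ and $C_T L^2_x$), but this is harmless since your own Gronwall step only uses $C_N \in L^1([0,T])$, matching the paper's use of $L^2_T$-norms.
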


We recall the following lemma from \cite{GKO}.
\begin{lemma} \label{LEM:biest}
Let $0 \le s \le 1$.

\noi
\textup{(i)}
 Suppose that $1<p_j,q_j,r < \infty$, $\frac{1}{p_j}+\frac{1}{q_j}=\frac{1}{r}$, $j=1,2$.
Then, we have
\[
\| \jb{\nb}^{s} (fg) \|_{L^r (\T^d)}
\les \| f \|_{L^{p_1}(\T^d)} \| \jb{\nb}^{s} g \|_{L^{q_1} (\T^d)} +\| \jb{\nb}^s f \|_{L^{p_2} (\T^d)} \| g \|_{L^{q_2} (\T^d)}.
\]

\smallskip
\noi
\textup{(ii)} 
Suppose that $1<p,q,r < \infty$ satisfy the scaling condition $\frac{1}{p}+\frac{1}{q}\leq\frac{1}{r} + \frac{s}{d}$.
Then, we have
\[
\| \jb{\nb}^{-s} (fg) \|_{L^r (\T^d)}
\les \| \jb{\nb}^{-s} f \|_{L^p (\T^d)} \| \jb{\nb}^s g \|_{L^q (\T^d)}.
\]
\end{lemma}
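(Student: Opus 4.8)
The plan is to treat these as standard fractional-calculus estimates on $\T^d$: part (i) is a fractional Leibniz (Kato--Ponce) inequality, and part (ii) is a negative-order product estimate that I will deduce from (i) by duality together with Sobolev embedding. Throughout, the inhomogeneous Bessel potential $\jb{\nb}^s = (1-\Dl)^{s/2}$ is used (rather than $|\nb|^s$), so there is no delicate behavior at the zero frequency and all the frequency-localized pieces below remain harmless at low frequency. I would fix an inhomogeneous Littlewood--Paley decomposition $\{P_j\}_{j\ge0}$ on $\T^d$, with $P_j$ projecting onto spatial frequencies $|n|\sim 2^j$ for $j\ge1$ and $P_0$ onto $|n|\les 1$, and use the square-function characterization $\|h\|_{L^r(\T^d)}\sim\big\|(\sum_j|P_jh|^2)^{1/2}\big\|_{L^r(\T^d)}$, valid for $1<r<\infty$.

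For part (i), I would insert the paraproduct decomposition $fg = \Pi_{\mathrm{LH}}+\Pi_{\mathrm{HL}}+\Pi_{\mathrm{HH}}$, splitting $\sum_{j,k}(P_jf)(P_kg)$ according to $j\ll k$, $j\gg k$, and $j\sim k$. In the low-high piece the output frequency is comparable to $2^k$, so $\jb{\nb}^s$ acts essentially as $2^{sk}$ and falls on the $g$-factor; summing via the square function and Hölder with $\tfrac1{p_1}+\tfrac1{q_1}=\tfrac1r$ produces the term $\|f\|_{L^{p_1}}\|\jb{\nb}^sg\|_{L^{q_1}}$, and the high-low piece is handled symmetrically. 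The high-high (resonant) piece is the one point requiring care: there the output frequency can be much smaller than $2^j\sim2^k$, so the derivative must be redistributed onto one of the factors, and the restriction $0\le s\le1$ lets me do this with a Schur-type summation in $j\sim k$, absorbing the result into either term on the right-hand side. This part is essentially the argument of Proposition 2.1 in \cite{GKO}, i.e.\ the classical Kato--Ponce proof, and is where the bulk of the work lies.

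For part (ii), I would argue by duality. Writing $\psi=\jb{\nb}^{-s}\phi$ for a test function $\phi$ and using self-adjointness of $\jb{\nb}^{-s}$ together with $\jb{\nb}^{-s}\jb{\nb}^{s}=\mathrm{Id}$, one has the identity
\[
\big\langle \jb{\nb}^{-s}(fg),\,\phi\big\rangle
= \big\langle fg,\,\psi\big\rangle
= \big\langle \jb{\nb}^{-s}f,\ \jb{\nb}^{s}(g\psi)\big\rangle,
\]
so that $\|\jb{\nb}^{-s}(fg)\|_{L^r}\les\|\jb{\nb}^{-s}f\|_{L^p}\sup\|\jb{\nb}^s(g\psi)\|_{L^{p'}}$ by Hölder, the supremum being taken over $\|\phi\|_{L^{r'}}\le1$. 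Applying part (i) to $\jb{\nb}^s(g\psi)$ splits this into $\|\jb{\nb}^sg\|_{L^q}\|\psi\|_{L^a}$ and $\|g\|_{L^b}\|\jb{\nb}^s\psi\|_{L^{r'}}$ for suitable Hölder exponents; since $\|\jb{\nb}^s\psi\|_{L^{r'}}=\|\phi\|_{L^{r'}}$, it remains to bound $\|\psi\|_{L^a}$ and $\|g\|_{L^b}$. Both are controlled by Sobolev embedding ($W^{s,r'}\embeds L^a$ for $\psi$, and $\jb{\nb}^sg\in L^q$ giving $g\in L^b$), and a direct check of the exponents shows that the hypothesis $\tfrac1p+\tfrac1q\le\tfrac1r+\tfrac sd$ is precisely the condition under which these embeddings are admissible. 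This closes part (ii).

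I expect the genuine obstacle to be the resonant (high-high) term in part (i); once the fractional Leibniz inequality is in hand, part (ii) is a short duality-plus-embedding argument in which the scaling condition enters transparently. As an alternative to the paraproduct computation, one may instead transfer the corresponding estimates from $\R^d$ to $\T^d$ via a de Leeuw-type transference theorem, thereby reducing (i) to the classical Kato--Ponce inequality on Euclidean space.
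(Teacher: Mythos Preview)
Your proposal is correct, and in fact it supplies considerably more detail than the paper does: the paper does not prove this lemma at all but simply recalls it from \cite{GKO}, remarking that (i) is a consequence of the Coifman--Meyer theorem together with the transference principle, and that (ii) was proved in \cite{GKO} in the equality case $\tfrac1p+\tfrac1q=\tfrac1r+\tfrac sd$, the inequality case following by a ``straightforward modification.'' Your direct paraproduct argument for (i) is the standard hands-on route and is essentially equivalent to invoking Coifman--Meyer; the transference alternative you mention at the end is exactly what the paper points to. For (ii), your duality-plus-Sobolev argument is again the standard way to derive such negative-order product estimates from the Leibniz rule. One small caveat: in your duality step you fix the exponent $r'$ on $\jb{\nb}^s\psi$, which forces $\tfrac1b=\tfrac1r-\tfrac1p$, and this can fail to lie in $(0,1)$ for some admissible parameter choices (e.g.\ when $p\le r$); the cleanest way around this---and what the paper's phrase ``straightforward modification'' presumably has in mind---is to first prove the equality case and then use the embedding $L^{\tilde r}(\T^d)\hookrightarrow L^r(\T^d)$ for $\tilde r\ge r$ (compactness of $\T^d$) to pass to the inequality $\tfrac1p+\tfrac1q\le\tfrac1r+\tfrac sd$. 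In the parameter ranges actually used in the paper this issue does not arise, so your argument goes through without change there.
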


The first estimate is a consequence of the Coifman-Meyer theorem
and the transference principle.
See \cite{GKO} for the references therein.
Note that
while  the second estimate
was shown only for 
$\frac{1}{p}+\frac{1}{q}= \frac{1}{r} + \frac{s}{d}$
in \cite{GKO}, 
the general case
$\frac{1}{p}+\frac{1}{q}\leq\frac{1}{r} + \frac{s}{d}$
follows from a straightforward modification.

We now present a proof of Proposition \ref{PROP:GWP}.

\begin{proof}[Proof of Proposition \ref{PROP:GWP}]
Let  $(v_0,v_1)\in\H^1(\T^2)$
and fix a target time $T > 0$ as in the statement. 
We first briefly go over local well-posedness of \eqref{SdNLW2}
with a  control on $[0, T]$.
By writing \eqref{SdNLW2}  in the Duhamel formulation, we have 
\begin{align}
\begin{split}
v_N(t) & = \G_N (v_N) (t)\\ &\stackrel{\textup{def}}{=} \dt \D_N(t) v_0 + \D_N (t) (v_0+v_1) \\
&\quad - \int_0^t \D_N(t-t') \big( v_N^3 + 3   v_N^2 z_N + 3  v_N :\!  z_N^2 \!: + :\!  z_N^3\!: \big) (t') dt'.
\end{split}
\label{D4a}
\end{align}

Let $D_N(n)$ be as in \eqref{Dn}.
Recall from Lemma \ref{LEM:CN} that we have $\ld_N > 0$.
Then, by separately estimating the cases $D_N \ges 1$ and $D_N \ll 1$, 
we have 
\begin{align}
\bigg|e^{-\frac{t}{2}} \frac{\sin t D_N(n)}{D_N(n)} \bigg|
\les \jb{n}^{-1}
\label{D4}
\end{align}

\noi
for any $N \geq 1$, $n \in \Z^2$, and $t \geq 0$.
Hence, in view of \eqref{D2}, we have 
\begin{align}
\|\G_N(v_N)\|_{X^1(\dl)} \les \|(v_0,v_1)\|_{\H^1} 
+ \|v_N^3 + 3   v_N^2   z_N + 3 v_N :\!  z_N^2 \!: + :\!  z_N^3\!:\|_{L^1_\dl L^2_x}
\label{D5}
\end{align}

\noi
for any $\dl > 0$.

Next, observe that from its definition \eqref{eq:zN}, $z_N$ satisfies
\[z_N = \P_N z_N,\]
which implies that  we  have
\begin{align*}
\,:\!z_N^\l\!:\, = \P_{\l N}:\!z_N^\l\!:\,
\end{align*}

\noi
for $\l= 2,3$.
Hence, by H\"older's, Sobolev's and Bernstein's inequalities with the frequency support
property of the Wick powers, we obtain
\begin{align} 
\begin{split}
\|v_N^3 & + 3    v_N^2 z_N + 3  v_N :\!  z_N^2 \!:+ :\!  z_N^3\!:\|_{L^1_\dl L^2_x}\\ 
&\les \dl^{\tfrac12}\Big(\|v_N\|_{L^{\infty}_\dl L^6_x}^3+\|v_N\|_{L^{\infty}_\dl L^4_x}^2\|z_N\|_{L^{2}_\dl L^{\infty}_x}\\
& \hphantom{X}
+\|v_N\|_{L^{\infty}_\dl L^2_x}\|:\!z_N^2\!:\|_{L^{2}_\dl L^{\infty}_x}+\|:\!z_N^3\!:\|_{L^{2}_\dl L^{\infty}_x}
\Big) \\ 
&\les \dl ^{\tfrac12}\Big(\|v_N\|_{X^1(\dl )}^3+N^{\eps}\|v_N\|_{X^1(\dl )}^2\|z_N\|_{L^{2}_\dl W^{-\eps,\infty}_x}\\
&\hphantom{X}
+N^{\eps}\|v_N\|_{X^1(\dl )}\|:\!z_N^2\!:\|_{L^{2}_\dl W^{-\eps,\infty}_x}+N^{\eps}\|:\!z_N^3\!:\|_{L^{2}_\dl W^{-\eps,\infty}_x}\Big)
\end{split}
\label{D6}
\end{align}

\noi
for $0 < \dl \leq 1$.

Given a large target time $T > 0$, 
$M \geq 1$, and $N \in \N$, 
we set 
\begin{align*}
\O_{N,T}^M 
= \Big\{\o \in \O: \|:\!z_N^\l\!:\|_{L^{2}_TW^{-\eps,\infty}_x}\leq M,~\l=1,2,3\Big\}.
\end{align*}

\noi
Then, for any $\o \in \O_{N,T}^M$, 
it follows from \eqref{D5} and \eqref{D6} that 
\begin{align*}
\|\G_N(v_N)\|_{X^1(\dl)}
&\leq C_0\|(v_0,v_1)\|_{\H^1} + C_1\dl^{\tfrac12}\Big(\|v_N\|_{X^1(\dl)}^3\\&\quad +N^{\eps}M\|v_N\|_{X^1(\dl)}^2+N^{\eps}M\|v_N\|_{X^1(\dl)}+N^{\eps}M\Big).
\end{align*}
In particular, if we set
\[R = 1+2C_0\|(v_0,v_1)\|_{\H^1}
\qquad \text{and}
\qquad 
\dl_{N, R} = (100C_1R^2N^{\eps}M)^{-2},\]

\noi
then
we see that $\G_N$ maps the ball 
$B_{N,R} = \{v_N:~\|v_N\|_{X^1(\dl_{N,R})}\leq R\}$ into itself.
Furthermore, by a similar computation, 
we can show that $\G_N$ is 
 a contraction on $B_{N, R}$,
 establishing existence of a unique solution $v_N \in B_{N,R}$ to \eqref{SdNLW2}.
 A standard continuity argument allows us to extend  the uniqueness to the whole space $X^1(\dl_{N,R})$.

It follows from 
 \eqref{tail-z} and  Chebyshev's inequality 
 (as in \cite[Lemma 3]{BOP1}\footnote{Lemma 2.2 in the arXiv version. 
 See also Lemma 4.5 in \cite{Tzvet}.})
that 
\begin{align}P\Big( \|:\!z_N^\l\!:\|_{L^{2}_TW^{-\eps,\infty}_x}>M\Big) 
 \leq Ce^{-cM^{\frac2\l}T^{-\frac1{\l }}\ld_N^{\frac{\eps}{2\l}}}.
\label{D8}
 \end{align}

\noi
Then, defining $\O_{T}$ by 
\begin{align*}
\O_{T} = \bigcap_{N \in \N} \O_{N,T}
= \bigcap_{N \in \N} \bigcup_{M\in\N}\O_{N,T}^M,
\end{align*}

\noi
it follows from \eqref{D8} that  $\O_{N, T}$ has probability 1
and therefore $\O_T$ is a set of full probability.
Furthermore, given $ \o \in \O_T$
and $N \in \N$, there exists $M = M(N) \in \N$
such that $\o \in \O^M_{N, T}$
and thus the argument above shows local existence of a unique solution $v_N$
to \eqref{SdNLW2}
on the time interval $[0, \dl_{N,R}(\o)]$.
 This proves almost sure local well-posedness of~\eqref{SdNLW2}.
Note that  we have the following blowup alternative for the maximal time $T_{N, R}^*
= T_{N, R}^*(\o)$
of existence; 
given $\o \in \O_T$ and $N \in \N$, 
we have either 
\begin{align}
\lim_{t\nearrow T_{N,R}^*}\|v_N\|_{X^1(t)} = \infty 
\qquad \text{or} \qquad T_{N,R}^* \geq T.
\label{D9}
\end{align}

Next, we prove almost sure well-posedness
on the entire time interval $[0, T]$.
We follow the argument introduced by Burq and Tzvetkov \cite{BT-JEMS}
in the context of random data global well-posedness
of the cubic NLW on $\T^3$.
In view of the blowup alternative \eqref{D9}, 
it suffices to show that, for each $\o \in \O_T$, 
 the $\H^1$-norm of $(v_N(t), \dt v_N(t))$
remains finite on $[0, T]$.

Define the energy $\EE_N(v)$ by setting 
\[\EE_N(v)(t)=\frac{1}{2}\|\nb v(t)\|_{L^2}^2 + \frac12\|\dt v(t)\|_{L^2}^2 + \frac{1}{4}\|v(t)\|_{L^4}^4
+ \frac{1}{2} \ld_N \|v(t)\|_{L^2}^2.\]

\noi
Then, for a solution $v_N$ to \eqref{SdNLW2}, 
we have 
\begin{align*}
\dt\EE(v_N)
& = -\int_{\T^2}\dt v_N \Big(\dt v_N 
+ 3   v_N^2 z_N + 3  v_N :\!  z_N^2 \!: + :\!  z_N^3\!: \Big)dx\\
& \les 
- \|\dt v_N\|_{L^2}^2  + 
N^\eps \|\dt v_N\|_{L^2}
\Big(  \|v_N\|_{L^4}^2\|z_N\|_{W^{-\eps,\infty}}\\
& \hphantom{X}+  \|v_N\|_{L^4}\|:\!z_N^2\!:\|_{W^{-\eps, \infty}}
 + \|:\!z_N^3\!:\|_{W^{-\eps,\infty}}\Big)\\
 \intertext{By Young's inequality,}
&  \les \Big(1 +  N^{\eps} \|z_N\|_{W^{-\eps,\infty}}\Big) \EE(v_N)
+ N^{4\eps} \|:\!z_N^2\!:\|_{W^{-\eps,\infty}}^4
+ N^{2\eps}\|:\!z_N^3\!:\|_{W^{-\eps,\infty}_x}^2.
\end{align*}

\noi
Then, it follows from 
Gronwall's inequality that
given $T> 0$ and $N, M \in \N$,  there exists a constant $C(N,T,M)>0$ such that for any $\o\in\O_{N,T}^M$,
we have
\[\|v_N\|_{X^1(T_{N,R}^*)}\les
\sup_{t\in [0,T_{N,R}^*)}
\EE(v_N)(t)\leq C(N,T,M)\EE(v_N)(0) < \infty.\]

\noi
Since the choices of $N$ and $M$ are arbitrary,
this implies that  $T_{N,R}^*(\o) \geq T$ for any $\o \in \O_T$.
This completes the proof of Proposition \ref{PROP:GWP}.
\end{proof}

We are now ready to present a proof of Theorem \ref{THM:strong}.

\begin{proof}[Proof of Theorem~\ref{THM:strong}]
Let  $(v_0,v_1)\in\H^1(\T^2)$.
Fix $T>0$.
Given $N \in \N$, set 
\begin{align}
V_N = v_N - v_N^{\lin},
\label{E0}
\end{align}

\noi
where  $v_N^{\textup{lin}}$ is the linear solution 
defined in \eqref{lin1}.
Proposition~\ref{PROP:GWP} ensures 
that $V_N$ exists almost surely on the time interval $[0, T]$, 
where it  satisfies the Duhamel formulation.
In the following, we show that $V_N$
tends to 0
in $C([0,T];H^{1-\eps}(\T^2))$.

Fix $\eps > 0$ sufficiently small.
Then, from  Lemma \ref{LEM:biest}, we have
\begin{align}
\begin{split}
\|   z_N  (V_N+v_N^{\textup{lin}})^2 \|_{L_T^1 H_x^{-\eps}}
&\les T^{\frac{1}{2}} \| z_N \|_{L_T^2 W_x^{-\eps,\frac{1}{\eps}}} 
\big\| \jb{\nb}^{\eps} \big[(V_N+v_N^{\textup{lin}})^2\big] \big\|_{L_T^{\infty} L_x^{\frac{2}{1-\eps}}} \\
&\les T^{\frac{1}{2}} \| z_N \|_{L_T^2 W_x^{-\eps,\infty}} \| \jb{\nb}^{\eps} (V_N+v_N^{\textup{lin}}) \|_{L_T^{\infty} L_x^{\frac{4}{1-\eps}}}^2\\&
\les T^{\frac{1}{2}} \| z_N \|_{L_T^2 W_x^{-\eps,\infty}} \| V_N+v_N^{\textup{lin}} \|_{C_TH_x^{1-\eps}}^2.
\end{split}
\label{E1}
\end{align}

\noi
Similarly, we have
\begin{align}
\begin{split}
\| :\!  z_N^2 \!: (V_N+v_N^{\textup{lin}}) \|_{L_T^1 H_x^{-\eps}}
&\les T^{\frac{1}{2}} \| :\!  z_N^2 \!: \|_{L_T^2 W_x^{-\eps,\frac{1}{\eps}}} \| \jb{\nb}^{\eps} (V_N+v_N^{\textup{lin}})\|_{L_T^{\infty} L_x^{\frac{2}{1-\eps}}} \\
&\les T^{\frac{1}{2}} \| :\!  z_N^2 \!: \|_{L_T^2 W_x^{-\eps,\infty}} \| V_N+v_N^{\textup{lin}} \|_{C_TH_x^{1-\eps}}. 
\end{split}
\label{E2}
\end{align}

\noi
By  H\"older's inequality, we have
\begin{equation}
\| :\!  z_N^3 \!: \|_{L_T^1  H_x^{-\eps}}
\le T^{\frac{1}{2}} \| :\!  z_N^3 \!: \|_{L_T^2  H_x^{-\eps}}. 
\label{E3}
\end{equation}

In order to  estimate the term $(V_N+v_N^{\textup{lin}})^3$, 
we  use \eqref{eq:Dn} by assuming that $N \gg 1$
such that  $\D_N$ is bounded from $L^2(\T^2)$ to $C([0,T];H^{1-\eps}(\T^2))$
with norm less than $\ld_N^{-\frac \eps2}$.\footnote{Note that  this gain of $\lambda_N^{-\frac \eps2}$ is not true for $\dt\D_N$.
This is the reason we only prove convergence of $V_N$ in $C([0,T];H^{1-\eps}(\T^2))$ instead of the smaller space $X^{1-\eps}(T)$.} 
Then,  H\"older's and Sobolev's inequalities yield
\begin{equation}
\begin{split} 
\bigg\|\int_0^t\D_N(t-t')\big[(V_N+v_N^{\textup{lin}})^3(t')\big]dt'\bigg\|_{C_T H_x^{1-\eps}}&\les \ld_N^{-\frac \eps2}\| (V_N+v_N^{\textup{lin}})^3 \|_{L_T^1 L^2_x}
\\&\le \ld_N^{-\frac \eps2} T \| V_N+v_N^{\textup{lin}} \|_{L_T^{\infty} L_x^6}^3
\\&\les \ld_N^{-\frac \eps2} T \| V_N+v_N^{\textup{lin}} \|_{C_T H_x^{1-\eps}}^3.
\end{split}
\label{E4}
\end{equation}

\noi
Moreover, from \eqref{D4}, we have 
\begin{align}
\|v_N^{\textup{lin}}\|_{C_T H_x^{1-\eps}}\les \|(v_0,v_1)\|_{\H^{1-\eps}},
\label{E5}
\end{align}
uniformly in $N$.

From \eqref{E0}, we have 
\[V_N=\G_N(V_N+v_N^{\textup{lin}})-v_N^{\textup{lin}}\] 

\noi
where $\G_N$ is as in \eqref{D4a}.
Then, putting \eqref{E1} - \eqref{E5}  together
along with 
 the boundedness of $\D_N$ from $H^{-\eps}(\T^2)$ to $C([0,T];H^{1-\eps}(\T^2))$, 
we obtain
\begin{align}
\| V_N \|_{C_T H_x^{1-\eps}}
&\les   \ld_N^{-\frac \eps2}T \big(\| V_N \|_{C_T H_x^{1-\eps}}+\|(v_0,v_1)\|_{\H^1}\big)^3 \notag \\
&\quad + T^\frac{1}{2}\Big(\| z_N \|_{L_T^2 W_x^{-\eps,\infty}} 
\big(\| V_N \|_{C_T H_x^{1-\eps}}+\|(v_0,v_1)\|_{\H^1}\big)^2 \label{GN}
\\
&\quad + \| :\!  z_N^2 \!: \|_{L_T^2 W_x^{-\eps,\infty}} \big(\| V_N \|_{C_T H_x^{1-\eps}}+\|(v_0,v_1)\|_{\H^1}\big) + \| :\!  z_N^3 \!: \|_{L_T^2  H_x^{-\eps}} \Big).\notag 
\end{align}

As in  \cite{HRW},  we introduce a sequence of stopping times
\begin{align}
\tau_N^{\rho} = T\wedge \inf\big\{\tau \geq 0:~\|V_N\|_{C_{\tau}H^{1-\eps}_x}>\rho\big\}
\label{E7}
\end{align}

\noi
for $\rho>0$.
Then, 
the bound \eqref{GN} and the continuity in time of $V_N$ (with values in $H^{1-\eps}(\T^2)$) show that for any $\rho>0$,
\[\begin{split}
\| V_N \|_{C_{\tau_N^\rho}H^{1-\eps}_x}
&\les   \ld_N^{-\frac \eps2} T(\rho+\|(v_0,v_1)\|_{\H^1})^3 \\&
\quad + T^\frac{1}{2}\Big(\| z_N \|_{L_T^2 W_x^{-\eps,\infty}} \big(\rho+\|(v_0,v_1)\|_{\H^1}\big)^2 \\
&\quad + \| :\!  z_N^2 \!: \|_{L_T^2 W_x^{-\eps,\infty}} \big(\rho+\|(v_0,v_1)\|_{\H^1}\big) + \| :\!  z_N^3 \!: \|_{L_T^2  H_x^{-\eps}} \Big).
\end{split}\]

\noi
Taking an expectation, we conclude from Lemma~\ref{LEM:CN} and Proposition~\ref{PROP:estzN} that
\[\lim_{N\rightarrow\infty}\E\big[\|V_N\|_{C_{\tau_N^\rho}H^{1-\eps}_x}\big]=0.\]

\noi
When $\tau_N^\rho < T$, 
it follows from the definition \eqref{E7} of  $\tau_N^{\rho}$
and 
the continuity in time of $V_N$ 
that 
\[ \|V_N\|_{C_{\tau_N^\rho}H^{1-\eps}_x} = \rho.\]

\noi
Hence, we obtain
\[P(\tau_N^\rho<T) \leq  \frac{1}{\rho}
\E\Big[\|V_N\|_{C_{\tau_N^\rho}H^{1-\eps}_x}\ind_{[0,T)}(\tau_N^\rho)\Big]\leq \frac{1}{\rho}\E\big[\|V_N\|_{C_{\tau_N^\rho}H^{1-\eps}_x}\big]
\too 0\] 

\noi
as $N \to \infty$.
This in turn implies that, for any $\rho > 0$,  
we have 
\begin{align}
P(\|V_N\|_{C_T H_x^{1-\eps}}>\rho)= P(\tau_N^\rho< T)
\too 0 
\label{E8}
\end{align}
as $N \to \infty$. 

Finally, recalling the decompositions $u_N = z_N +v_N$
and \eqref{E0}
and applying  the embedding $C([0,T];H^{s}(\T^2))\subset H^{-\eps}([0,T];H^{s}(\T^2))$ for any $s\in\R$, 
we obtain
\begin{align*}
\|u_N\|_{H^{-\eps}_TH^{-\eps}_x} 
& = \|z_N+v_N^{\lin}+V_N\|_{H^{-\eps}_TH^{-\eps}_x}\\
& \les \|z_N\|_{C_TH^{-\eps}_x}+\|v_N^{\lin}\|_{H^{-\eps}_TH^{1-\eps}_x}+\|V_N\|_{C_TH^{1-\eps}_x}.
\end{align*}

\noi
The first and third terms on the right-hand side converge to 0 in probability by 
Proposition~\ref{PROP:estzN}\,(ii)
and \eqref{E8}, respectively, 
 while the second term  on the right-hand side 
converges to 0   by Lemma~\ref{LEM:LWN}.
 This completes the proof of Theorem~\ref{THM:strong}.
\end{proof}

\section{Deterministic limit in the weak noise case}

In this section, we work in the weak noise case:
\begin{align}
\lim_{N \to \infty} \al_N^2 \log N = \kk^2 \in [0, \infty)
\label{f0}
\end{align}

\noi
 and present a proof of Theorem~\ref{THM:weak}.
First, note that by setting $\ld_N = 1$, 
the results in Section \ref{SEC:2} hold in this case.
In particular,  the linear stochastic  wave equation 
\eqref{dSLWN2} admits a unique invariant measure, still denoted by $\mu_N$.

Let $(z_{0,N}^{\o},z_{1,N}^{\o})$ be as in \eqref{IV3}, distributed by the Gaussian measure $\mu_N$.
Denote by  $z_N$ the solution to \eqref{dSLWN2} with  $(z_N, \dt z_N)|_{t = 0} = (z_{0,N}^{\o},z_{1,N}^{\o})$.
Then, by invariance of $\mu_N$, the variance of $z_N(t)$ is given by
\begin{align}
\s_N \stackrel{\text{def}}{=} \frac{\al_N^2}{8\pi^2}\sum_{|n| \le N} \frac{1}{\jb{n}^2}.
\label{f1}
\end{align}

\noi
We now define 
the Wick powers $:\!z_N^\l\!:$ as in \eqref{Wick1}
with this new variance $\s_N$ defined in \eqref{f1}.
Note that  from  \eqref{f1} with \eqref{f0} and Lemma \ref{LEM:log}, we have
\begin{equation}\label{eq:dlN}
\lim_{N \to \infty} \s_N = \frac{1}{4\pi} \kk^2.
\end{equation}

As in the proof of Theorem \ref{THM:strong}, 
we proceed with the Da Prato-Debussche trick.
Namely, write the solution $u_N$ to \eqref{NLW2} as $u_N = v_N+z_N$.
Then, the residual term $v_N$ satisfies
\begin{equation}\label{SdNLW3}
\begin{cases}
\L v_N+ (3\s_N-1) (v_N+z_N) + v_N^3 + 3  v_N^2  z_N + 3  v_N :\!  z_N^2 \!: + :\!  z_N^3\!:\, =0 \\
(v_N, \dt v_N)|_{t = 0} = (v_0,v_1), 
\end{cases}
\end{equation}

\noi
where 
 $\L = \dt^2-\Dl +\dt +1$ is as in \eqref{C2a}.

Proceeding as in the proof of 
Proposition  \ref{PROP:estzN}, 
we obtain  the following lemma
on the regularity and decay properties of the Wick powers
$:\!z_N^\l\!:$.

\begin{lemma}\label{LEM:estzN3}
Let $\l \in \N$. Given any finite  $p,q \geq 1$,  $T>0$, and $\eps>0$,
we have\footnote
{In this case,  we also have convergence of $\dt z_N$ to 0 in $C([0,T];H^{-1-\eps}(\T^2))$ 
since the convergence to 0 comes from $\alpha_N\to  0$, 
 not from a gain of a negative power of $\ld_N$.}
 \[
\lim_{N \to \infty} \E \Big[ \| :\! z_N^\l (t)\!: \|_{L^q_T W_x^{-\eps,\infty}}^p \Big] =0
 \qquad
 \text{and}
 \qquad 
\lim_{N \to \infty} \E \Big[\| z_N (t) \|_{X^{-\eps}(T)}^p \Big] = 0, 
\]

\noi
 where $X^s(T)$ is as in \eqref{X}.
\end{lemma}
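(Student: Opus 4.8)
The plan is to adapt the proof of Proposition~\ref{PROP:estzN}, with the crucial difference that in the weak noise regime the convergence is driven by the decay of $\al_N$ rather than by a gain of a negative power of $\ld_N$ (since here $\ld_N = 1$ is fixed). First I would establish the first limit, concerning the spatial regularity of the Wick powers. Fixing $t \in \R_+$ and $x, y \in \T^2$, I would compute the covariance $\E\big[:\!z_N^\l(t,x)\!:\,:\!z_N^\l(t,y)\!:\big]$ by Lemma~\ref{LEM:W1}, exactly as in \eqref{tail-z}, obtaining (after applying $\jb{\nb_x}^{-\eps}$, $\jb{\nb_y}^{-\eps}$ and setting $x = y$) a bound of the form
\[
\E\Big[|\jb{\nb}^{-\eps}:\!z_N^\l(t,x)\!:|^2\Big]
\les \al_N^{2\l}\sum_{n_1,\dots,n_\l \in \Z^2_N}\bigg(\prod_{j=1}^\l \frac{1}{\jb{n_j}^2}\bigg)\jb{n_1+\cdots+n_\l}^{-2\eps}
\les \al_N^{2\l}(\log N)^{\l},
\]
where the logarithmic factors come from the Fourier series of $z_N$ with $\ld_N = 1$. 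By the Wiener chaos estimate (Lemma~\ref{LEM:hyp}) and Minkowski's integral inequality as in \eqref{tail-z}, this yields $\big\|\,\|:\!z_N^\l\!:\|_{L^q_T W_x^{-\eps,r}}\big\|_{L^p(\O)} \les_{p,\l} T^{1/q}\big(\al_N^2 \log N\big)^{\l/2}$, and Sobolev embedding upgrades $W^{-\eps,r}$ to $W^{-\eps,\infty}$ for $r$ large. Since $\al_N^2 \log N \to \kk^2 \in [0,\infty)$ by \eqref{f0}, the right-hand side stays bounded; to get that it tends to zero I would exploit that $\al_N^2 = (\al_N^2 \log N)/\log N \to 0$, so for each fixed $\l \ge 1$ the factor $(\al_N^2 \log N)^{\l/2}$ is bounded while an extra power $\al_N^{2(\l-1)}\to 0$ (for $\l \ge 2$), and for $\l = 1$ one notes $\al_N^2\log N$ itself need not vanish---so the genuine decay comes from writing $\al_N^{2\l}(\log N)^\l = (\al_N^2)(\al_N^2\log N)^{\l-1}(\log N) \cdot \al_N^{-2}\cdots$; cleaner is to observe $\al_N^{2\l}(\log N)^\l = (\al_N^2 \log N)^\l \to \kk^{2\l}$ is merely bounded, so I must extract the vanishing more carefully.

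The subtle point, and the main obstacle, is exactly this $\l = 1$ borderline: $\al_N^{2}\log N$ converges to $\kk^2$ which may be strictly positive, so the naive bound $(\al_N^2 \log N)^{\l/2}$ does \emph{not} tend to zero. The resolution is that the correct power-counting gives an \emph{extra} factor of $\al_N^2$ (not $\al_N^2 \log N$) that is genuinely lost to decay. Indeed, the sum $\sum_{n_1,\dots,n_\l}\big(\prod \jb{n_j}^{-2}\big)\jb{n_1+\cdots+n_\l}^{-2\eps}$ is bounded by $C(\log N)^{\l-1}$ rather than $(\log N)^\l$, because the constraint imposed by the factor $\jb{n_1+\cdots+n_\l}^{-2\eps}$ together with the decay of one $\jb{n_j}^{-2+\eps}$ factor removes one logarithm (this is the analogue of the $\jb{n}_N \geq \ld_N^{\eps/2}\jb{n}^{1-\eps}$ trick used in \eqref{tail-z}, here replaced by the $\eps$-room in one summation variable). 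Thus the true bound is $\al_N^{2\l}(\log N)^{\l-1} = \al_N^2\,(\al_N^2\log N)^{\l-1} \to 0 \cdot \kk^{2(\l-1)} = 0$, using $\al_N \to 0$ from \eqref{f0} together with $\al_N^2\log N \to \kk^2$. This is the one place the argument genuinely differs from the strong noise case and must be handled with care.

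For the second limit, concerning $\|z_N(t)\|_{X^{-\eps}(T)}$, I would follow Proposition~\ref{PROP:estzN}(ii) verbatim with $\ld_N = 1$, splitting $z_N = z_N^{\mathrm{hom}} + \al_N \Psi_N$ via \eqref{eq:zN}. The homogeneous part is controlled by the $\H^{-\eps}$-norms of $(z_{0,N}^\o, z_{1,N}^\o)$ in \eqref{IV3}, giving $\big(\sum_{|n|\le N}\al_N^2\jb{n}^{-2\eps}\jb{n}^{-2}\big)^{1/2} \les \al_N \to 0$, and similarly for the velocity component using $D_N(n)\sim\jb{n}$; the stochastic convolution $\Psi_N$ is handled by the factorization method \eqref{fa1}--\eqref{fa4}, now producing a bound $\les \al_N$ in place of $\ld_N^{-\eps/4}$. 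Since $\al_N \to 0$, all terms vanish, and because the decay comes uniformly from $\al_N \to 0$ rather than from smoothing, the same estimate applies to $\dt z_N$ in $C_T H^{-1-\eps}_x$, as remarked in the footnote. Assembling the $C_T H^{-\eps}_x$ and $C^1_T H^{-1-\eps}_x$ bounds gives convergence in $X^{-\eps}(T)$, completing the proof.
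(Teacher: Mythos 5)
Your proposal is correct and follows essentially the paper's route: the paper proves this lemma precisely by rerunning Proposition~\ref{PROP:estzN} with $\ld_N = 1$ and observing that the decay now comes from $\al_N \to 0$, and your treatment of the second limit (factorization method with $\ld_N = 1$, all bounds now carrying a factor $\al_N$ in place of $\ld_N^{-\eps/4}$, with the same reasoning giving $\dt z_N \to 0$ in $C_TH^{-1-\eps}_x$) matches what the paper intends, including its footnote. The one place you deviate is the lattice-sum estimate, where your power counting is needlessly pessimistic: the sum $\sum_{n_1,\dots,n_\l \in \Z^2_N}\big(\prod_{j=1}^\l \jb{n_j}^{-2}\big)\jb{n_1+\cdots+n_\l}^{-2\eps}$ is in fact bounded \emph{uniformly in $N$}, with no logarithmic growth at all. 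This is already implicit in the strong-noise computation: there one weight is even weakened to $\jb{n_\l}^{-2+\eps}$ (to gain $\ld_N^{-\eps/2}$) and the resulting sum is still $O(1)$, because the factor $\jb{n_1+\cdots+n_\l}^{-2\eps}$ makes the iterated convolution convergent (in $d=2$ one has $\sum_{m}\jb{m}^{-2}\jb{m+n}^{-2\eps}\les \jb{n}^{-2\eps}\log\jb{n}\les\jb{n}^{-\eps}$, and one iterates in $n_1,\dots,n_{\l-1}$, closing with $\sum_{n_\l}\jb{n_\l}^{-2-\eps'}<\infty$). Hence the paper's estimate is simply $\E\big[|\jb{\nb}^{-\eps}:\!z_N^\l(t,x)\!:|^2\big]\les_\l \al_N^{2\l}\to 0$, using only $\al_N\to 0$; your ``$\l=1$ borderline'' and the compensation via $\al_N^2\log N\to\kk^2$ are unnecessary, and the phrase ``removes one logarithm'' misdescribes the mechanism---the $\eps$-room removes all of them. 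That said, your claimed bound $C(\log N)^{\l-1}$ holds a fortiori from the uniform bound, and your final estimate $\al_N^2(\al_N^2\log N)^{\l-1}\to 0$ is valid for every $\l\in\N$, so the argument does close: there is no gap, only a suboptimal intermediate step and a misattributed source of the decay.
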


Lemma \ref{LEM:estzN3} follows as in Proposition  \ref{PROP:estzN}
once we note the following;
under \eqref{f0}, we have  $\al_N \to 0$ as $N \to \infty$, 
which yields
\begin{align*}
\E \big[ |\jb{\nb}^{-\eps} :\! z_N^\l (t, x))\!:|^2 \big]
&= \l ! \sum_{n_1, \dots, n_{\l} \in \Z^2_N} \bigg( \prod_{j=1}^{\l} \frac{\al_N^2}{\jb{n}^2} \bigg) \jb{n_1+\dots+n_{\l}}^{-2\eps}
\les_{\l} \al_N^{2\l}\\
& \too 0,
\end{align*}

\noi
as $N \to \infty$.

By arguing  as in the proof of Proposition~\ref{PROP:GWP}, 
we can  show that the equation \eqref{SdNLW3} is almost surely globally well-posed in $\H^1(\T^2)$ in the sense that for any $T>0$,  there exists a set $\O_T$ of full probability  such that for any $\o\in \O_{T}$ 
and $N \in \N$, there exists a unique solution $v_N \in X^1(T)$ to \eqref{SdNLW3},  
satisfying the bound
\begin{equation*} \label{est:vn}
\| v_N \|_{X^1(T)} \le C(N,T,\o) \| (v_0,v_1) \|_{\H^1}.
\end{equation*}

Our main goal in this section is to prove the following proposition.

\begin{proposition}\label{PROP:decay}

Let $v_N$ be the solution to \eqref{SdNLW3}.
Then, given any $T,\eps>0$, 
$v_N$ converges in probability to 
the  solution $w_{\kk}$ to \eqref{dNLWw}
in $X^{1-\eps}(T)$.
\end{proposition}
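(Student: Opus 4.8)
The plan is to estimate the difference $V_N \stackrel{\textup{def}}{=} v_N - w_\kk$ directly in $X^{1-\eps}(T)$, where $w_\kk$ solves the deterministic limiting equation \eqref{dNLWw}, exploiting that both $v_N$ and $w_\kk$ carry the \emph{same} deterministic initial data $(v_0,v_1)$ so that the linear contributions cancel. Writing the Duhamel formulations for $v_N$ (from \eqref{SdNLW3}) and for $w_\kk$ and subtracting, the free-evolution terms drop out completely, and $V_N$ satisfies a Duhamel-type equation driven by two kinds of source terms: first, the discrepancy between the two linear operators and mass coefficients, namely the difference between the propagator $\D_N$ for $\L = \dt^2 - \Dl + \dt + 1$ and the propagator for $\dt^2 - \Dl + \dt + 1 + \tfrac{3}{4\pi}\kk^2$, together with the constant mismatch $(3\s_N - 1) - \tfrac{3}{4\pi}\kk^2$, which tends to $0$ by \eqref{eq:dlN}; and second, the stochastic remainders $3v_N^2 z_N$, $3v_N :\!z_N^2\!:$, and $:\!z_N^3\!:$, all of which carry factors that vanish as $N\to\infty$ by Lemma \ref{LEM:estzN3}. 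Unlike the strong noise case, here $\ld_N \equiv 1$ is fixed, so there is no oscillatory gain; instead the smallness comes entirely from $\al_N \to 0$.

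First I would set up the a priori estimate by bounding $\|V_N\|_{X^{1-\eps}(T)}$ through the difference of Duhamel integrals. The cubic terms split as $v_N^3 - w_\kk^3 = (V_N + w_\kk)^3 - w_\kk^3$, which is estimated using the multiplicative bound in Lemma \ref{LEM:biest}\,(i) together with the algebra property of $H^{1-\eps}(\T^2)$ for $\eps$ small (so that $H^{1-\eps}$ embeds into $L^\infty$ in two dimensions via Sobolev), producing a factor $\|V_N\|_{X^{1-\eps}(T)}$ times a polynomial in $\|V_N\|_{X^{1-\eps}(T)}$ and $\|w_\kk\|_{X^{1-\eps}(T)}$. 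The mass-coefficient discrepancy contributes a term controlled by $|(3\s_N - 1) - \tfrac{3}{4\pi}\kk^2|\cdot(\|V_N\|_{X^{1-\eps}(T)} + \|w_\kk\|_{X^{1-\eps}(T)})$, going to $0$ by \eqref{eq:dlN}, while the stochastic source terms are estimated exactly as in \eqref{E1}--\eqref{E3}, now yielding random prefactors that converge to $0$ in expectation by Lemma \ref{LEM:estzN3}. Gathering these, I would obtain an inequality of the schematic form
\begin{equation*}
\|V_N\|_{X^{1-\eps}(\tau)} \les o_N(1) + C\big(\|w_\kk\|_{X^{1-\eps}(T)}\big)\,\tau^{\frac12}\|V_N\|_{X^{1-\eps}(\tau)} + \text{(higher order in } \|V_N\|\text{)},
\end{equation*}
valid on subintervals $[0,\tau]$, where $o_N(1)$ collects all the terms that are small by $\al_N \to 0$ and \eqref{eq:dlN}.

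To pass from this local-in-$\tau$ bound to convergence on the full interval $[0,T]$, I would introduce a stopping-time/bootstrap argument paralleling \eqref{E7}--\eqref{E8}: define $\tau_N^\rho = T \wedge \inf\{\tau\ge 0: \|V_N\|_{X^{1-\eps}(\tau)} > \rho\}$, run the a priori estimate up to $\tau_N^\rho$, take expectations, and deduce $\E[\|V_N\|_{X^{1-\eps}(\tau_N^\rho)}] \to 0$, hence $P(\|V_N\|_{X^{1-\eps}(T)} > \rho) \to 0$ for every $\rho > 0$, which is convergence in probability. A subtlety is that here I must control $V_N$ in the full space $X^{1-\eps}(T)$ (including the $C^1$-in-time component), not merely in $C_T H^{1-\eps}_x$; this is available because the mass mismatch and the stochastic terms are all genuinely small in $L^1_T L^2_x$-type norms—there is no loss from a missing $\ld_N^{-\eps/2}$ gain as in the strong-noise footnote, precisely because the damped propagator $\D_N$ maps $L^2$ into $X^1$ uniformly via \eqref{D4}.

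The main obstacle I anticipate is handling the $N$-dependence hidden in the global-in-time solution theory: the bound $\|v_N\|_{X^1(T)} \le C(N,T,\o)\|(v_0,v_1)\|_{\H^1}$ quoted after \eqref{SdNLW3} is \emph{not} uniform in $N$, so I cannot simply insert $\|v_N\|$ into the nonlinear estimates and expect smallness. The resolution—as in the strong-noise proof—is that the limit $w_\kk$ is a \emph{known, deterministic, $N$-independent} solution with finite $X^{1-\eps}(T)$-norm; the stopping-time argument confines $\|V_N\| = \|v_N - w_\kk\|$ to a ball of radius $\rho$ on $[0,\tau_N^\rho]$, which effectively reduces the analysis to a small-perturbation regime around $w_\kk$ where all constants are controlled by $\|w_\kk\|_{X^{1-\eps}(T)}$ and $\rho$ alone, uniformly in $N$. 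Thus the genuine difficulty is not an estimate but the logical structure of closing the bootstrap before the (possibly $N$-dependent) blow-up time of $v_N$ is reached; the blowup alternative \eqref{D9} guarantees that as long as $\|V_N\|_{X^{1-\eps}}$ stays bounded, the solution persists, so the stopping-time mechanism is self-consistent.
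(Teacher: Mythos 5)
Your overall architecture coincides with the paper's: you subtract the deterministic solution, $V_N = v_N - w_\kk$, so the deterministic data cancel and $V_N$ starts from zero; you absorb the mass mismatch using \eqref{eq:dlN} (the paper packages this as $\be_N = 3(\s_N - \frac{\kk^2}{4\pi})$ in \eqref{f0a} and keeps the limiting operator $\mathcal{L}^\kk$, rather than comparing propagators, but that is only bookkeeping); and you invoke Lemma \ref{LEM:estzN3} for the stochastic sources, correctly noting that all smallness now comes from $\al_N \to 0$ so that the $C^1$-in-time component of $X^{1-\eps}(T)$ is recoverable. Up to the a priori estimate, this is the paper's proof.

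The genuine gap is in the globalization step. Your schematic inequality contains the term $C(\|w_\kk\|_{X^1(T)})\,\tau^{\frac12}\|V_N\|_{X^{1-\eps}(\tau)}$, generated by $3V_N w_\kk^2$ (and $3V_N^2 w_\kk$), and this term carries \emph{no} smallness in $N$; it can be absorbed into the left-hand side only when $\tau \le T_0(\rho, R_\kk)$ is small. Consequently, ``running the a priori estimate up to $\tau_N^\rho$'' in a single stroke, as in \eqref{E7}--\eqref{E8}, fails for $T > T_0$: at the stopping time the inequality reads $\rho \les o_N(1) + C T^{\frac12}\rho(\rho + R_\kk)^2$, which yields no information once $CT^{\frac12}(\rho+R_\kk)^2 \gtrsim 1$. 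The strong-noise argument you cite works precisely because every term on the right-hand side of \eqref{GN} carries a factor vanishing as $N \to \infty$; here the deterministic cross terms do not. The paper supplies the missing mechanism: Lemma \ref{LEM:wN} gives the contraction only on intervals of length $T_0(\rho, R_\kk)$ under the hypothesis \eqref{f3}, and the conclusion on $[0,T]$ is obtained by an induction over the subintervals $[t_j, t_{j+1}]$, $t_j = jT_0$, proving $P(t_j \le \tau_N^\rho < t_{j+1}) \to 0$ for each $j$ via \eqref{f4}--\eqref{f5}; the data at $t_j$ is controlled by $\|V_N\|_{X^{1-\eps}([t_{j-1},t_j])}$, and the resulting growth $C_0^j$ is harmless because the chain is anchored at $(V_N(0), \dt V_N(0)) = (0,0)$. (A time-pointwise Duhamel estimate combined with Gronwall's inequality on $[0,\tau_N^\rho]$, yielding a bound $o_N(1)e^{C(\rho,R_\kk)T}$, would serve the same purpose, but your write-up contains neither device.) Relatedly, your diagnosed ``main obstacle'' is misplaced: almost sure global existence of $v_N$ is already known from the analogue of Proposition \ref{PROP:GWP}, so the blow-up alternative is not where the difficulty sits --- the difficulty is exactly the iteration you omitted.
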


Once we have Proposition \ref{PROP:decay}, 
Theorem \ref{THM:weak} 
follows from the decomposition $u_N = z_N + v_N$
and the decay of $z_N$ to 0 in $X^{-\eps}(T)$ presented in Lemma \ref{LEM:estzN3}.
Hence, it remains to prove 
Proposition \ref{PROP:decay}.

\begin{proof}[Proof of Proposition \ref{PROP:decay}]
Fix $T> 0$. By proceeding as in the proof of  Proposition~\ref{PROP:GWP}, 
we can show that 
the deterministic equation \eqref{dNLWw} admits a unique global solution $w_{\kk}\in X^1(T)$, 
satisfying the energy bound 
\begin{equation} \label{est:w}
\| w_{\kk} \|_{X^1(T)} \le R_\kk \stackrel{\text{def}}{=} C_{\kk}(T) \| (v_0,v_1) \|_{\H^1}.
\end{equation}

Define $\be_N$ by setting
\begin{align}
\be_N = 3 \bigg(\s_N - \frac{\kk^2}{4\pi}\bigg).
\label{f0a} 
\end{align}

\noi
Then, 
we rewrite \eqref{SdNLW3} as
\begin{align*}
&\dt^2 v_N - \Dl v_N +\dt v_N + \frac3{4\pi} \kk^2 v_N + v_N^3 
+ \QQ_N(v_N) =0, 
\end{align*}

\noi
where $\QQ_N(v_N)$ is the ``error'' part given by 
\begin{align*}
\QQ_N(v_N) =  \be_N v_N + (3\s_N-1) z_N + 3 v_N^2 z_N + 3  v_N :\!  z_N^2 \!: + :\!  z_N^3 \!:.
\end{align*}

\noi
By setting $V_N = v_N - w_{\kk}$, we see that $V_N$ then solves
\begin{equation}
\begin{split}
\begin{cases}
\dt^2 V_N  - \Dl V_N +\dt V_N + \frac3{4\pi} \kk^2 V_N 
+ V_N^3 \\
\hphantom{XX}+3 V_N^2 w_{\kk} + 3 V_N w_{\kk}^2 
 + \QQ_N(V_N + w_\kk) = 0\\
(V_N,\dt V_N)\big|_{t=0}=(0,0).
\end{cases}
 \end{split}\label{eq:wN}
 \end{equation}

\noi
We first establish a good control on $V_N$
on short time intervals.
With a slight abuse of notations, 
we set 
\begin{equation*}
X^s(I)  \stackrel{\textup{def}}{=} C(I;H^s(\T^2))\cap C^1(I;H^{s-1}(\T^2))
\end{equation*}

\noi
for an interval $I \subset \R_+$.

 \begin{lemma}\label{LEM:wN}
Given $\kk$ as in \eqref{f0}, let $R_\kk$ be as in  \eqref{est:w}. 
Then, for any $\rho >0$ and small $\eps > 0$, there exist $T_0=T_0(\rho,R_\kk)$ and $C_0>0$ such that  if 
\begin{align}
\|V_N\|_{X^{1-\eps}([t_0, t_0 + \tau])}\leq\rho
\label{f3}
\end{align}

\noi
for some $t_0 \in [0, T)$ and $0 < \tau \leq T_0$
such that $t_0 + \tau \leq T$, 
then we have
\begin{equation}
\begin{split}
\| V_N \|_{X^{1-\eps}([t_0, t_0 + \tau])}
&\leq C_0\Big\{\|(V_N(t_0),\dt V_N(t_0))\|_{\H^{1-\eps}}+ \be_N\tau (\rho+R_\kk) \\
& \quad 
+ \tau\| z_N \|_{L_T^{\infty} H_x^{-\eps}}
+ \tau^{\tfrac12}\big(\| z_N \|_{L_T^2 W_x^{-\eps,\infty}}(\rho^2+R_\kk^2) \\ 
& \quad + \| :\!  z_N^2 \!: \|_{L_T^2 W_x^{-\eps,\infty}}(\rho+R_\kk) + \| :\!  z_N^3 \!: \|_{L_T^2 H_x^{-\eps}}
\big)\Big\}.
\end{split}\label{est:wNT0}
\end{equation}
 \end{lemma}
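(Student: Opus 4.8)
\emph{The plan} is to set up the mild (Duhamel) formulation of \eqref{eq:wN} on the subinterval $[t_0,t_0+\tau]$ and run a single a priori estimate, exploiting the one-derivative gain of the damped-wave propagator together with the product estimates of Lemma~\ref{LEM:biest} and the decay of the Wick powers from Lemma~\ref{LEM:estzN3}. Let $\D_\kk$ denote the propagator associated with the linear operator $\dt^2-\Dl+\dt+\frac{3}{4\pi}\kk^2$; as in \eqref{D4} it maps $H^{-\eps}_x$ into $H^{1-\eps}_x$ (a gain of one spatial derivative), while $\dt\D_\kk$ is bounded on $H^{-\eps}_x$ and $\D_\kk(0)=0$. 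Writing $V_N$ in its mild form,
\begin{align*}
V_N(t) &= \dt\D_\kk(t-t_0) V_N(t_0) + \D_\kk(t-t_0)\big(V_N(t_0)+\dt V_N(t_0)\big) \\
&\quad - \int_{t_0}^t \D_\kk(t-t')\, \NN_N(t')\, dt',
\end{align*}
where $\NN_N = V_N^3 + 3V_N^2 w_\kk + 3V_N w_\kk^2 + \QQ_N(V_N+w_\kk)$, the energy boundedness of the linear flow on $\H^{1-\eps}(\T^2)$ bounds the first two terms by $\|(V_N(t_0),\dt V_N(t_0))\|_{\H^{1-\eps}}$, the first term in \eqref{est:wNT0}. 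Since $\D_\kk$ gains a derivative (controlling the $C_\tau H^{1-\eps}_x$-component of the $X^{1-\eps}$-norm) and $\dt\D_\kk$ is bounded on $H^{-\eps}_x$ with $\D_\kk(0)=0$ (controlling the $C_\tau H^{-\eps}_x$-component), it then suffices to estimate $\NN_N$ in $L^1([t_0,t_0+\tau];H^{-\eps}_x)$.

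I would bound each stochastic and error term of $\NN_N$ exactly as in \eqref{E1}--\eqref{E3}, using the hypothesis \eqref{f3} (i.e.\ $\|V_N\|_{X^{1-\eps}([t_0,t_0+\tau])}\le\rho$) and the energy bound \eqref{est:w} (i.e.\ $\|w_\kk\|_{X^1(T)}\le R_\kk$) to replace norms of $V_N$ and $w_\kk$ by $\rho$ and $R_\kk$. As $3\s_N-1$ is bounded by \eqref{eq:dlN}, the term $(3\s_N-1)z_N$ contributes $\tau\|z_N\|_{L^\infty_T H^{-\eps}_x}$; Lemma~\ref{LEM:biest} gives $\tau^{1/2}\|z_N\|_{L^2_T W^{-\eps,\infty}_x}(\rho+R_\kk)^2$ for $3(V_N+w_\kk)^2 z_N$, then $\tau^{1/2}\|:\!z_N^2\!:\|_{L^2_T W^{-\eps,\infty}_x}(\rho+R_\kk)$ for $3(V_N+w_\kk):\!z_N^2\!:$, and $\tau^{1/2}\|:\!z_N^3\!:\|_{L^2_T H^{-\eps}_x}$ for $:\!z_N^3\!:$; finally $\be_N(V_N+w_\kk)$ contributes $\be_N\tau(\rho+R_\kk)$. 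The full power of $\tau$ comes from the terms placed in their $L^\infty_t$ norms, while the half power arises from H\"older in time when the stochastic factors are measured in $L^2_t$; note $(\rho+R_\kk)^2$ matches the stated $(\rho^2+R_\kk^2)$ up to a constant.

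The remaining purely deterministic terms $V_N^3+3V_N^2 w_\kk+3V_N w_\kk^2$ each carry at least one factor of $V_N$, so I factor one out and bound them by $C\tau(\rho+R_\kk)^2\|V_N\|_{X^{1-\eps}([t_0,t_0+\tau])}$. This is the one delicate point: rather than applying \eqref{f3} to \emph{all} factors (which would leave a spurious $\tau(\rho+R_\kk)^3$ term absent from \eqref{est:wNT0}), I keep one genuine factor of the $X^{1-\eps}$-norm and absorb the whole contribution into the left-hand side by choosing $T_0=T_0(\rho,R_\kk)$ small enough that $C\tau(\rho+R_\kk)^2\le\frac12$ for all $\tau\le T_0$. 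Collecting the bounds and relabeling the constant as $C_0$ then gives \eqref{est:wNT0}. The main obstacle is thus organizational rather than analytic: one must track the two components of the $X^{1-\eps}$-norm separately through $\D_\kk$ and $\dt\D_\kk$, distribute the powers of $\tau$ correctly, and arrange the absorption so that no deterministic nonlinear term survives in the final estimate.
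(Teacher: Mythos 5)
Your proof is correct and takes essentially the same route as the paper's: a Duhamel estimate on $[t_0,t_0+\tau]$ with the linear flow of $\dt^2-\Dl+\dt+\frac{3}{4\pi}\kk^2$ controlling the data at $t_0$ in $\H^{1-\eps}$, the stochastic and $\be_N$-terms bounded exactly as in \eqref{E1}--\eqref{E3} with $\rho$ and $R_\kk$ substituted via \eqref{f3} and \eqref{est:w}, and the purely deterministic cubic terms handled by retaining one factor of $\|V_N\|_{X^{1-\eps}}$ and choosing $T_0(\rho,R_\kk)$ small enough to absorb it into the left-hand side. The paper performs this identical absorption through its intermediate term $\tau(\rho^2+R_\kk^2)\|V_N\|_{X^{1-\eps}(I)}$, so there is no substantive difference.
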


 \begin{proof}
 Given $t_0 \in [0, T)$ and $ 0 < \tau  \leq T - t_0$, 
 set $I = [t_0, t_0 + \tau]$.
 By estimating the Duhamel formulation of \eqref{eq:wN} on $I$
 as in the previous section, we  have 
\begin{equation*}
\begin{split}
\| V_N \|_{X^{1-\eps}(I)}
& \les \|(V_N(t_0),\dt V_N(t_0))\|_{\H^{1-\eps}}\\
&\quad+ \tau\| V_N \|_{X^{1-\eps}(I)} \big(\| V_N \|_{X^{1-\eps}(I)}^2 
+ \| w_{\kk} \|_{C_TH^1_x}^2\big) \\
& \quad + \be_N \tau \big(\| V_N \|_{X^{1-\eps}(I)} + \| w_{\kk} \|_{C_T H^{-\eps}_x}\big) 
+ (3\s_N-1) \tau  \| z_N \|_{L_T^{\infty} H_x^{-\eps}} \\
& \quad + \tau^{\frac{1}{2}} \Big( \| z_N \|_{L_T^2 W_x^{-\eps,\infty}} 
(\| V_N \|_{X^{1-\eps}(I)}^2 + \| w_{\kk} \|_{C_TH^1_x}^2) \\
& \quad + \| :\!  z_N^2 \!: \|_{L_T^2 W_x^{-\eps,\infty}} (\| V_N \|_{X^{1-\eps}(I)} 
+ \| w_{\kk} \|_{C_TH^1_x}) + \| :\!  z_N^3 \!: \|_{L_T^2 H_x^{-\eps}} \Big),
\end{split}
\end{equation*}
where the first term comes from the contribution of the linear evolution 
associated with the operator
$\mathcal L^\kk = 
\dt^2   - \Dl  +\dt  + \frac3{4\pi} \kk^2$, 
starting from initial data $(V_N(t_0),\dt V_N(t_0))$.
Hence, from \eqref{est:w}
and \eqref{f3}, 
 we obtain
\begin{equation*}
\begin{split}
\| V_N \|_{X^{1-\eps}(I)}
& \les \|(V_N(t_0),\dt V_N(t_0))\|_{\H^{1-\eps}}
+\tau(\rho^2+R_\kk^2)\|V_N\|_{X^{1-\eps}(I)}\\
& \quad +\be_N\tau(\rho+R_\kk) + \tau\| z_N \|_{L_T^{\infty} H_x^{-\eps}}
 + \tau^{\tfrac12}\big(\| z_N \|_{L_T^2 W_x^{-\eps,\infty}}(\rho^2+R_\kk^2) \\ 
& \quad + \| :\!  z_N^2 \!: \|_{L_T^2 W_x^{-\eps,\infty}}(\rho+R_\kk) + \| :\!  z_N^3 \!: \|_{L_T^2 H_x^{-\eps}}
\big), 
\end{split}
\end{equation*}

\noi
where we used the boundedness of  $3\s_N-1$  in view of \eqref{eq:dlN}.\footnote
{The bound on $3\s_N - 1$ depends on the entire sequence $\{\al_N\}_{N \in \N}$
but this does not cause an issue since we work with a {\it fixed} sequence 
$\{\al_N\}_{N \in \N}$.}
Then, by choosing $T_0 = T_0(\rho, R_\kk)> 0$
sufficiently small, we obtain the desired bound~\eqref{est:wNT0}.
\end{proof}

We continue with the proof of Proposition \ref{PROP:decay}.
Fix small $\eps > 0$.
In the following, we proceed as in  the previous section
and   introduce a sequence of stopping times 
\begin{align}
\tau_N^\rho = T\wedge \inf\big\{\tau\geq 0 : \|V_N\|_{X^{1-\eps}(\tau)}>\rho\big\}
\label{f3a}
\end{align}

\noi
for $\rho > 0$.

Let $R_\kk$ and $T_0$ be as in \eqref{est:w} and
  Lemma~\ref{LEM:wN}, respectively.
Given  $j=0,...,\big[\frac{T}{T_0}\big] + 1$,
set  $t_j=jT_0$ for $0 \leq j \leq \big[\frac{T}{T_0}\big]$
and $t_{[\frac{T}{T_0}] + 1} = T$.\footnote
{If $T$ is a multiple of $T_0>0$, 
then we do not need to consider $j = \big[\frac{T}{T_0}\big] + 1$
and it suffices to prove \eqref{f4}
for all $j=0,...,\big[\frac{T}{T_0}\big] - 1$.
}
Then, our goal is to 
apply  Lemma~\ref{LEM:wN} iteratively 
and  show that 
\begin{align}
\lim_{N\rightarrow\infty}P(t_j\leq \tau_N^\rho <t_{j+1})=0,
\label{f4}
\end{align}

\noi
for all $j=0,...,\big[\frac{T}{T_0}\big]$.
Once we prove \eqref{f4}, 
we obtain 
\begin{align*}
P(\|V_N\|_{X^{1-\eps}(T)}>\rho)= P(\tau_N^\rho< T)
\leq \sum_{j = 0}^{\big[\frac{T}{T_0}\big]} 
P(t_j\leq \tau_N^\rho <t_{j+1})
\too 0 
\end{align*}
as $N \to \infty$.

From  the definition \eqref{f3a} of $\tau_N^\rho$, 
the  continuity in time of $(V_N, \dt V_N)$ (with values in $\H^{1-\eps}(\T^2)$), 
and applying 
Lemma~\ref{LEM:wN}
along with  Lemma~\ref{LEM:estzN3}
and $\be_N\rightarrow 0$ (which follows from  \eqref{eq:dlN} and~\eqref{f0a}), we have 
\begin{align}
\begin{split}
P(t_j\leq \tau_N^\rho <t_{j+1})
& =  \frac{1}{\rho}\E\Big[
\|V_N\|_{X^{1-\eps}([t_j, \tau_N^\rho])}\mathbf{1}_{[t_j,t_{j+1})}(\tau_N^\rho)\Big]\\
& \leq \frac{C_0}{\rho}\E\Big[\|(V_N(t_j),\dt V_N(t_j))\|_{\H^{1-\eps}}\mathbf{1}_{[t_j,t_{j+1})}(\tau_N^\rho)\Big]+o(1),
\end{split}
\label{f5}
\end{align}

\noi
as $N \to \infty$.
When $j = 0$, we obtain \eqref{f4}
from \eqref{f5} since $(V_N(0),\dt V_N(0)) = (0, 0)$.
In general, 
by noting that 
 \[\|(V_N(t_j),\dt V_N(t_j)\|_{\H^{1-\eps}}
 \leq \|V_N\|_{X^{1-\eps}([t_{j-1}, t_j])},\] 
 
 \noi
 we apply the bound \eqref{f5} iteratively and obtain 
\begin{align*}
P(t_j\leq \tau_N^\rho <t_{j+1})
& \leq \frac{C_0}{\rho}\E\Big[
\|V_N\|_{X^{1-\eps}([t_{j-1}, t_j])}
\Big]+o(1)\\
& \leq \frac{C_0^2}{\rho}\E\Big[
\|(V_N(t_{j-1}),\dt V_N(t_{j-1})\|_{\H^{1-\eps}}
\Big]+o(1)\\
& \leq \cdots \leq 
 \frac{C_0^j}{\rho}\E\Big[
\|(V_N(0),\dt V_N(0)\|_{\H^{1-\eps}}
\Big]+o(1)\\
& \too 0
\end{align*}

\noi
as $N \to 0$
since $(V_N(0),\dt V_N(0)) = (0, 0)$.
This proves \eqref{f4}.
\end{proof}

\begin{remark}\label{REM:higher}\rm
As mentioned in Remark \ref{REM:x}
we can easily adapt the proof of Theorem \ref{THM:weak} presented above
to a general defocusing power-type nonlinearity $u^{2k+1}$, $k \in \N$, 
by using the following identity:
\[u_N^{2k+1} = \sum_{j=0}^{k}\binom{2k+1}{2j}(2j-1)!!\s_N^{j}\,:\!u_N^{2k+1-2j}\!:\]

\noi
in place of $u_N^3 = \, :\!u_N^3\!: + \,3\s_Nu_N$. 
Here, 
$(2j-1)!! = (2j-1)(2j-3)\cdots 3\cdot 1$
with the convention $(-1)!! = 1$.
In this case, the solution $u_N$ to
\[\begin{cases}(\dt^2-\Dl+\dt) u_N+ u_N^{2k+1} = \alpha_N\xi_N\\
(u_N,\dt u_N)\big|_{t=0} = (v_0,v_1)+(z_{0,N}^\o,z_{1,N}^\o)\end{cases}\]

\noi
 converges  to the  solution $w_{\kk}$ to
\[\begin{cases}(\dt^2-\Dl+\dt) w_{\kk} +\sum_{j=0}^{k}\binom{2k+1}{2j}(2j-1)!!
(\frac{\kk^2}{4\pi})^{j}w_{\kk}^{2k+1-2j} =0\\
(w_{\kk},\dt w_{\kk})\big|_{t=0} = (v_0,v_1)\end{cases}\]

\noi
as $N\rightarrow \infty$.
\end{remark}

\begin{ackno}\rm 
The authors would like to thank the anonymous referees for their helpful comments on the manuscript. 
T.O.~was supported by the European Research Council (grant no.~637995 ``ProbDynDispEq''
and grant no.~864138 ``SingStochDispDyn").  
T.R.~was supported by the European Research Council (grant no.~637995 ``ProbDynDispEq'').
M.O.~was supported by JSPS KAKENHI Grant number JP16K17624.
M.O.~would like to thank the School of Mathematics at the University of Edinburgh for its hospitality, where this manuscript was prepared.

\end{ackno}

\end{document}